\pgfplotsset{compat=1.17}
\newcommand\EE{{\mathbb E}}
\newcommand\NN{{\mathbb N}}
\newcommand\RR{{\mathbb R}}
\newcommand\ZZ{{\mathbb Z}}
\newcommand\cI{{\mathcal I}}
\newcommand\cP{{\mathcal P}}
\newcommand\cS{{\mathcal S}}
\newcommand\cT{{\mathcal T}}
\newcommand\0{{\mathbf 0}}
\newcommand\1{{\mathbf 1}}
\newcommand\SetOf[2]{\left\{#1 \mid #2\right\}}
\newcommand\smallSetOf[2]{\{#1 \mid #2\}}
\newcommand\biggSetOf[2]{\biggl\{#1 \biggm| #2\biggr\}}
\newcommand\BiggSetOf[2]{\Biggl\{#1 \Biggm| #2\Biggr\}}
\newcommand\card[1]{\# #1} 
\newcommand\transpose[1]{{#1}^{\top}}
\newcommand\invtranspose[1]{{#1}^{-\top}}
\DeclareMathOperator\conv{conv}
\DeclareMathOperator\vol{vol}
\DeclareMathOperator\Sym{Sym}
\DeclareMathOperator\argmax{arg\,max}
\DeclareMathOperator\Rev{Rev}
\newcommand\doi[1]{\href{http://dx.doi.org/#1}{\texttt{doi:#1}}}
\newtheorem{theorem}{Theorem}
\newtheorem{proposition}[theorem]{Proposition}
\newtheorem{corollary}[theorem]{Corollary}
\newtheorem{lemma}[theorem]{Lemma}
\theoremstyle{remark}
\newtheorem{remark}[theorem]{Remark}
\newtheorem{example}[theorem]{Example}
\newtheorem{question}[theorem]{Question}
\newcommand\polymake{\texttt{polymake}\xspace}
\newcommand\HC{\texttt{Homotopy\-Continuation.jl}\xspace}
\title{Generalized permutahedra and optimal auctions}
\author{Michael Joswig \and Max Klimm \and Sylvain Spitz} 
\address[Michael Joswig]{
  Technische Universität Berlin,
  Chair of Discrete Mathematics/Geometry; \\
  Max-Planck Institute for Mathematics in the Sciences, Leipzig \\
  \texttt{joswig@math.tu-berlin.de}
}
\address[Max Klimm \and Sylvain Spitz]{
  Technische Universität Berlin,
  Discrete Optimization\\
  \texttt{$\{$klimm,spitz$\}$@math.tu-berlin.de}	
}
\thanks{%
  Support by the Deutsche Forschungsgemeinschaft (DFG, German Research Foundation) under Germany's Excellence Strategy - The Berlin Mathematics Research Center MATH$^+$ (EXC-2046/1, project ID 390685689) gratefully acknowledged.
  M.~Joswig has further been supported by \enquote{Symbolic Tools in Mathematics and their Application} (TRR 195, project-ID 286237555); \enquote{Facets of Complexity} (GRK 2434, project-ID 385256563).}
\subjclass[2020]{
  91B03,  
  (52B12, 
  68W30,  
  14T15)  
}
\begin{document}

\begin{abstract}
  We study a family of convex polytopes, called SIM-bodies, which were introduced by Giannakopoulos and Koutsoupias (2018) to analyze so-called Straight-Jacket Auctions.
  First, we show that the SIM-bodies belong to the class of generalized permutahedra.
  Second, we prove an optimality result for the Straight-Jacket Auctions among certain deterministic auctions.
  Third, we employ computer algebra methods and mathematical software to explicitly determine optimal prices and revenues.
\end{abstract}

\maketitle

\section{Introduction}
The design of auctions that maximize the revenue of a seller is a central question of economic theory. 
In the basic model, there is a single item and a set of buyers interested in obtaining the item from the seller.
Every buyer individually holds a private value, which is the maximal price that they are willing to spend in order to receive the item.
The exact valuation of a buyer is unknown to both the seller and the other buyers, but it is drawn from a probability distribution that is common knowledge.
In this setting, a \emph{mechanism} (or auction) is a procedure that governs how the seller elicits information from the buyers regarding their private valuations and decides to whom to sell the item at which price.
The seller seeks to maximize their expected revenue, i.e., the price received from the successful buyer.
The mathematical properties of auctions in these settings are at the heart of two Nobel Memorial Prizes in Economics. Vickrey~\cite{Vickrey1961} received the price in 1996 for showing that many popular auction formats give the same revenue to the seller, and Myerson~\cite{MR0618964} received the prize in 2007 for a precise characterization of the revenue-maximizing auction. 

While auctions for selling a single item are reasonably well understood, the case where multiple items are for sale is much more challenging due to the underlying combinatorics of the problem.
The main difficulty is to decide which subset of items (or randomizations thereof) to sell to the buyer.
It has been observed that even in the simplest case of a single buyer whose valuation for the items is additive, the optimal auction may be randomized \cite{DaskalakisDT14,Thanassoulis04} and may even require randomization among an uncountable set of allocations~\cite{DaskalakisDT17}.
Optimal auctions are also challenging from a computational point of view since computing the optimal auction is $\mathsf{NP}$-hard for distributions of support three \cite{ChenDPSY18} and $\mathsf{\#P}$-hard in general \cite{DaskalakisDT14}. There are two main ways to deal with these complexities.
The first one is to study suboptimal auctions; this approach avoids the combinatorial complexity by selling only all items or only a single item; see, e.g., \cite{BabaioffILW20,HartN17,LiY13}.
The second one considers specific distributions that allow to study the combinatorial structure of the problem.
Giannakopoulos and Koutsoupias \cite{GK+duality:2018} examined the case where the valuation for each item is drawn from the uniform distribution and devise a deterministic auction, called \emph{Straight-Jacket Auction (SJA)}. 
As their main result, Giannakopoulos and Koutsoupias showed that SJA is optimal for up to to six items \cite[Theorem 4.8]{GK+duality:2018}.

Giannakopoulos and Koutsoupias recognized that crucial properties of SJA are expressed in terms of certain convex polytopes and their volumes; in \cite{GK+duality:2018} these polytopes were called \emph{SIM-bodies}; yet they previously occurred as \enquote{$Q$-polytopes} in work of Doker~\cite[\S2.4]{Doker:phd}.
The SIM-bodies form our point of departure, and we derive crucial structural information.
Most notably, these turn out to be generalized permutahedra; this is our Theorem~\ref{thm:generalized}.
Generalized permutahedra were introduced by Postnikov \cite{Postnikov:2009}, and they received considerable attention in the algebraic combinatorics community and beyond.
Moreover, generalized permutahedra are most tightly related to submodular functions, polymatroids and $M$-convexity studied in optimization; see Frank and Murota \cite{FrankMurota:1808.07600} for a survey.
For instance, generalized permutahedra already occur as \enquote{cores of cooperative games} in work of Danilov and Koshevoy \cite[Proposition 5]{DanilovKoshevoy:2000}.
As our main theoretical contribution, in Theorem \ref{thm:uniqueCritPrice}, we prove a general uniqueness result for submodular and deterministic auctions.
This leads us to proving the optimality of SJA for any number of items, but for a restricted choice of auctions, and subject to a technical condition (Corollary~\ref{cor:SJA-optimal}).
The existence of such auctions is far from obvious; in fact, the article \cite{GK+duality:2018} left open if SJA exists for $n\geq 7$ items.
A SIM-body arises as that portion of the domain of the utility of the buyer where no bundle is sold.
Equivalently, SIM-bodies can be described as regions of tropical hypersurfaces induced by the utilities.
In this sense our present work is in line with recent efforts to employ methods from tropical geometry to topics in mechanism design \cite{BaldwinKlemperer,TranYu:2019,CrowellTran:1606.04880,ETC}.

The rest of our paper is devoted to actually computing the SJA-prices and their revenues.
Our method builds on a new parametric volume formula for SIM-bodies, Proposition~\ref{prop:SIM-lawrence}, which we derive from a general algorithm of Lawrence \cite{Lawrence:1991}.
We exploit that this method for computing the volume of a convex polytope is particularly nice for (rational) polytopes whose normal fans are smooth in the sense of toric geometry.
An $n$-dimensional SIM-body depends on $n$ parameters, which are real numbers corresponding to the prices of the $n$ items in the auction;.
The normalized volume is an integral homogeneous polynomial of degree $n$ in these $n$ parameters.
Computing the SJA-prices then amounts to finding real roots of a system of $n$ polynomials equations in $n$ indeterminates and testing submodularity conditions (which yield linear inequalities).
In this way, we establish the existence of SJA for $n\leq 12$ items by computations using \polymake \cite{DMV:polymake} and \HC \cite{HC}.
Furthermore, it turns out that a certain linear substitution of the volume polynomials of the SIM-bodies gives Lorentzian polynomials \cite{Lorentzian}.

\subsection*{Acknowledgments}
We are grateful to Katharina Jochemko for pointing out \cite{DanilovKoshevoy:2000}.


\section{SIM-bodies}
\label{sec:sim}
Our approach to studying auctions is primarily geometric and algorithmic.
To this end it is useful to begin with certain classes of (convex) polytopes.
We refer to \cite{Ziegler:Lectures+on+polytopes} and \cite{Polyhedral+and+Algebraic+Methods} for the basics of polyhedral geometry and related algorithms.

The \emph{permutahedron} $\cP_{n+1}(\alpha_1,\dots,\alpha_{n+1})$ is the convex hull of all $(n+1)!$ points which arise from coordinate permutations of $(\alpha_1,\dots,\alpha_{n+1})$, and the parameters $\alpha_i$ are arbitrary real numbers.
By construction the symmetric group $\Sym(n+1)$ of degree $n+1$ operates on $\cP_{n+1}(\alpha_1,\dots,\alpha_{n+1})$ by linear automorphisms.
Examples include the \emph{regular permutahedron} $\cP_{n+1}(n,n-1,\dots,0)$ and the \emph{hypersimplex} $\Delta(k,n)=\cP_n(1,\dots,1,0,\dots,0)$, with $k$ ones and $n-k$ zeros.
The following is an $H$-description of permutahedra, which is known; see \cite[Proposition 2.5]{Postnikov:2009}, where that result is attributed to Rado~\cite{Rado:1952}. 
\begin{lemma}\label{lem:permutahedron}
  For $\alpha_1 \geq \cdots \geq \alpha_{n+1}$ we have
  \[
    \begin{split}
      \cP_{n+1}(&\alpha_1,\dots,\alpha_{n+1}) \\
      \ &= \ \biggSetOf{ x\in\RR^{n+1} }{ \sum_{i=1}^{n+1} x_i = \bar\alpha \,,\ \sum_{i\in I} x_i \leq \alpha_{1}+\dots + \alpha_{k} \text{ for } I\in\tbinom{[n+1]}{k} \,,\ k\geq 1 } \enspace ,\\
    \end{split}
  \]
  where $\bar\alpha := \sum\alpha_i$.
\end{lemma}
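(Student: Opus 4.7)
The plan is to verify the two inclusions separately, writing $P$ for the polyhedron on the right-hand side of the claimed identity.

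The direction $\cP_{n+1}(\alpha_1,\ldots,\alpha_{n+1}) \subseteq P$ is straightforward: any permutation $(\alpha_{\sigma(1)},\ldots,\alpha_{\sigma(n+1)})$ of $(\alpha_1,\ldots,\alpha_{n+1})$ has coordinate sum $\bar\alpha$, and for any $I \in \tbinom{[n+1]}{k}$ the sum $\sum_{i \in I}\alpha_{\sigma(i)}$ picks out $k$ of the $\alpha_j$, hence is bounded above by $\alpha_1 + \cdots + \alpha_k$ because $\alpha_1 \geq \cdots \geq \alpha_{n+1}$. Since $P$ is cut out by linear constraints, convex combinations of permutations still lie in $P$.

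For the opposite inclusion, the plan is to recognize $P$ as the base polytope of the submodular set function $f(I) := \alpha_1 + \cdots + \alpha_{|I|}$ and invoke the classical description of its vertices. Submodularity of $f$ reduces to the termwise estimate $\alpha_{a+1} + \cdots + \alpha_b \geq \alpha_{c+1} + \cdots + \alpha_d$ valid whenever $a \leq b \leq c \leq d$ and $b-a = d-c$, immediate from the sorting hypothesis. For any $x \in P$ the family $\cT(x) := \{I \subseteq [n+1] : \sum_{i \in I} x_i = f(I)\}$ of tight sets is then closed under unions and intersections, since the sandwich $f(I) + f(J) = \sum_{i \in I} x_i + \sum_{i \in J} x_i = \sum_{i \in I \cap J} x_i + \sum_{i \in I \cup J} x_i \leq f(I \cap J) + f(I \cup J) \leq f(I) + f(J)$ forces equality throughout.

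It remains to show that every vertex $v$ of $P$ is a permutation of $(\alpha_1,\ldots,\alpha_{n+1})$; boundedness of $P$ via $x_i \leq \alpha_1$ and $x_i \geq \bar\alpha - (\alpha_1 + \cdots + \alpha_n) = \alpha_{n+1}$ then implies that $P$ equals the convex hull of its vertices. For a vertex $v$ the indicator vectors $\{\1_I : I \in \cT(v)\}$ must span $\RR^{n+1}$, and because $\cT(v)$ is a distributive lattice of subsets of $[n+1]$, this full-rank condition yields a chain $\emptyset \subsetneq I_1 \subsetneq \cdots \subsetneq I_n \subsetneq [n+1]$ of tight sets with $|I_k| = k$. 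Reading off $v$ along this chain gives $v_i = f(I_k) - f(I_{k-1}) = \alpha_k$ for $i \in I_k \setminus I_{k-1}$ and $v_i = \alpha_{n+1}$ for the unique $i \notin I_n$, exhibiting $v$ as the desired permutation. The main obstacle is precisely this last step: translating the linear-independence of active constraints at a vertex into the combinatorial statement that $\cT(v)$ carries a maximal chain with unit increments is where submodularity, and hence the sorting hypothesis, enters decisively.
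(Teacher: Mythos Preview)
Your argument is correct. Note, however, that the paper does not actually prove this lemma: it is stated as a known result, with a reference to Postnikov (who in turn attributes it to Rado). So there is no proof in the paper to compare against.

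Your route---recognizing the right-hand side as the base polytope of the submodular set function $f(I)=\alpha_1+\cdots+\alpha_{|I|}$ and reading off the vertices from maximal chains in the tight-set lattice---is the standard polymatroid argument, essentially Edmonds' greedy characterization of base polytopes. The one step you compress is the passage from ``the indicators $\{\1_I : I\in\cT(v)\}$ span $\RR^{n+1}$'' to ``$\cT(v)$ contains a complete flag''. You rightly flag this as the crux; for completeness, the missing line is: if some maximal chain in the lattice $\cT(v)$ has a jump $I_{k-1}\subsetneq I_k$ with two distinct elements $a,b\in I_k\setminus I_{k-1}$, then no tight set can separate $a$ from $b$ (any $J\in\cT(v)$ with $a\in J$, $b\notin J$ would make $(J\cap I_k)\cup I_{k-1}$ a tight set strictly between $I_{k-1}$ and $I_k$, contradicting maximality of the chain), so all indicator vectors lie in the hyperplane $\{x:x_a=x_b\}$, contradicting full rank. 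With that sentence added, the proof is complete.

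For comparison, Rado's original 1952 argument goes through majorization and the Hardy--Littlewood--P\'olya characterization via doubly stochastic matrices. Your submodular approach is closer to Edmonds and is arguably the more natural one here, given that submodularity is exactly the structure the paper exploits downstream.
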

Throughout we abbreviate $[n]=\{1,2,\dots,n\}$, and we write $\tbinom{[n]}{k}$ for the set of $k$-element subsets of $[n]$.
A key construction comes from considering a map $z$ that assigns each nonempty subset $I \subseteq [n+1]$ a real number~$z_I$.
Following Postnikov~\cite[\S6]{Postnikov:2009} this gives rise to the polyhedron
\begin{equation}
  \label{eq:generalized}
  \cP_{n+1}\{z\} \ = \ \biggSetOf{ x\in\RR^{n+1} }{ \sum_{i=1}^{n+1} x_i = z_{[n+1]} \,,\ \sum_{i\in I} x_i \leq z_{[n+1]}-z_{I^c} \text{ for } \emptyset\neq I\subseteq[n+1] } \enspace ,
\end{equation}
where $I^c = [n+1]\setminus I$ is the complement of $I$ in $[n+1]$.
\begin{example}
  For $z_I=\alpha_{n+2-k}+\dots + \alpha_{n+1}$ with $k=\card{I}$ we recover the permutahedron $\cP_{n+1}(\alpha_1,\dots,\alpha_{n+1})=\cP_{n+1}\{z\}$.
\end{example}
Now we pick a fixed weakly descending sequence of $n$ nonnegative real numbers $\alpha_1 \geq \cdots \geq \alpha_n \geq 0$.
Specializing \eqref{eq:generalized} by letting
\begin{equation} \label{eq:z_I}
  z_I \ = \ \begin{cases}
    \sum_{i=1}^{\card{I} - 1} \alpha_{n+1-i} & \text{ if } n+1 \in I \\
    0 & \text{ otherwise} \enspace,
  \end{cases}
\end{equation}
we obtain the polyhedron $\Gamma(\alpha_1,\dots,\alpha_n):=\cP_{n+1}\{z\}$.
Observe that $z_{[n+1]}=\alpha_{1}+\dots + \alpha_{n}=\bar\alpha$.
Moreover, we define $\Lambda(\alpha_1,\dots,\alpha_n)$ as the orthogonal projection of $\Gamma(\alpha_1,\dots,\alpha_n)$ gotten by omitting the last coordinate.
As the polyhedron $\Gamma(\alpha_1,\dots,\alpha_n)$ is contained in the affine subspace $\sum_{i=1}^{n+1} x_i = \bar\alpha$, the orthogonal projection $\Lambda(\alpha_1,\dots,\alpha_n)$ is a polyhedron in $\RR^n$ which is affinely isomorphic.

\begin{proposition}\label{prop:projection}
  For $\alpha_1 \geq \cdots \geq \alpha_n \geq 0$ we have
  \[
    \Lambda(\alpha_1,\dots,\alpha_n) \ = \ \biggSetOf{ x \in \RR_{\geq 0}^n }{  \sum_{i \in I} x_i \leq \alpha_{1}+\dots + \alpha_k \text{ for } I\in\tbinom{[n]}{k} \text{ and } k\geq 1 } \enspace ,
  \]
\end{proposition}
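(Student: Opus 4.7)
\smallskip
\noindent\textbf{Proof plan.} My plan is to start from the $H$-description of $\Gamma(\alpha_1,\dots,\alpha_n) = \cP_{n+1}\{z\}$ in \eqref{eq:generalized}, substitute the specialization \eqref{eq:z_I} of $z$, and simplify the right-hand sides of the defining inequalities by splitting into the two cases $n+1 \notin I$ and $n+1 \in I$. Since $\Gamma$ lies in the hyperplane $\sum_{i=1}^{n+1} x_i = \bar\alpha$, the orthogonal projection $\Lambda$ is obtained by eliminating $x_{n+1}$ via this equation; so I have to check that, after substituting $x_{n+1} = \bar\alpha - \sum_{i=1}^n x_i$, the resulting system of inequalities on $x_1,\dots,x_n$ coincides with the one in the statement.

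First I compute $z_{I^c}$ in the two cases. If $n+1 \notin I$, then $n+1 \in I^c$ and $|I^c| = n+1-|I|$, so
\[
  z_{I^c} \ = \ \sum_{i=1}^{n-|I|} \alpha_{n+1-i} \ = \ \alpha_{|I|+1}+\dots+\alpha_n \enspace ,
\]
hence $z_{[n+1]} - z_{I^c} = \alpha_1+\dots+\alpha_{|I|}$. These inequalities, ranging over nonempty $I \subseteq [n]$, are precisely the inequalities $\sum_{i \in I} x_i \leq \alpha_1 + \dots + \alpha_k$ claimed in the proposition. If $n+1 \in I$, then $n+1 \notin I^c$ so $z_{I^c} = 0$, and the inequality reduces to $\sum_{i \in I} x_i \leq \bar\alpha$, which (using the equation) is equivalent to $\sum_{j \in I^c} x_j \geq 0$ with $I^c \subseteq [n]$.

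Next I observe that the family of inequalities $\sum_{j \in J} x_j \geq 0$ for $\emptyset \neq J \subseteq [n]$ is generated by the singleton cases $x_j \geq 0$ (for $j \in [n]$), which arise specifically from the choice $I = [n+1]\setminus\{j\}$. Thus, in the projected system, the inequalities from the case $n+1 \in I$ collapse to the nonnegativity constraints $x_1,\dots,x_n \geq 0$. Finally, the inequality $\sum_{i=1}^n x_i \leq \bar\alpha$, which is exactly the statement $x_{n+1} \geq 0$ guaranteeing that the projection is surjective, already appears as the $k=n$ case $I = [n]$ within the first family.

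I do not expect a serious obstacle here: the proof is really a two-case bookkeeping exercise once the substitution \eqref{eq:z_I} is unwound. The only mild subtlety is to verify that the seemingly extra inequalities $\sum_{j \in I^c} x_j \geq 0$ for $|I^c| \geq 2$ are redundant given individual nonnegativity, and symmetrically that the inequality $x_{n+1} \geq 0$ needed to describe the image of the projection is already encoded among the kept inequalities. Combining these observations yields the claimed equality.
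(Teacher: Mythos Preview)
Your proof is correct and follows essentially the same approach as the paper's. The paper phrases it as ``one Fourier--Motzkin step'' yielding $z_I \leq \sum_{i\in I} x_i \leq \bar\alpha - z_{I^c}$ for $\emptyset\neq I\subseteq[n]$ and then observes that this gives the claim; your argument simply makes that step explicit by substituting $x_{n+1}=\bar\alpha-\sum_{i=1}^n x_i$, splitting the inequalities according to whether $n+1\in I$, and checking that the case $n+1\in I$ collapses to the nonnegativity constraints.
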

\begin{proof}
  Let $\pi:\RR^{n+1}\to\RR^n$ be the projection which omits the last coordinate, and let $z_I$ be defined for nonempty $I\subseteq[n+1]$ as in \eqref{eq:z_I}.
  Applying one Fourier--Motzkin step yields
  \[
    \pi \bigl( P_{n+1}\{z_I\} \bigr) \ = \ \biggSetOf{ x \in \RR^n }{ z_I \leq \sum_{i \in I} x_i \leq \bar\alpha - z_{I^c} \text{ for } \emptyset\neq I\subseteq[n] } \enspace ,
  \]
  where $I^c = [n+1]\setminus I$.
  This gives the claim.
\end{proof}
Proposition~\ref{prop:projection} says that $\Lambda(\alpha_1,\dots,\alpha_n)$ is a \emph{SIM-body} in the sense of Giannakopoulos and Koutsoupias \cite[Definition~4.6]{GK+duality:2018}.
However, note that our parameters $\alpha_i$ are descending, whereas they are chosen ascending in \cite{GK+duality:2018}.
This deviation is suggested by aligning with Postnikov's notation \cite{Postnikov:2009}.
The following is a $V$-description of the SIM-bodies.

\begin{figure}[th]\label{fig:SIM-bodies}
	\begin{minipage}{.45\textwidth}
	  \begin{tikzpicture}[scale=1.5]
  		\tikzstyle{face}=[gray!40, fill opacity = .5];
  		\tikzstyle{axis}=[thin,gray,-stealth];
  		
	    \draw[axis] (0,0) -- (3,0) node[right] {$x_1$};
	    \draw[axis] (0,0) -- (0,3) node[above] {$x_2$};
	    \draw (0,0)--(2,0)--(2,1)--(1,2)--(0,2)--cycle;
			\fill[face] (0,0)--(2,0)--(2,1)--(1,2)--(0,2)--cycle;
	    
	    \draw[fill=gray] (0,0) circle (1pt);
	    \draw[fill=gray] (2,0) circle (1pt) node[below] {$\alpha_1$};
	    \draw[fill=gray] (0,2) circle (1pt) node[left] {$\alpha_1$};
	    \draw[fill=gray] (2,1) circle (1pt) node[right] {$(\alpha_1, \alpha_2)$};
	    \draw[fill=gray] (1,2) circle (1pt) node[above right] {$(\alpha_2, \alpha_1)$};
	    
	  \end{tikzpicture}
	\end{minipage}
	\begin{minipage}{.45\textwidth}
		\begin{tikzpicture}[scale=1]
  		\tikzstyle{face}=[gray!40, fill opacity = .5];
  		\tikzstyle{axis}=[thin,gray,-stealth];
  		
			\coordinate (O) at (0,0,0);			
			\coordinate (A1) at (3,0,0);
			\coordinate (A2) at (0,3,0);
			\coordinate (A3) at (0,0,3);
			\coordinate (B12) at (3,2,0);
			\coordinate (B21) at (2,3,0);
			\coordinate (B13) at (3,0,2);
			\coordinate (B31) at (2,0,3);
			\coordinate (B23) at (0,3,2);
			\coordinate (B32) at (0,2,3);
			\coordinate (C12) at (3,2,1);
			\coordinate (C21) at (2,3,1);
			\coordinate (C13) at (3,1,2);
			\coordinate (C31) at (2,1,3);
			\coordinate (C23) at (1,3,2);
			\coordinate (C32) at (1,2,3);
			
			\draw[axis] (O)--(4,0,0) node[right] {$x_1$};
			\draw[axis] (O)--(0,4,0) node[above] {$x_2$};
			\draw[axis] (O)--(0,0,4) node[below] {$x_3$};
			\draw (O)--(A1)--(B12)--(C12)--(C13)--(B13)--(A1);
			\draw (O)--(A2)--(B21)--(C21)--(C23)--(B23)--(A2);
			\draw (O)--(A3)--(B32)--(C32)--(C31)--(B31)--(A3);
			\draw (B12)--(B21);
			\draw (B13)--(B31);
			\draw (B23)--(B32);
			\draw (C12)--(C21);
			\draw (C13)--(C31);
			\draw (C23)--(C32);

      \draw[fill=gray] (O) circle (1.5pt);			
			
			\fill[face] (A1)--(B12)--(C12)--(C13)--(B13)--cycle;
			\fill[face] (A2)--(B21)--(C21)--(C23)--(B23)--cycle;
			\fill[face] (A3)--(B32)--(C32)--(C31)--(B31)--cycle;
			\fill[face] (B12)--(B21)--(C21)--(C12)--cycle;
			\fill[face] (B13)--(B31)--(C31)--(C13)--cycle;
			\fill[face] (B32)--(B23)--(C23)--(C32)--cycle;
			\fill[face] (C12)--(C21)--(C23)--(C32)--(C31)--(C13)--cycle;
			
			\node at (A1) [below=3pt] {$\alpha_1$};
			\node at (A2) [left=3pt] {$\alpha_1$};
			\node at (A3) [left=3pt] {$\alpha_1$};
			\node at (C12) [left=3pt] {$v$};
			
			\foreach \v in {A1,A2,A3,B12,B21,B13,B31,B23,B32,C12,C21,C13,C31,C23,C32} \draw[fill=gray] (\v) circle (1.5pt);
		\end{tikzpicture}
  \end{minipage}
  \caption{SIM-bodies $\Lambda(\alpha_1, \alpha_2)$ and $\Lambda(\alpha_1, \alpha_2, \alpha_3)$.
    The vertex labeled $v$ in the right picture has the coordinates $(\alpha_1, \alpha_2, \alpha_3)$. 
    Note that the faces of the SIM-body, which are normal to the coordinate directions, are also SIM-bodies.
    This property of SIM-bodies and more were shown in \cite[Lemma 6.1]{GK+duality:2018}.} 
\end{figure}
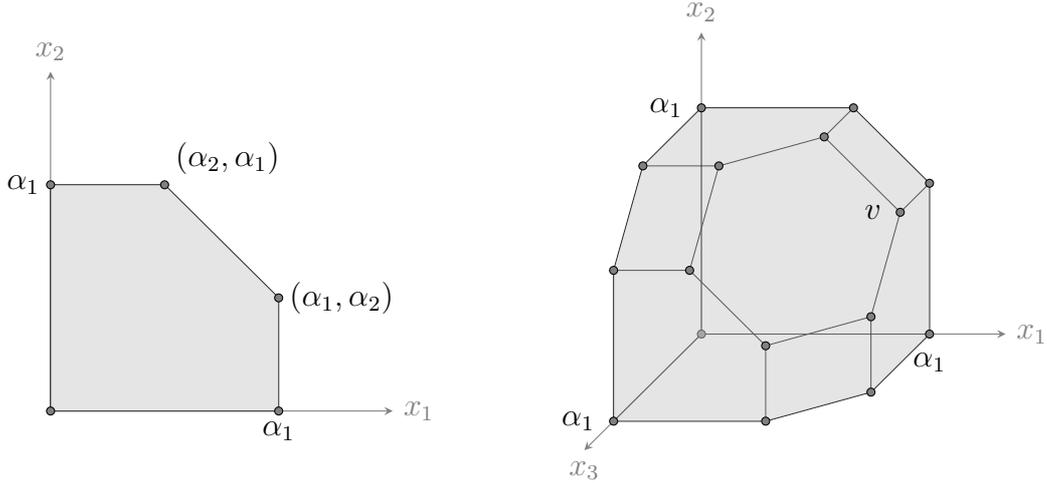
\begin{proposition}\label{prop:SIM-vertices}
  For $\alpha_1 \geq \cdots \geq \alpha_n \geq 0$ we have
  \[
    \Lambda(\alpha_1,\dots,\alpha_n) \ = \ \conv \left\{ \bigcup_{k=0}^n \cP_n (\alpha_1,\dots,\alpha_k,0,\dots,0) \right\} \enspace .
  \]
  Moreover, if $\alpha_1 > \cdots > \alpha_n > 0$, those points form the pairwise distinct vertices.
\end{proposition}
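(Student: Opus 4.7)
The plan is to establish the two set-theoretic inclusions separately, and then verify the distinctness claim by counting.

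For the inclusion $\supseteq$, I would apply Lemma~\ref{lem:permutahedron} to the weakly descending sequence $(\alpha_1,\dots,\alpha_k,0,\dots,0) \in \RR^n$. Every point $x \in \cP_n(\alpha_1,\dots,\alpha_k,0,\dots,0)$ has nonnegative coordinates and satisfies $\sum_{i\in I}x_i \leq \alpha_1+\dots+\alpha_{\min(\ell,k)} \leq \alpha_1+\dots+\alpha_\ell$ for every $I \in \binom{[n]}{\ell}$, matching exactly the inequalities from Proposition~\ref{prop:projection}. Hence each such permutahedron, and consequently the convex hull of their union, lies in $\Lambda(\alpha_1,\dots,\alpha_n)$.

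For the inclusion $\subseteq$, it suffices to show that every vertex of $\Lambda(\alpha_1,\dots,\alpha_n)$ is a coordinate permutation of $(\alpha_1,\dots,\alpha_k,0,\dots,0)$ for some $k \in \{0,\dots,n\}$. I would identify $\Lambda(\alpha_1,\dots,\alpha_n)$ with the independence polytope of the polymatroid whose rank function $f(S) := \alpha_1+\dots+\alpha_{|S|}$ is submodular and nondecreasing, which holds because the $\alpha_i$ are nonnegative and weakly decreasing. Every vertex $v$ is the unique maximizer of some generic linear functional $x \mapsto \langle c, x\rangle$; after relabeling we may assume $c_{\sigma(1)} > \dots > c_{\sigma(k)} > 0 > c_{\sigma(k+1)} > \dots > c_{\sigma(n)}$ for some $k$. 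The polymatroid greedy algorithm then returns $v_{\sigma(i)} = \alpha_i$ for $i \leq k$ and $v_{\sigma(i)} = 0$ for $i > k$, which is precisely a coordinate permutation of $(\alpha_1,\dots,\alpha_k,0,\dots,0)$.

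For the distinctness claim under $\alpha_1 > \dots > \alpha_n > 0$, within a fixed $k$ the coordinate permutations of $(\alpha_1,\dots,\alpha_k,0,\dots,0)$ yield exactly $n!/(n-k)!$ pairwise distinct points, while points from different values of $k$ are separated by the coordinate sum $\sum_i v_i = \alpha_1+\dots+\alpha_k$, which is strictly increasing in $k$. Each of these points is in fact a vertex, as it arises as the unique greedy maximizer of a suitable generic $c$ constructed as above. The main obstacle is the greedy characterization of vertices; if one prefers to avoid invoking polymatroid theory, it can be proved ad hoc by using the rearrangement inequality to force $v_{\sigma(1)} \geq \dots \geq v_{\sigma(n)}$ in any maximizer, and then observing that the upper-bound constraints $\sum_{i\leq j} v_{\sigma(i)} \leq \alpha_1 + \dots + \alpha_j$ together with nonnegativity pin down the unique optimum.
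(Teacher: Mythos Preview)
Your proof is correct, and it takes a genuinely different route from the paper's. The paper's argument is very brief: it checks feasibility of the candidate points by direct inspection, then (assuming strict inequalities) asserts that $(\alpha_1,\dots,\alpha_k,0,\dots,0)$ is the \emph{unique} maximizer of the specific objective $(\alpha_1,\dots,\alpha_k,-1,\dots,-1)$ over $\Lambda$, which certifies it as a vertex; the degenerate case is then dispatched by continuity. Your approach instead recognizes $\Lambda$ as the independence polytope of the polymatroid with rank function $f(S)=\alpha_1+\dots+\alpha_{|S|}$ and invokes Edmonds' greedy characterization of its vertices.

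What this buys you: the paper's argument, as written, establishes that each candidate point is \emph{a} vertex but does not spell out why these are \emph{all} the vertices of $\Lambda$ --- i.e., the inclusion $\subseteq$ is left implicit. Your greedy argument handles this cleanly, since it shows that \emph{every} generic linear functional is maximized at one of the candidates, so no other vertices can exist. Your alternative sketch via the rearrangement inequality is essentially the elementary version of the same idea and is closer in spirit to what the paper does, but applied uniformly over all objectives rather than to one specific $c$ per vertex. Conversely, the paper's approach is more self-contained (no appeal to polymatroid theory) and makes the passage to the degenerate case $\alpha_i=\alpha_{i+1}$ transparent via continuity, whereas your argument treats all parameter choices uniformly from the outset.
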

\begin{proof}
  A direct inspection shows that the points $(\alpha_1,\dots,\alpha_k,0,\dots,0)$ and their permutations are feasible.
  Assume $\alpha_1 > \cdots > \alpha_n > 0$.
  Then  $(\alpha_1,\dots,\alpha_k,0,\dots,0)$ is a vertex of $\Lambda(\alpha_1,\dots,\alpha_n)$ because it is the only feasible point that maximizes the linear objective function $(\alpha_1,\dots,\alpha_k,-1,\dots,-1)$.
  If the parameters $\alpha_i$ are not pairwise distinct, the claim follows from the generic case by continuity.
\end{proof}
\begin{example}
  For instance, for $\alpha_1 > \alpha_2 > 0$ the $2$-dimensional SIM-body
  \[
    \Lambda(\alpha_1,\alpha_2) \ = \ \conv \bigl\{ (0,0),\, (\alpha_1, 0),\, (0, \alpha_1),\, (\alpha_1,\alpha_2),\, (\alpha_2, \alpha_1) \bigr\}
  \]
  is a convex pentagon, which resembles a SIM-card used to authenticate subscribers on mobile phones; hence the name.
  In the telecommunication context the term \enquote{SIM} is short for \enquote{subscriber identity module}.
\end{example}

There are several immediate consequences of Proposition~\ref{prop:SIM-vertices}.
Recall that \emph{normally equivalent} polytopes, by definition, share the same normal fan.
In particular, such polytopes are affinely (and thus combinatorially) isomorphic.
\begin{corollary}\label{cor:SIM-combinatorics}
  Let $\alpha_1 \geq \cdots \geq \alpha_n \geq 0$.
  \begin{enumerate}[(a)]
  \item The SIM-body $\Lambda(\alpha_1,\dots,\alpha_n)$ is bounded; i.e., it is a polytope;
  \item that polytope is integral if and only if all parameters $\alpha_i$ are integers;
  \item we have $\dim\Lambda(\alpha_1,\dots,\alpha_n)=n$ if and only if $\alpha_1>0$;
  \item\label{it:SIM-combinatorics:equivalent} for pairwise distinct parameters $\alpha_i$ the SIM-body $\Lambda(\alpha_1,\dots,\alpha_n)$ is normally equivalent to $\Lambda(n,n-1,\dots,1)$;
  \item\label{it:SIM-combinatorics:orbits} the linear automorphism group of $\Lambda(n,n-1,\dots,1)$ is the symmetric group $\Sym(n)$, and it has precisely $n+1$ vertex orbits;
  \item\label{it:SIM-combinatorics:n-vertices} the number of vertices of $\Lambda(n,n-1,\dots,1)$ equals $\sum_{k=0}^n n!/k!$.
  \end{enumerate}
\end{corollary}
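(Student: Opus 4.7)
The plan is to attack the six items in turn, leveraging the $H$-description in Proposition~\ref{prop:projection} and the $V$-description in Proposition~\ref{prop:SIM-vertices}. Parts (a)--(c) are essentially direct, part (d) requires a combinatorial-type argument on an open cone of parameters, and part (e) contains the main conceptual work, from which part (f) follows by counting.

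For (a), specializing $I=\{i\}$ in Proposition~\ref{prop:projection} gives $0 \leq x_i \leq \alpha_1$, so $\Lambda(\alpha_1,\dots,\alpha_n) \subseteq [0,\alpha_1]^n$ is bounded. For (b), one direction is immediate from Proposition~\ref{prop:SIM-vertices}: integer $\alpha_i$ make all listed vertices integer. For the converse, I would verify from the $H$-description that each point $(\alpha_1,\dots,\alpha_k,0,\dots,0)$ is actually a vertex by exhibiting $n$ linearly independent tight constraints, namely $x_1=\alpha_1$, $x_1+x_2=\alpha_1+\alpha_2$, \dots, $x_1+\dots+x_k=\alpha_1+\dots+\alpha_k$, together with $x_{k+1}=\dots=x_n=0$; integrality of $\Lambda$ then forces $\alpha_k \in \ZZ$ by induction on $k$. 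For (c), the monotonicity assumption shows $\alpha_1=0$ forces $\Lambda=\{0\}$, while $\alpha_1>0$ yields $n+1$ affinely independent points $0,\alpha_1 e_1,\dots,\alpha_1 e_n$ inside $\Lambda$.

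For (d), I would argue that the combinatorial type of $\Lambda(\alpha_1,\dots,\alpha_n)$ is constant on the open parameter cone $\{\alpha_1 > \dots > \alpha_n > 0\}$. The tight set identified in (b) for each vertex depends only on $k$ and on the choice of positions of the nonzero coordinates, but never on the numerical values of the $\alpha_i$; consequently the vertex-facet incidence poset is constant on this cone. Since every facet hyperplane of the $H$-description has a normal direction independent of $\alpha$, identifying which inequalities are facet-defining suffices to pin down the normal fan, and this fan coincides with that of the specialization $\Lambda(n,n-1,\dots,1)$.

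For (e), $\Sym(n)$ visibly acts on $\Lambda(n,n-1,\dots,1)$ by permuting coordinates, and the $n+1$ vertex orbits correspond precisely to the parameter $k \in \{0,1,\dots,n\}$ from Proposition~\ref{prop:SIM-vertices}. The delicate step, and the main obstacle I anticipate, is to rule out any further linear symmetries. I plan to argue that any linear automorphism fixes $0$ (as $A(0)=0$ for linear $A$, and $0$ is a vertex), and must therefore permute the $n$ edges emanating from the origin. From the $V$-description these edges lie along the positive coordinate rays $\RR_{\geq 0} e_i$, so $A$ sends $e_i$ to some $\lambda_i e_{\pi(i)}$; matching the endpoints $\alpha_1 e_i$ forces $\lambda_i=1$ and $\pi \in \Sym(n)$. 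Finally, (f) is the counting corollary: permutations of $(\alpha_1,\dots,\alpha_k,0,\dots,0)$ with distinct nonzero entries contribute $n!/(n-k)!$ vertices at level $k$, and the substitution $j=n-k$ rewrites the total as $\sum_{k=0}^n n!/k!$.
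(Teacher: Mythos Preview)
Your proposal is correct and follows precisely the line the paper intends; the paper in fact states Corollary~\ref{cor:SIM-combinatorics} without proof, as immediate from Proposition~\ref{prop:SIM-vertices}, so your treatment is already more detailed than the original. One point deserves tightening in part~(d): to conclude that the normal fan is constant on the open cone $\{\alpha_1>\dots>\alpha_n>0\}$, you need that the \emph{entire} collection of constraints tight at each vertex, not merely the $n$ linearly independent ones you exhibit in~(b), is independent of the particular~$\alpha$. This holds, and you can argue it cheaply: for a fixed vertex $v(\alpha)=\sigma(\alpha_1,\dots,\alpha_k,0,\dots,0)$ and a fixed constraint $\langle a,x\rangle\le b(\alpha)$ from Proposition~\ref{prop:projection}, the defect $\langle a,v(\alpha)\rangle-b(\alpha)$ is linear in~$\alpha$ and nonpositive on the open cone, hence vanishes either everywhere or nowhere on that cone. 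Alternatively, the explicit active-set computation in the paragraph following Corollary~\ref{cor:SIM-combinatorics} does exactly this. Either way your conclusion stands, and the remaining parts~(a)--(c), (e), (f) are clean as written.
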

For $\alpha_1 > \cdots > \alpha_n > 0$ we call $\Lambda(\alpha_1,\dots,\alpha_n)$ a \emph{proper} SIM-body of \emph{(permutation) degree}~$n$.
By Corollary~\ref{cor:SIM-combinatorics}\eqref{it:SIM-combinatorics:equivalent}, there is only one combinatorial type of proper SIM-bodies for each degree.
That is, for combinatorial purposes it suffices to study the \emph{regular} SIM-body $\Lambda_n:=\Lambda(n,n-1,\dots,1)$ of rank $n$.
The sequence $a(n):=\sum_{k=0}^n n!/k!$ from Corollary~\ref{cor:SIM-combinatorics}\eqref{it:SIM-combinatorics:n-vertices}, which gives the number of vertices of $\Lambda_n$, occurs as A000522 in the On-Line Encyclopedia of Integer Sequences \cite{OEIS}.
The regular SIM-body $\Lambda_n$ occurs as \enquote{$Q_n$} in the work of Doker \cite[\S2.4]{Doker:phd}; general SIM-bodies are special cases of \emph{$Q$-polytopes} in~\cite{Doker:phd}.

\begin{example}\label{exmp:cube-simplex}
  For the most part we will focus on proper SIM-bodies, but there are some interesting special cases which are not proper.
  For instance, $\Lambda(1,1,\dots,1)=[0,1]^n$ is the unit cube, and $\Lambda(1,0,\dots,0)=\conv\{0,e_1,e_2,\dots,e_n\}$ is the standard simplex.
\end{example}

Next we will show that the $H$-description of $\Lambda(\alpha_1,\dots,\alpha_n)$ from Proposition~\ref{prop:projection} is nonredundant if the SIM-body is proper.
Moreover, we will determine the vertex-facet incidences.
Any nonempty set $I\subseteq[n]$ yields the inequality
\begin{equation}\label{eq:active:linear}
  \sum_{i \in I} x_i \ \leq \ \alpha_{1}+\dots + \alpha_k \enspace .
\end{equation}
Consider the vertex $v:=(\alpha_1,\dots,\alpha_\ell,0,\dots,0)$ of $\Lambda(\alpha_1,\dots,\alpha_n)$.
Then we say that the set $I$ with $\card{I}=k$ is \emph{active} at $v$ if the inequality \eqref{eq:active:linear} is tight at $v$, i.e.,
\[
  \sum_{i\in I} v_i  \ = \ \sum_{i\in I\cap[\ell]}\alpha_i \ = \  \alpha_{1}+\dots + \alpha_k \enspace ,
\]
or equivalently $I\cap[\ell]=[k]$.
The latter holds if and only if
\begin{equation}\label{eq:active:combinatorial}
  k\leq \ell \quad \text{and} \quad I=[k] \enspace .
\end{equation}

\begin{lemma}
  A proper SIM-body is a simple polytope, and it has exactly $2^n+n-1$ facets.
\end{lemma}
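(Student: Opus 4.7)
My plan is to work directly from the $H$-description in Proposition~\ref{prop:projection}. That description lists $n$ nonnegativity constraints $x_i \geq 0$ together with an upper-bound constraint $\sum_{i \in I} x_i \leq \alpha_1 + \dots + \alpha_k$ for each nonempty $I \subseteq [n]$, yielding $n + (2^n - 1) = 2^n + n - 1$ inequalities in total. It thus suffices to show that (i) every vertex is incident to exactly $n$ of these hyperplanes, forcing simplicity since $\dim \Lambda(\alpha_1,\dots,\alpha_n) = n$ by Corollary~\ref{cor:SIM-combinatorics}, and (ii) each of the listed inequalities is facet-defining, so that none is redundant.

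For (i), I would pick any vertex of $\Lambda(\alpha_1,\dots,\alpha_n)$; by $\Sym(n)$-symmetry and Proposition~\ref{prop:SIM-vertices}, it suffices to consider $v = (\alpha_1,\dots,\alpha_\ell, 0,\dots, 0)$ for some $\ell \in \{0, 1,\dots, n\}$. The nonnegativity constraint $x_j \geq 0$ is tight precisely for $j \in \{\ell+1,\dots, n\}$, contributing $n - \ell$ tight constraints. By the characterization \eqref{eq:active:combinatorial} derived just above, the upper-bound constraints active at $v$ are exactly those indexed by $I = [k]$ for $k \in \{1,\dots,\ell\}$, contributing $\ell$ more. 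Hence exactly $n$ of the defining inequalities are tight at $v$.

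For (ii), I would identify each facet explicitly. For the nonnegativity constraint $x_j \geq 0$, the resulting face coincides, after dropping the $j$-th coordinate, with the SIM-body $\Lambda(\alpha_1,\dots,\alpha_{n-1})$ and therefore has dimension $n - 1$ by Corollary~\ref{cor:SIM-combinatorics}. For the upper-bound constraint $\sum_{i \in I} x_i \leq \alpha_1 + \dots + \alpha_k$, by $\Sym(n)$-symmetry I may assume $I = [k]$ and prove the product decomposition
\[
  \bigl\{x \in \Lambda(\alpha_1,\dots,\alpha_n) : \textstyle\sum_{i \in [k]} x_i = \alpha_1 + \dots + \alpha_k\bigr\} \ = \ \cP_k(\alpha_1,\dots,\alpha_k) \times \Lambda(\alpha_{k+1},\dots,\alpha_n) \enspace ,
\]
which has dimension $(k-1) + (n-k) = n - 1$. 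The forward inclusion splits the inequalities of Proposition~\ref{prop:projection} into ones involving only indices in $[k]$, giving the permutahedron by Lemma~\ref{lem:permutahedron}, and ones involving only indices in $\{k+1,\dots,n\}$, obtained by subtracting $\sum_{i \in [k]} x_i$ from the joint constraint on $J \cup [k]$.

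The main subtlety lies in the reverse inclusion. Given $(y,z) \in \cP_k(\alpha_1,\dots,\alpha_k) \times \Lambda(\alpha_{k+1},\dots,\alpha_n)$ and an arbitrary index set $I' \subseteq [n]$ with $|I'| = m$, I would split $I' = I'_1 \sqcup I'_2$ with $I'_1 \subseteq [k]$ and verify the bound $\sum_{i \in I'_1} y_i + \sum_{j \in I'_2} z_{j-k} \leq \alpha_1 + \dots + \alpha_m$. Combining the permutahedron and SIM-body bounds on the two factors produces the right-hand side $\alpha_1 + \dots + \alpha_{|I'_1|} + \alpha_{k+1} + \dots + \alpha_{k+|I'_2|}$, and the weak monotonicity $\alpha_{|I'_1|+j} \geq \alpha_{k+j}$ for $j = 1, \dots, |I'_2|$, which holds because $|I'_1| \leq k$, closes the gap. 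I expect this monotonicity check to be the only mildly non-trivial step; everything else is bookkeeping driven by \eqref{eq:active:combinatorial} and the recursive structure of SIM-bodies.
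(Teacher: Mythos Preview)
Your proof is correct. Part~(i) is exactly the argument the paper gives: using the characterization \eqref{eq:active:combinatorial}, one counts $\ell$ upper-bound constraints and $n-\ell$ nonnegativity constraints tight at $(\alpha_1,\dots,\alpha_\ell,0,\dots,0)$, and $\Sym(n)$-symmetry handles the remaining vertices.

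Where you diverge from the paper is in part~(ii). The paper does not verify irredundancy separately; it extracts it from~(i). Once one knows that exactly $n$ of the listed inequalities are tight at a vertex $v$ of an $n$-polytope, these $n$ inequalities are forced to be linearly independent (otherwise $v$ would not be a vertex), so the tangent cone at $v$ is simplicial and each tight inequality at $v$ is automatically facet-defining. Since the analysis in~(i) already shows that every one of the $2^n+n-1$ listed inequalities is tight at \emph{some} vertex (vary $\ell$ from $0$ to $n$ and act by $\Sym(n)$), the facet count follows at once. Your route via the explicit product decomposition
\[
  \bigl\{x\in\Lambda:\textstyle\sum_{i\in[k]}x_i=\alpha_1+\dots+\alpha_k\bigr\}\ =\ \cP_k(\alpha_1,\dots,\alpha_k)\times\Lambda(\alpha_{k+1},\dots,\alpha_n)
\]
is longer but buys more: it identifies each facet concretely as a product of a permutahedron and a smaller SIM-body, recovering the recursive facet structure alluded to in the figure caption and in \cite[Lemma~6.1]{GK+duality:2018}. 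The monotonicity step $\alpha_{|I'_1|+j}\geq\alpha_{k+j}$ is indeed all that is needed for the reverse inclusion. One tiny omission: you should also note $y_i\geq 0$ for $i\in[k]$, but this is immediate since every coordinate of a point in $\cP_k(\alpha_1,\dots,\alpha_k)$ is at least $\alpha_k>0$.
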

\begin{proof}
  Let $\alpha_1 > \cdots > \alpha_n > 0$.
  Then the SIM-body $\Lambda:=\Lambda(\alpha_1,\dots,\alpha_n)$ is $n$-dimensional.
  We need to show that each vertex is incident with exactly $n$ facets.
  As in our previous deliberations we consider the vertex $v:=(\alpha_1,\dots,\alpha_\ell,0,\dots,0)$.
  From \eqref{eq:active:combinatorial} we obtain exactly $\ell$ active sets at $v$.
  These contribute the same number of inequalities like \eqref{eq:active:linear} which are tight at $v$.
  Additionally, exactly the last $n-\ell$ nonnegativity constraints are tight at $v$.
  The facet count results from $2^n-1$ nonempty active sets and $n$ nonnegativity constraints.
\end{proof}

We are now able to prove our main result about the geometry of SIM-bodies.
\begin{theorem}\label{thm:generalized}
  Let $\alpha_1 \geq \cdots \geq \alpha_n \geq 0$.
  Then each edge of the homogeneous SIM-body $\Gamma(\alpha_1,\dots,\alpha_n)$ is parallel to the difference of two standard basis vectors.
\end{theorem}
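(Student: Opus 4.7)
The plan is to invoke Postnikov's characterization of generalized permutahedra in order to reduce the edge-direction statement to a combinatorial inequality on the parameters $z_I$. Specifically, for a family $(z_I)_{I \subseteq [n+1]}$ with the convention $z_\emptyset := 0$, the polytope $\cP_{n+1}\{z\}$ from \eqref{eq:generalized} has every edge parallel to a difference $e_i - e_j$ of standard basis vectors---equivalently, is a generalized permutahedron in the sense of Postnikov---if and only if the family is supermodular, i.e.
\[
  z_{I \cup J} + z_{I \cap J} \ \geq \ z_I + z_J \qquad \text{for all } I, J \subseteq [n+1];
\]
see \cite[\S6]{Postnikov:2009}. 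Thus the theorem reduces to verifying this supermodularity for the specific $z$ defined in \eqref{eq:z_I}.

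I would carry out the verification by a short case analysis on how the distinguished index $n+1$ distributes among $I$ and $J$. If $n+1 \notin I \cup J$, then all four of $z_I, z_J, z_{I \cap J}, z_{I \cup J}$ vanish and the inequality is trivial. If $n+1$ lies in exactly one of $I, J$---say $n+1 \in I \setminus J$---then $z_J = z_{I \cap J} = 0$, and the inequality collapses to $z_{I \cup J} \geq z_I$; this is immediate from \eqref{eq:z_I}, since $|I| \leq |I \cup J|$ and the $\alpha_i$ are nonnegative, so $z_{I \cup J}$ is obtained from $z_I$ by adding nonnegative summands.

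The only substantive case is $n+1 \in I \cap J$, where all four subsets contain $n+1$. Introducing $a := |I|$, $b := |J|$, $m := |I \cap J|$, $u := |I \cup J|$, one has $m + u = a + b$ and $m \leq \min(a,b) \leq \max(a,b) \leq u$, and with the partial sums $S_j := \sum_{i=1}^{j} \alpha_{n+1-i}$ the claim becomes $S_{u-1} + S_{m-1} \geq S_{a-1} + S_{b-1}$. This is precisely where the hypothesis $\alpha_1 \geq \cdots \geq \alpha_n \geq 0$ enters: it makes the increments $S_{j+1} - S_j = \alpha_{n-j}$ weakly increasing in $j$, so $S$ is a convex sequence, and the desired inequality follows from the standard fact that spreading out the arguments of a convex sequence while preserving their sum can only increase the sum of its values. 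A direct term-by-term comparison of $S_{u-1} - S_{b-1}$ with $S_{a-1} - S_{m-1}$---both of which are sums of $a - m = u - b$ many values $\alpha_{n+1-i}$, the former with smaller indices and hence pointwise larger entries---makes this explicit. I expect this convexity step to be the main obstacle, though it is quite short; everything else is bookkeeping.
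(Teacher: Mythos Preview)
Your proposal is correct and takes a genuinely different route from the paper's own argument.

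The paper proceeds geometrically and case-by-case: it first treats the regular homogeneous SIM-body $\Gamma(n,n-1,\dots,1)$, identifies for each vertex the $n$ incident facets (active sets plus nonnegativity constraints), and for every adjacent vertex $w$ checks directly that $v-w$ is a multiple of some $e_i-e_j$, invoking Lemma~\ref{lem:permutahedron} for edges within a single $\Sym(n)$-orbit. It then transfers the conclusion to arbitrary proper SIM-bodies via normal equivalence (Corollary~\ref{cor:SIM-combinatorics}\eqref{it:SIM-combinatorics:equivalent}) and handles the degenerate parameter configurations by a continuity argument.

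You instead reduce everything to the supermodularity of the family $(z_I)$ from \eqref{eq:z_I}, using the standard equivalence between submodular rank functions and base polytopes with root-direction edges. Your case analysis on the location of $n+1$ is clean, and the only nontrivial step---the inequality $S_{u-1}+S_{m-1}\geq S_{a-1}+S_{b-1}$ for the convex partial-sum sequence---is exactly where the hypothesis $\alpha_1\geq\cdots\geq\alpha_n\geq 0$ enters; the term-by-term comparison you sketch makes this explicit. Your approach is shorter and treats all parameter choices uniformly, with no separate degeneration argument needed. The trade-off is that you import a nontrivial structural theorem (the polymatroid/generalized-permutahedron characterization), whereas the paper's proof is more self-contained in that respect but leans on its earlier combinatorial description of the vertices and facets.
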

A polytope with that property is called a \emph{generalized permutahedron} \cite{Postnikov:2009}.
\begin{proof}
  First, consider the special case where $\alpha_k=n-k+1$ for $k\in[n]$; i.e., $\Gamma_n:=\Gamma(n,n-1,\dots,1)$ is the regular homogeneous SIM-body of rank $n$ in $\RR^{n+1}$.
  We abbreviate $\bar \ell:=\ell{+}1 + \ell{+}2 + \dots + n$.
  Then $v:=(1,\dots,\ell,0,\dots,0,\bar \ell)$ is a vertex of $\Gamma_n$.
  There are $\ell$ active sets and $n-\ell$ (lifts of) nonnegativity constraints which define the $n$ facets incident with $v$.
  Let $w$ be a vertex adjacent to $v$.
  Then there are exactly $n-1$ facets incident to both vertices.
  Several cases arise.
  For instance, it may happen that $\ell-1$ of the facets from sets active at $v$ and all $n-\ell$ (lifts of) nonnegativity constraints are tight at $w$.
  Then $w=(1,\dots,\ell{-}1,0,\dots,0,\overline{\ell{-}1})$, and $v-w=\ell(e_\ell-e_{n+1})$.
  The same reasoning, with opposite signs, applies if the number of active sets increases when we walk from $v$ to $w$.
  It remains to discuss the situation when the number of facets from active sets incident with $v$ and $w$ is the same.
  This occurs if and only if $v$ and $w$ lie in the same orbit of the action of $\Sym(n)$.
  From Proposition~\ref{prop:SIM-vertices} we see that these vertices form the vertices of the permutahedron $\cP_n (n,\dots,\ell,0,\dots,0)$, after projection to $\RR^n$.
  So in this case the claim follows from Lemma~\ref{lem:permutahedron}.

  Now Corollary~\ref{cor:SIM-combinatorics}\eqref{it:SIM-combinatorics:equivalent} proves the claim for an arbitrary SIM-body which is proper, i.e., it satisfies $\alpha_1 > \cdots > \alpha_n > 0$.
  This leaves the degenerate cases where some, or all, of the parameters coincide.
  Then some edge lengths shrink to zero, but this does not affect the edge directions.
\end{proof}

\section{Optimal Deterministic Auctions}
\label{sec:auctions}
We now turn to our main topic.
A single buyer is interested in buying $n$ items.
The value of the buyer for item~$i \in [n]$ is denoted by $x_i$ and drawn from the uniform distribution on $[0,1]$. The value $x_i$ corresponds to the amount of money the buyer is willing to spend in order to receive item~$i$ and is the private information of the buyer.
A \emph{direct revelation mechanism} is a tuple $M = (a,r)$ of functions $a : [0,1]^n \to [0,1]^n$ and $r : [0,1]^n \to \RR$.
The mechanism elicits a bid vector $x' \in [0,1]^n$ from the buyer.
It then sells to the buyer at a price of $r(x')$ the following lottery: For each item $i \in [n]$ the buyer receives item~$i$ with probability $a_i(x')$ where the probabilities for different items are independent of each other.
We call a direct revelation mechanism an \emph{auction}.
An auction is \emph{deterministic} if $a(x') \in \{0,1\}^n$ for all $x' \in [0,1]^n$. The price $r(x')$ payed by the buyer is also called the \emph{revenue} of the auction. The buyer is risk-neutral and has additive valuations, meaning that their utility is equal to the expected value of the items received by the mechanism minus the payment. Fixing a direct revelation mechanism $M = (a,r)$, the utility of the buyer with valuation $x$ who bids $x'$ is given by
\[
u(x' \mid x) \ = \ -r(x') +\sum_{i=1}^n a_i(x')x_i \enspace.
\]
We impose the following two assumptions on the design space of direct revelation mechanisms.
First, we require that the buyer prefers to bid according to their actual valuation vector $x$ over claiming any other bid vector $x' \neq x$. Bidding the true valuation is also called \emph{truthful} bidding. A mechanism is called \emph{incentive compatible} if bidding truthfully is an optimal strategy of the buyer, i.e.,
\begin{equation}
\label{eq:IC}
u(x \mid x) \geq u(x'\mid x) \quad \text{for all $x,x' \in [0,1]^n$} \enspace.\tag{IC}	
\end{equation}
Second, we require that the mechanism is \emph{individually rational}. This property (which is sometimes also called \emph{voluntary participation}) requires that for every realization~$x$ of the buyers' valuations, they are guaranteed to leave the auction with a nonnegative utility when bidding truthfully. Formally, the individual rationality constraint reads
\begin{align}
\label{eq:IR}
u(x\mid x) \geq 0 \quad \text{for all $x \in [0,1]^n$} \enspace. \tag{IR}	
\end{align}
Let $u(x) = u(x \mid x)$ for all $x \in [0,1]^n$.
Our goal is to design an auction that maximizes the expected revenue under the constraints that the auction is incentive compatible and individual rational, i.e., we are interested in solving
\begin{alignat*}{2}
\sup  \quad &\EE[r(x)] \\
\text{s.t.} \quad & a : [0,1]^n \to [0,1]^n \text{ and } r : [0,1]^n \to \RR\enspace,\\
& M= (a,r) \text{ satisfies \eqref{eq:IC} and \eqref{eq:IR}} \enspace,
\end{alignat*}
where the expectation in the objective is taken over all draws of $x_i$ from the uniform distribution in $[0,1]$. 
The following result of Rochet~\cite{Rochet1985} characterizes the set of mechanisms satisfying \eqref{eq:IC} in terms of convexity.
\begin{proposition}\label{prop:rochet}
  The auction $M = (a,r)$ satisfies \eqref{eq:IC} if and only if the utility $u(x)$ is convex and differentiable almost everywhere with
  \begin{align*}
    {\partial u(x)}/{\partial x_i} = a_i(x) \quad \text{ for all $i \in [n]$ and almost all $x \in [0,1]^n$}\enspace. 
  \end{align*}
\end{proposition}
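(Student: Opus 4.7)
The plan is to read \eqref{eq:IC} as the assertion that $u(x) = u(x \mid x)$ is the pointwise supremum of the affine functions $x \mapsto -r(x') + \langle a(x'), x\rangle$ indexed by $x' \in [0,1]^n$, with the supremum attained at the \enquote{diagonal} choice $x' = x$. This reformulation is the hinge for both directions and reduces the proposition to standard convex analysis.

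For the \enquote{only if} direction, I would first observe that a pointwise supremum of affine functions is automatically convex, so \eqref{eq:IC} immediately yields convexity of $u$. A classical theorem then guarantees that a finite convex function on the open unit cube is locally Lipschitz and hence differentiable almost everywhere. At a point $x$ of differentiability, the envelope theorem (or a direct subgradient computation) applies: since the supremum is attained at $x' = x$, the gradient of $u$ at $x$ must coincide with the $x$-gradient of the affine function $-r(x) + \langle a(x), \cdot\rangle$, which is exactly $a(x)$. This establishes $\partial u(x)/\partial x_i = a_i(x)$ almost everywhere.

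For the \enquote{if} direction, assume $u$ is convex and $\nabla u(x') = a(x')$ for almost every $x'$. The identity $u(x') = u(x' \mid x') = -r(x') + \langle a(x'), x'\rangle$ yields $r(x') = \langle a(x'), x'\rangle - u(x')$. The subgradient inequality for the convex function $u$ at a point of differentiability $x'$ reads $u(x) \geq u(x') + \langle a(x'), x - x'\rangle$; substituting the formula for $r(x')$ rewrites the right-hand side as $-r(x') + \langle a(x'), x\rangle = u(x' \mid x)$, which is precisely the content of \eqref{eq:IC}. The principal obstacle I anticipate is that this derivation directly covers only the full-measure set of $x'$ where $u$ is differentiable; to obtain \eqref{eq:IC} for every $x' \in [0,1]^n$, one either reads the proposition as requiring $a(x')$ to be a selection from the subdifferential $\partial u(x')$ at the remaining points (this is the original form of Rochet's theorem), or one extends by continuity in $x'$, exploiting that the convex function $u$ is continuous on $[0,1]^n$.
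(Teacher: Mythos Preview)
The paper does not supply its own proof of this proposition: it is stated as a known result of Rochet~\cite{Rochet1985} and simply cited. Your sketch is the standard argument behind that result---\eqref{eq:IC} exhibits $u$ as a pointwise supremum of affine functions, giving convexity and then almost-everywhere differentiability, with the envelope/subgradient identification producing $\nabla u = a$; the converse is the subgradient inequality rewritten---and it is correct. Your caveat about the null set in the \enquote{if} direction is exactly the technical point that distinguishes the almost-everywhere statement here from Rochet's original formulation in terms of subdifferentials.
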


Proposition~\ref{prop:rochet} allows to parameterize the space of all auctions satisfying \eqref{eq:IC} by the utility functions $u : [0,1]^n \to \RR$ that they impose on the buyer.
The constraint that the auction is \eqref{eq:IR} can then be incorporated by requiring that $u : [0,1]^n \to \RR_{\geq 0}$.
Together with the observation that $r(x) = \sum_{i=1}^n a_i(x)x_i- u(x)$, we obtain the following optimization problem to determine a revenue maximizing auction satisfying \eqref{eq:IC} and \eqref{eq:IR} 
\begin{equation}\label{eq:primal} 
\begin{aligned}
\sup \quad &\EE\Biggl[\sum_{i=1}^n {\partial u(x)}/{\partial x_i} \ x_i \,-\, u(x)\Biggr] \\
\text{s.t.} \quad & u : [0,1]^n \to \RR_{\geq 0} \text{ is convex}\enspace,\\
& 0 \leq \frac{\partial u(x)}{\partial x_i} \leq 1 \quad \text{ for all $i \in [n]$ and almost all $x \in [0,1]^n$} \enspace. 
\end{aligned}
\end{equation}

For the following, let us assume that the primal solution is a deterministic auction. 
Formally, let $\{p_I\}_{I \subseteq [n]}$ be a \emph{price schedule} where we require $p_{\emptyset} = 0$.
A price schedule defines a unique deterministic auction where the buyer reports their valuation and is assigned a subset $I \subseteq [n]$ of items that maximizes their utility which is the valuation attached to the items contained in the set minus the price $p_I$ for set $I$.
Consequently, the utility of the buyer for this auction is given by the function
\begin{equation}
  \label{eq:tropical_polynomial}
  u(x) \ = \ \max \biggSetOf{ \sum_{i \in I} x_i - p_I  }{I \subseteq [n]} \enspace ,
\end{equation}
which is the evaluation of a $\max$-tropical polynomial of degree $n$.
The set of points in~$\RR^n$ where that maximum is attained at least twice is the \emph{tropical hypersurface} $\cT(u)$; see \cite[\S3.1]{Maclagan+Sturmfels:2015} or \cite[\S1.1]{ETC}.
This agrees with the set of points where the evaluation $x\mapsto u(x)$ is nondifferentiable.
For any price schedule $\{p_I\}_{I \subseteq [n]}$, the tropical hypersurface of $u$ subdivides the $n$-dimensional unit cube $[0,1]^n$ into polytopes $\{D_I\}_{I \subseteq [n]}$ such that $D_I$ is the set of valuations $x$ for which the maximum in \eqref{eq:tropical_polynomial} is attained at $I$.
Formally, we have
\[
  D_I \ = \ \Biggl\{x \in [0,1]^n \;\Bigg\vert\; \sum_{i \in I} x_i - p_I \geq \sum_{j \in J} x_j - p_J \text{ for all } J \subseteq [n] \setminus I \Biggr\} \enspace.	
\]
The polytopes $D_I$ are called the \emph{regions} of $\{p_I\}_{I \subseteq [n]}$.
In fact, they are the intersections of the unit cube with the regions of linearity of the tropical hypersurface $\cT(u)$.

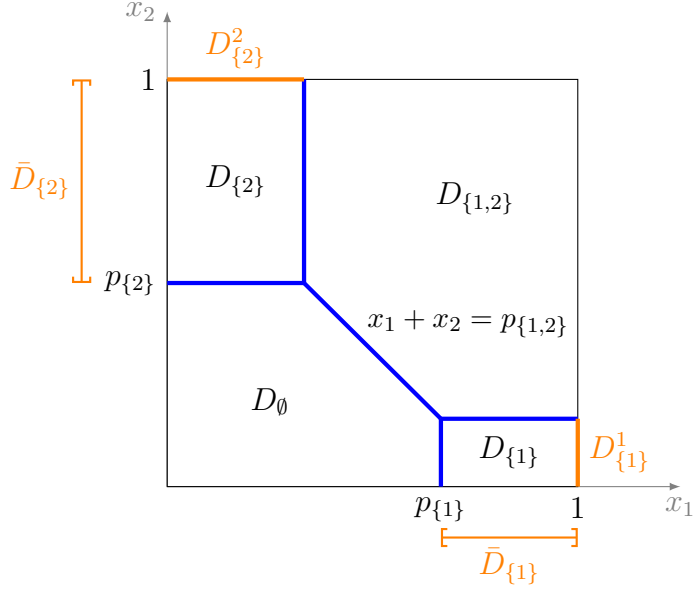
\begin{figure}[th]
  \tikzset{
    axis/.style={thin,gray,-latex},
    tropical/.style={ultra thick,blue}, 
    border/.style={{[-]}, orange, thick}
  }
  \begin{tikzpicture}[scale = 0.9]
    \draw[axis] (0,0)--(7.5,0) node[below]{$x_1$};
    \draw[axis] (0,0)--(0,7) node[left]{$x_2$};
    \draw (0,0)--(6,0)--(6,6)--(0,6)--cycle;

    \draw[tropical] (4,0)--(4,1);
    \draw node[below] at (4,0) {$p_{\{1\}}$};
    \draw node[below] at (6,0) {$1$};
    \draw[tropical] (0,3)--(2,3);
    \draw node[left] at (0,3) {$p_{\{2\}}$};
    \draw node[left] at (0,6) {$1$};
    \draw[tropical] (2,3)--(4,1);
    \draw[tropical] (2,3)--(2,6);
    \draw[tropical] (4,1)--(6,1);
    
    \draw[border] (-1.25,3)--(-1.25,6);
    \draw node[left, orange] at (-1.25,4.5) {$\bar D_{\{2\}}$};
    \draw[ultra thick, orange] (0,6)--(2,6);
    \draw node[above, orange] at (1,6) {$D_{\{2\}}^2$};
    
    \draw[border] (4,-.75)--(6,-.75);
    \draw node[below, orange] at (5,-.75) {$\bar D_{\{1\}}$};
    \draw[ultra thick, orange] (6,0)--(6,1);
    \draw node[right, orange] at (6,.5) {$D_{\{1\}}^1$};
    
    \draw node at (4.4,2.4) {\small $x_1 + x_2 = p_{\{1,2\}}$};
    \draw node at (1.5,1.25) {$D_{\emptyset}$};
    \draw node at (5,0.5) {$D_{\{1\}}$};
    \draw node at (1,4.5) {$D_{\{2\}}$};
    \draw node at (4.5,4.25) {$D_{\{1,2\}}$};
  \end{tikzpicture}
  \caption{Tropical hypersurface and regions of the mechanism with price schedule $p_{\{1\}} = \frac{2}{3}$, $p_{\{2\}} = \frac{1}{2}$ and $p_{\{1,2\}} = \frac{5}{6}$, which is submodular.
    If the valuation of the buyer lies in $D_I$, their utility is maximized by buying the items in $I$.
    The polytope $\bar D_I$ is the projection of $D_I$ onto the coordinates in $I$, and $D_I^i$ is the intersection of $D_I$ with the facet $x_i=1$ of the cube.}
  \label{fig:tropical}
\end{figure}

Manelli and Vincent~\cite{ManelliVincent:2006} describe necessary conditions for the optimality of a price schedule for an arbitrary distribution of the valuations.
In the uniform case their criterion \cite[Theorem 1]{ManelliVincent:2006} says that an optimal price schedule $\{p_I\}_{I\subseteq[n]}$ necessarily satisfies
\begin{equation} \label{eq:deficiency}
  \vol_n{D_I} \, - \, \frac{1}{n+1} \sum_{i \in I} \vol_{n-1}{D_I^i} \ = \ 0
\end{equation}
for all $I \neq \emptyset$ with $\vol_n D_I > 0$, where $x_{-i}$ skips the $i$-th coordinate of $x$, and $D_I^i = \{x_{-i} \mid (1, x_{-i}) \in D_I \}$ is the \emph{$i$-th upper boundary} of the region $D_I$.
The following lemma says that $D_I^i$, which is defined via intersecting the region $D_I$ with one facet of the unit cube, agrees with a full projection of $D_I$ by leaving out the $i$-th coordinate.
\begin{lemma}\label{lem:upwards_closed}
  Let $\{p_I\}_{I\subseteq[n]}$ be an arbitrary price schedule.
  Further, let $x \in [0,1]^n$ and $I \subseteq[n]$ such that $x \in D_I$.
  Then $x_{-i}\in D_I^i$ for all $i \in I$.
\end{lemma}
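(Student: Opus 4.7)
The claim amounts to showing that if $x \in D_I$ and $i \in I$, then replacing the $i$-th coordinate of $x$ by $1$ keeps the point inside $D_I$. So my plan is to fix $x \in D_I$ and $i \in I$, and define the modified point $y \in [0,1]^n$ by $y_i = 1$ and $y_j = x_j$ for $j \neq i$. Since $y_{-i} = x_{-i}$, it suffices to verify $y \in D_I$, because this is exactly the statement $x_{-i} \in D_I^i$.

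To verify $y \in D_I$, I would unpack the defining inequalities of $D_I$: for every $J \subseteq [n]\setminus I$, one needs
\[
  \sum_{k \in I} y_k - p_I \ \geq \ \sum_{k \in J} y_k - p_J \enspace .
\]
The key observation is that $i \in I$ while $i \notin J$, because $J$ is disjoint from $I$. Therefore, in passing from $x$ to $y$, the left-hand sum increases by exactly $1 - x_i \geq 0$ while the right-hand sum is unchanged. Combining this with the corresponding inequality for $x \in D_I$ gives
\[
  \sum_{k \in I} y_k - p_I \ = \ \sum_{k \in I} x_k - p_I + (1-x_i) \ \geq \ \sum_{k \in I} x_k - p_I \ \geq \ \sum_{k \in J} x_k - p_J \ = \ \sum_{k \in J} y_k - p_J \enspace ,
\]
which finishes the proof. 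There is no real obstacle here — the argument is just a one-step monotonicity observation, exploiting that enlarging a coordinate belonging to $I$ can only strengthen the optimality of bundle $I$ against competing bundles that do not contain~$i$.
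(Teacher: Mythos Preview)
Your proof follows the same strategy as the paper's, but there is a gap: you verify the defining inequality of $D_I$ only against competitors $J \subseteq [n]\setminus I$, i.e.\ bundles disjoint from $I$. That is indeed how the displayed formula for $D_I$ reads in the paper, but that formula is inconsistent with the rest of the text; the region $D_I$ is meant to be the locus where $I$ attains the maximum in \eqref{eq:tropical_polynomial} over \emph{all} $J\subseteq[n]$, and the paper's own proof of this lemma explicitly lets $J$ range over all of $2^{[n]}$ and splits into the two cases $i\notin J$ and $i\in J$. Your argument covers only the first case.

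The missing case $i\in J$ is equally easy---both $\sum_{k\in I} y_k$ and $\sum_{k\in J} y_k$ increase by the same amount $1-x_i$, so the inequality carries over unchanged---but it does need to be said. Once you add that one line, your proof coincides with the paper's.
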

\begin{proof}
  Fixing $i\in I$ our goal is to show that $x':=(1, x_{-i}) \in D_I$.
  As $x \in D_I$, we have
  \[
    x_i + \sum_{j \in I \setminus \{i\}} x_j - p_I \ \geq \ \sum_{j \in J} x_j - p_J
  \]
  for all $J \subseteq[n]$.
  If $i \notin J$ the left hand side of that inequality increases if we exchange $x$ for $x'$, whereas the right hand side stays the same.
  Otherwise, if $i \in J$, we have:
  \[
    \sum_{j \in I} x'_j- p_I \ = \ 1+(x_i - x_i) + \sum_{j \in I \setminus \{i\}} x_j - p_I \ \geq \ 1-x_i + \sum_{j \in J} x_j - p_J \ = \ \sum_{j \in J} x'_j - p_J \enspace .
  \]
  In both cases we obtain $x' \in D_I$.
\end{proof}
The deficiency condition \eqref{eq:deficiency} leads us to analyzing regions of positive volume.
\begin{lemma}\label{lem:relevantPrice}
  Let $\{p_I\}_{I\subseteq[n]}$ be an arbitrary price schedule.
  Further, let $I \subseteq [n]$ with $\vol_n D_I > 0$.
  Then $p_I < p_J$ for all $J \supsetneq I$.
\end{lemma}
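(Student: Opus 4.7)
The plan is to unpack the defining inequality of the region $D_I$ for the particular comparison with $J \supsetneq I$, and then exploit the positive volume hypothesis to rule out equality.

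First, I would fix an element $j_0 \in J \setminus I$, which exists since $J \supsetneq I$. For any $x \in D_I$, the defining inequality of the region (applied to the bundle $J$) gives
\[
  \sum_{i \in I} x_i - p_I \ \geq \ \sum_{j \in J} x_j - p_J \enspace .
\]
Using $I \subseteq J$, this rearranges to
\[
  p_J - p_I \ \geq \ \sum_{j \in J \setminus I} x_j \ \geq \ x_{j_0} \ \geq \ 0 \enspace ,
\]
since all coordinates lie in $[0,1]$. This already yields $p_J \geq p_I$ for every nonempty $D_I$; the point is to upgrade the inequality to a strict one.

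Next I would use the volume hypothesis. Since $\vol_n D_I > 0$, the subset $\{x \in D_I : x_{j_0} = 0\}$, being contained in a coordinate hyperplane, has $n$-dimensional Lebesgue measure zero, so $\{x \in D_I : x_{j_0} > 0\}$ has positive volume and in particular is nonempty. Picking such an $x$ and applying the inequality derived above yields $p_J - p_I \geq x_{j_0} > 0$, which is the claim.

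There is no real obstacle here; the argument is a direct consequence of the definition of $D_I$ combined with the elementary fact that a set of positive $n$-volume cannot lie inside a coordinate hyperplane. The only subtle point, which the same reasoning handles, is to make sure that the comparison with $J$ is available: one might worry whether $J$ needs to be disjoint from $I$ for the defining inequality to apply, but the standard interpretation of the region $D_I$ (consistent with the proof of Lemma~\ref{lem:upwards_closed}) is that the maximum in \eqref{eq:tropical_polynomial} is attained at $I$, so the inequality holds for every $J \subseteq [n]$.
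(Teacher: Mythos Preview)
Your proof is correct and follows essentially the same approach as the paper: both extract a strict inequality from the positive-volume hypothesis, the paper by picking an interior point of $D_I$ where all defining inequalities are strict, and you by observing that $D_I$ cannot lie in the hyperplane $\{x_{j_0}=0\}$. Your closing remark about the quantification over $J$ is well taken and matches how the paper itself uses $D_I$ in its proof.
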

\begin{proof}
  By definition $\vol_n D_I > 0$ if and only if there exists $x \in [0,1]^n$ such that
  \[
    \sum_{i \in I} x_i - p_I \ > \ \sum_{j \in J} x_j - p_J
  \]
  for all $I \neq J \subseteq[n]$.
  For $J \supsetneq I$ we have $\sum_{j \in J} x_j \geq \sum_{i \in I} x_i$, whence we conclude $p_I < p_J$.
\end{proof}
Let $\pi_I$ be the projection onto the coordinates in $I\subseteq[n]$. 
The next lemma shows a recursive property of deterministic auctions.
Under the condition that for a given bundle $I$ all regions of its sub-bundles $J \subseteq I$ have positive volume, the projections of those regions only depend on the prices of sub-bundles of $I$.
This will be useful later, when we want to compute certain price schedules.
\begin{lemma}\label{lem:shape}
  Let $\{p_I\}_{I \subseteq[n]}$ be an arbitrary price schedule.
  Further, let $I \subseteq [n]$ with $\vol_n D_J > 0$ for all $J \subseteq I$.
  Then 
  \[
    \pi_I(D_J) \ = \ \biggSetOf{x \in [0,1]^I}{\sum_{j \in J} x_j - p_{J} \geq \sum_{j \in K} x_j - p_{K} \text{ for all } K \subseteq I} \enspace .
  \]
\end{lemma}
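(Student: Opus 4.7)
The plan is to prove set equality by showing both inclusions, with the forward direction being essentially tautological and the reverse direction being where the positive-volume hypothesis plays its role.

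For the inclusion $\pi_I(D_J) \subseteq \{\,x \in [0,1]^I \mid \sum_{j\in J}x_j - p_J \geq \sum_{k\in K}x_k - p_K \text{ for all } K \subseteq I\,\}$, I would start with $y \in D_J$ and observe that, since $J \subseteq I$ and each $K \subseteq I$ is in particular a subset of $[n]$, the defining inequalities of $D_J$ already include $\sum_{j\in J} y_j - p_J \geq \sum_{k \in K} y_k - p_K$ for every $K \subseteq I$. Since all the coordinates appearing on both sides are indexed by $I$, these inequalities descend verbatim under $\pi_I$.

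For the reverse inclusion, given an $x \in [0,1]^I$ satisfying the stated inequalities, I would exhibit an explicit lift: define $y \in [0,1]^n$ by $y_i = x_i$ for $i \in I$ and $y_i = 0$ otherwise. Then $\pi_I(y) = x$, and it remains to verify $y \in D_J$, i.e., $\sum_{j\in J} y_j - p_J \geq \sum_{k \in K} y_k - p_K$ for every $K \subseteq [n]$. Setting $K' := K \cap I$, the right-hand side equals $\sum_{k \in K'} x_k - p_K$ since the coordinates outside $I$ vanish. The assumption on $x$ gives $\sum_{j\in J} x_j - p_J \geq \sum_{k \in K'} x_k - p_{K'}$, so the inequality reduces to showing $p_{K'} \leq p_K$.

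This final monotonicity step is where the hypothesis enters. Since $K' \subseteq I$, by assumption $\vol_n D_{K'} > 0$, and because $K \supseteq K'$, Lemma~\ref{lem:relevantPrice} yields $p_{K'} < p_K$ whenever $K \supsetneq K'$ (and trivially $p_{K'} = p_K$ if $K = K'$). I expect the main subtlety to be just this bookkeeping: one has to make sure that $K'$ itself satisfies the hypothesis of Lemma~\ref{lem:relevantPrice}, which is exactly guaranteed by the assumption that every sub-bundle of $I$ has a full-dimensional region. Chaining the inequalities then finishes the proof.
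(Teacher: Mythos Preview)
Your proposal is correct and follows essentially the same approach as the paper: both arguments hinge on lifting a point $x\in[0,1]^I$ to $y\in[0,1]^n$ by padding with zeros outside $I$, and then invoking Lemma~\ref{lem:relevantPrice} (applied to $K'=K\cap I\subseteq I$, for which $\vol_n D_{K'}>0$ by hypothesis) to obtain $p_{K\cap I}\le p_K$ and thereby reduce the constraints indexed by arbitrary $K\subseteq[n]$ to those with $K\subseteq I$. The only cosmetic difference is that the paper first observes a ``downwards-closed'' property (analogous to Lemma~\ref{lem:upwards_closed}) to identify $\pi_I(D_J)$ with the slice $D_J\cap\{x_i=0:i\notin I\}$, whereas you handle the forward inclusion directly; your organization is arguably a bit cleaner.
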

\begin{proof}
  Let $J$ be a subset of $I$.
  As in Lemma \ref{lem:upwards_closed} we can check that for $x \in D_J$ also $(0,x_{-j})$ is contained in $D_J$ if $j \notin J$.
  This implies that $\pi_I(D_J)$ arises from $D_J \cap \SetOf{x}{x_i = 0 \text{ for } i \notin I}$ by skipping coordinates.
  So let $x \in [0,1]^n$ with $x_i = 0$ for $i\notin I$.
  From Lemma \ref{lem:relevantPrice} we get $p_{K\cap I} \leq p_K$ for each $K \subseteq [n]$ and hence
  \[
    \sum_{j \in K} x_j - p_K \ \leq \ \sum_{j \in K\cap I} x_j - p_{K\cap I} \enspace ,
  \]
  which finishes the proof.
\end{proof}

We are particularly interested in the following special class of price schedules.
A price schedule $p$ is \emph{submodular}, if
\begin{equation}\label{eq:submodular}
  p_{I\cup J} + p_{I\cap J} \ \leq \ p_I + p_J \enspace .
\end{equation}
for all $I,J \subseteq[n]$; see Figure~\ref{fig:tropical} for an example with $n=2$ items.
The submodularity of a price schedule is a desirable property for the seller.
Namely, it prevents a buyer interested in the items in $I \cup J$ from first buying $I$ and $J$ separately at prices $p_I$ and $p_J$ and then returning the (duplicate) items in the intersection $I \cap J$ at price $p_{I \cap J}$, instead of buying the set $I \cup J$ at price $p_{I \cup J}$ directly.
For a more comprehensive discussion see~\cite[Definition~4]{ManelliVincent:2006}.
The regions of submodular price schedules can be written as products of some of their projections.
\begin{lemma}\label{lem:product}
  Let $\{p_I\}_{I\subseteq[n]}$ be a submodular price schedule.
  Then  $D_I = \pi_I(D_I) \times \pi_{I^c}(D_I)$ for all $I \subseteq[n]$ and $I^c = [n] \setminus I$.
  Moreover, $D_I^i = \pi_{I\setminus \{i\}} ( \pi_I(D_I)) \times \pi_{I^c}(D_I)$ for all $i \in I$.
\end{lemma}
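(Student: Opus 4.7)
The plan is to derive the second assertion from the first with the help of Lemma~\ref{lem:upwards_closed}, so my main effort will go into proving $D_I = \pi_I(D_I) \times \pi_{I^c}(D_I)$ by double inclusion. The inclusion $D_I \subseteq \pi_I(D_I) \times \pi_{I^c}(D_I)$ is immediate from the definition of a Cartesian product of projections. For the reverse inclusion, I take $x = (a,b)$ with $a \in \pi_I(D_I)$ and $b \in \pi_{I^c}(D_I)$, pick witnesses $y, z \in D_I$ satisfying $\pi_I(y) = a$ and $\pi_{I^c}(z) = b$ (so that $y$ and $x$ agree on the coordinates in $I$, while $z$ and $x$ agree on those in $I^c$), and then need to verify $x \in D_I$.

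Given an arbitrary comparison subset $J \subseteq [n]$, I split it as $J = J_1 \cup J_2$ with $J_1 = J \cap I$ and $J_2 = J \cap I^c$, and extract two linear inequalities by applying the defining inequalities of $D_I$ to $y$ and $z$ with carefully chosen comparison sets. Using the comparison set $J_1 \subseteq I$ against $y$ gives $\sum_{i \in I\setminus J_1} x_i \geq p_I - p_{J_1}$, and using the comparison set $I \cup J_2$ against $z$ gives $-\sum_{j \in J_2} x_j \geq p_I - p_{I\cup J_2}$. Adding these two bounds and invoking submodularity \eqref{eq:submodular} in the form $p_I + p_J \geq p_{J_1} + p_{I \cup J_2}$ (take $A = I$ and $B = J$, so $A\cap B = J_1$ and $A \cup B = I \cup J_2$) produces exactly $\sum_{i \in I\setminus J_1} x_i - \sum_{j \in J_2} x_j \geq p_I - p_J$, which rearranges to the desired $\sum_{i\in I} x_i - p_I \geq \sum_{j\in J} x_j - p_J$.

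For the second claim, Lemma~\ref{lem:upwards_closed} tells me that $D_I$ is upward-closed in each coordinate $i \in I$, so that the slice $D_I^i$ is nothing but the projection of $D_I$ along the $i$-th coordinate. Since $i \in I$, this projection commutes with the product decomposition established in the first part: it only affects the $I$-factor, leaving the $I^c$-factor untouched. Substituting then yields $D_I^i = \pi_{I\setminus\{i\}}(\pi_I(D_I)) \times \pi_{I^c}(D_I)$.

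The main obstacle is locating the correct pair of comparison sets $J_1$ and $I \cup J_2$ in step two. The choice is dictated precisely by the shape of the submodularity inequality: it is designed so that the two one-sided bounds coming from the independent witnesses $y$ and $z$ can be summed and then stitched together by a single submodularity estimate to recover the joint bound on $x$. Without submodularity this cross-coordinate decoupling fails, so the assumption is used in an essential way.
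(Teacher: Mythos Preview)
Your argument is correct. The paper itself does not prove the first claim but defers it to Manelli and Vincent~\cite[Lemma~2]{ManelliVincent:2006}; you have supplied a clean self-contained proof of that decomposition, choosing the comparison sets $J_1=J\cap I$ and $I\cup J_2$ precisely so that the two witness inequalities sum and then collapse via the single submodularity estimate $p_{I\cup J}+p_{I\cap J}\leq p_I+p_J$. For the second claim your treatment coincides with the paper's: both reduce it to the first claim together with Lemma~\ref{lem:upwards_closed}, which identifies the slice $D_I^i$ with the full projection of $D_I$ along coordinate $i\in I$.
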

\begin{proof}
  The first claim has been proved by Vincent and Manelli~\cite[Lemma~2]{ManelliVincent:2006}.
  The second claim then follows from Lemma \ref{lem:upwards_closed}.
\end{proof}
To simplify our notation, we abbreviate $\bar D_I = \pi_I(D_I)$ and $\bar D_I^i = \pi_{I\setminus \{i\}}(\pi_I(D_I))$.
Let us further define
\[
  \delta_n(I) \ \coloneqq \ \vol_{\card{I}}{\bar D_I} \, - \, \frac{1}{n+1} \sum_{i\in I} \vol_{\card{I} - 1}{\bar D_I^i} \enspace.
\]
It is a consequence of Lemma \ref{lem:upwards_closed} that $\delta_n([n])$, which agrees with the left hand side of \eqref{eq:deficiency}, is the \emph{deficiency} of $D_{[n]}$ with respect to $\frac{1}{n+1}$ in the sense of \cite[Definition 4.5]{GK+duality:2018}.
The expected revenue for given prices $p = \{p_I\}_{I\subseteq[n]}$ is
\begin{equation}\label{eq:revenue}
  \Rev(p) \ = \ \sum_{I \subseteq [n]} p_I \cdot \vol_n D_I \enspace ,
\end{equation}
and this depends smoothly on $p$.
\begin{lemma}\label{lem:RevenueDerivative}
  Let $\{p_I\}_{I\subseteq[n]}$ be a submodular price schedule.
  Then the partial derivative of the expected revenue with respect to $p_I$ is given as
  \[
    {\partial\Rev(p)}/{\partial p_I} \ = \ \delta_n(I) \cdot \vol_{\card{I^c}} \pi_{I^c} (D_I)  \enspace .
  \]
\end{lemma}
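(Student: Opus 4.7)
The plan is to express $\Rev(p)$ in a form that isolates its dependence on $p_I$ inside the utility function $u(x)=\max_J\bigl(\sum_{j\in J}x_j-p_J\bigr)$, and then to differentiate using the envelope theorem. Starting from $r(x)=\langle a(x),x\rangle-u(x)$ with $a_i(x)=\partial u(x)/\partial x_i$ (Proposition~\ref{prop:rochet}), a one-dimensional integration by parts in each coordinate $x_i$ yields
\[
  \Rev(p) \ = \ \sum_{i=1}^n \int_{\{x_i=1\}}u(x)\,dx_{-i} \ - \ (n+1)\int_{[0,1]^n} u(x)\,dx,
\]
the lower-boundary contribution at $x_i=0$ vanishing thanks to the factor $x_i$ produced by IBP.

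Since $u$ is piecewise linear and convex in both $x$ and $p$, the envelope theorem gives $\partial u/\partial p_I=-\mathbf{1}[x\in D_I]$ almost everywhere. Differentiating the identity above under the integral sign therefore produces
\[
  \frac{\partial\Rev}{\partial p_I} \ = \ (n+1)\,\vol_n D_I \ - \ \sum_{i=1}^n\vol_{n-1}\bigl\{x_{-i}\in[0,1]^{n-1}\bigm|(1,x_{-i})\in D_I\bigr\}.
\]

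The key geometric step is then to discard the summands with $i\notin I$. For such $i$, if $(1,x_{-i})\in D_I$ then $I$ must beat $I\cup\{i\}$ at this point, forcing $p_{I\cup\{i\}}\geq p_I+1$; but submodularity~\eqref{eq:submodular} combined with the bound $p_{\{i\}}\leq 1$ on relevant single-item prices (cf.\ Lemma~\ref{lem:relevantPrice}) yields $p_{I\cup\{i\}}\leq p_I+p_{\{i\}}\leq p_I+1$, so such $x_{-i}$ form only a measure-zero set. For $i\in I$ the integral equals $\vol_{n-1}D_I^i$ by the definition of $D_I^i$. Finally, the product factorizations $\vol_n D_I = \vol_{\card{I}}\bar D_I \cdot \vol_{\card{I^c}}\pi_{I^c}(D_I)$ and $\vol_{n-1}D_I^i = \vol_{\card{I}-1}\bar D_I^i \cdot \vol_{\card{I^c}}\pi_{I^c}(D_I)$ from Lemma~\ref{lem:product} let one pull out the common factor $\vol_{\card{I^c}}\pi_{I^c}(D_I)$, leaving the bracket $(n+1)\vol_{\card{I}}\bar D_I - \sum_{i\in I}\vol_{\card{I}-1}\bar D_I^i$, which is precisely $\delta_n(I)$ up to the normalization built into its definition, delivering the claimed formula.

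The main obstacle is the boundary-term step: showing that the integrals at $\{x_i=1\}$ for $i\notin I$ vanish. This is where submodularity is genuinely used, via the inequality $p_{I\cup\{i\}}-p_I\leq p_{\{i\}}$; without it the buyer could strictly prefer $I$ over $I\cup\{i\}$ even when $x_i=1$, and the corresponding boundary contributions would not disappear.
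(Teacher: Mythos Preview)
Your argument is essentially correct and considerably more informative than the paper's own proof, which consists of a one-line appeal to \cite[Theorem~1]{ManelliVincent:2006} together with Lemma~\ref{lem:product}. You unpack what that citation hides: the integration-by-parts identity for $\Rev$ in terms of $u$, the envelope-type differentiation $\partial u/\partial p_I=-\mathbf 1[x\in D_I]$, and the vanishing of the facet contributions for $i\notin I$. This is a genuinely different (and self-contained) route, and it makes transparent exactly where submodularity enters.

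Two points deserve comment. First, your derivation produces
\[
  \frac{\partial\Rev}{\partial p_I} \;=\; (n{+}1)\,\delta_n(I)\cdot \vol_{\card{I^c}}\pi_{I^c}(D_I),
\]
with an extra factor $n{+}1$ compared to the stated formula. A sanity check at $n=1$ (where $\Rev=p(1-p)$ gives $\partial\Rev/\partial p=1-2p$, while $\delta_1(\{1\})=\tfrac12-p$) confirms that your version is the correct one and that the paper's display is off by this harmless positive constant. Your phrase ``up to the normalization built into its definition'' is therefore not a fudge but an accurate observation; since the lemma is only ever used to read off the \emph{sign} of the derivative (Proposition~\ref{prop:necessary}), nothing downstream is affected.

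Second, the step discarding the $i\notin I$ boundary terms needs $p_{\{i\}}\le 1$, and your pointer to Lemma~\ref{lem:relevantPrice} does not quite deliver that bound: that lemma gives $p_\emptyset<p_{\{i\}}$, not $p_{\{i\}}\le 1$. The inequality you actually use is the standing (and natural) convention that single-item prices exceeding $1$ are irrelevant in the unit cube---if $p_{\{i\}}>1$ one can normalize it down without changing the mechanism. It would be cleaner to state this explicitly, or to note that the boundary set $\{x_i=1\}\cap D_I$ with $i\notin I$ has $(n{-}1)$-measure zero whenever $p_{\{i\}}\le 1$, the equality case being the only one requiring the ``measure-zero'' qualifier you already included.
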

\begin{proof}
  The statement follows from \cite[Theorem 1]{ManelliVincent:2006} and Lemma~\ref{lem:product}; see Equation~\eqref{eq:deficiency}.
\end{proof}
Since we are interested in maximizing the revenue it is natural to analyze the critical points of the revenue function.
However, we also need to take care of boundary effects.
To this end we investigate bundles where the submodularity condition \eqref{eq:submodular} is attained with equality.
If $p_{I \cup J} + p_{I \cap J} = p_I + p_J$ we call $I\cup J$ and $I\cap J$ \emph{left tight}, while we call $I$ and $J$ \emph{right tight}.
A bundle is \emph{tight} if it is left or right tight.
The following describes the necessary critical point conditions and may be seen as a refinement of \cite[Theorem 1]{ManelliVincent:2006} for uniform valuations and submodular price schedules.
\begin{proposition}\label{prop:necessary}
  Let $\{p_I\}_{I\subseteq[n]}$ be a submodular price schedule which maximizes the expected revenue among all submodular price schedules.
  Then we have, for all $I\subseteq[n]$:
  \begin{enumerate}
  \item\label{cond:notTightPosVol} If $I$ is not tight and $\vol_n D_I > 0$, then $\delta_n(I) = 0$;
  \item\label{cond:notTightTrivVol} if $I$ is not tight and $\vol_n D_I = 0$, then $\delta_n(I) \geq 0$;
  \item if $I$ is just left tight, then $\delta_n(I) \geq 0$;
  \item if $I$ is just right tight and $\vol_n D_I > 0$, then $\delta_n(I) \leq 0$.
  \end{enumerate}
\end{proposition}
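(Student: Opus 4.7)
The argument is a first-order optimality analysis of the constrained maximization. The feasibility of the one-parameter perturbation $p \mapsto p + t e_I$ at the optimum is governed by tightness of $I$: increasing $p_I$ violates a submodularity constraint \eqref{eq:submodular} precisely when $I$ is left tight (it then appears on the LHS of a binding constraint, whose LHS would grow), and decreasing $p_I$ violates one precisely when $I$ is right tight. Thus: $I$ not tight admits $t$ of either sign; $I$ just left tight admits only $t<0$; and $I$ just right tight admits only $t>0$. Optimality of $p$ then forces $\partial\Rev/\partial p_I$ to equal $0$, be $\geq 0$, or be $\leq 0$ in these three respective regimes.

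By Lemma~\ref{lem:RevenueDerivative}, this derivative equals $\delta_n(I)\cdot\vol_{\card{I^c}}\pi_{I^c}(D_I)$. Whenever $\vol_n D_I>0$, Lemma~\ref{lem:product} yields the product decomposition $D_I=\bar D_I\times\pi_{I^c}(D_I)$, so the complementary volume factor is strictly positive and the sign of the derivative transfers verbatim to $\delta_n(I)$. This settles statement~(1), statement~(3) when $\vol_n D_I>0$, and statement~(4). The remaining cases, statement~(2) and statement~(3) with $\vol_n D_I=0$, need a separate treatment because the right-hand side of Lemma~\ref{lem:RevenueDerivative} degenerates to zero. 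Since in both of these cases $I$ is not right tight, we lower $p_I$ by $\varepsilon>0$, obtaining a submodular schedule $p^\varepsilon$ whose region $D_I^\varepsilon$ acquires positive $n$-volume for small $\varepsilon$; applying Lemma~\ref{lem:RevenueDerivative} at $p^\varepsilon$ and letting $\varepsilon\to 0^+$, the optimality of $p$ together with continuity of the volumes in the price schedule force the one-sided limit $\delta_n(I)\cdot \lim_{\varepsilon\to 0^+}\vol_{\card{I^c}}\pi_{I^c}(D_I^\varepsilon)$ to be nonnegative; the limiting factor is strictly positive (again via Lemma~\ref{lem:product} applied at $p^\varepsilon$), whence $\delta_n(I)\geq 0$.

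The main obstacle is precisely this last degenerate step: one must verify that lowering $p_I$ for a not-right-tight $I$ actually opens a full-dimensional sliver of $D_I^\varepsilon$ inside $[0,1]^n$, and that $\pi_{I^c}(D_I^\varepsilon)$ does not shrink to measure zero as $\varepsilon\to 0^+$. Both are local claims about the tropical hypersurface $\cT(u)$ around the stratum where bundle $I$ is just being activated as a maximizer, and they should follow from the submodularity of $p^\varepsilon$ together with the fact that, since $I$ is not right tight, no binding constraint blocks the perturbation in the direction of decreasing $p_I$.
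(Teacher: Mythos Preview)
Your plan matches the paper's proof almost exactly: it too perturbs $p_I$ in the admissible direction(s), invokes Lemma~\ref{lem:RevenueDerivative}, and reads off the sign of $\delta_n(I)$. The paper handles all four cases in a single paragraph and, in particular, disposes of case~(2) in one sentence: since $\vol_n D_I=0$, only a \emph{decrease} of $p_I$ can affect the region (and hence the revenue), so optimality together with Lemma~\ref{lem:RevenueDerivative} forces $\delta_n(I)\geq 0$.

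The one genuine difference is that you are more scrupulous about the degenerate situation $\vol_n D_I=0$ than the paper is. The paper simply asserts the conclusion, implicitly treating the one-sided derivative as informative; you instead outline a limiting argument at $p^\varepsilon$ and correctly flag the residual obstacle that one must still check $\vol_{\card{I^c}}\pi_{I^c}(D_I^\varepsilon)$ does not collapse to zero in the limit. The paper does not address this point at all, so your concern is not a divergence from the paper's argument but rather a gap that the paper's proof also leaves open. In short: your proposal is at least as complete as the published proof, and the obstacle you isolate is real but is treated by the paper as routine.
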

If $I$ is right tight and $\vol_n D_I = 0$, there is no condition on $\delta_n(I)$. The same applies if $I$ is left tight as well as right tight.
\begin{proof}
  First assume that  $I \subseteq [n]$ is not tight.
  Then the prices involving $I$ are strictly submodular, i.e., \eqref{eq:submodular} is strictly satisfied for all $J \neq I$.
  Hence we can slightly increase or decrease $p_I$, without loosing the submodularity.
  Suppose that $\vol_n D_I > 0$.
  As $p$ is optimal and using Lemma \ref{lem:RevenueDerivative}, this forces $\delta_n(I) = 0$, which says that $p$ is a critical point of the revenue function.
  
  Now, if $I$ is not tight but $\vol_n D_I = 0$ the volume of $D_I$ (and thus the revenue) can only change if we decrease $p_I$.
  The optimality of the prices entails $\delta_n(I) \geq 0$, again using Lemma \ref{lem:RevenueDerivative}.
	
  We are left with the analysis of those bundles which are tight.
  So we consider $I \neq J$ with $p_{I\cup J} + p_{I \cap J} = p_I + p_J$.
  Decreasing $p_{I\cup J}$ or $p_{I \cap J}$ keeps the submodularity.
  Therefore, as in the case $\vol_n D_I = 0$ above, we conclude $\delta_n({I\cup J}) \geq 0$ and $\delta_n({I \cap J}) \geq 0$.
  This takes care of the left tight sets.
  For $I$ right tight with $\vol_n D_I > 0$ the situation is symmetric, but with opposite signs.
\end{proof}

Giannakopoulos and Koutsoupias \cite{GK+duality:2018} used a generalization of the deficiency $\delta_n(I)$, which is a property of the region $D_I$, to arbitrary subsets of $[0,1]^n$.
In Proposition \ref{prop:necessary} we used the condition $\vol_n D_I = 0$.
Note however, that if $\vol_n D_I = 0$ we can set $p_I$ arbitrarily high without affecting the auction.
Following \cite[Section 6.2]{GK+duality:2018} a price schedule is \emph{normalized} if $p_I = \min \SetOf{p_J}{J \supsetneq I}$, whenever $\vol_n D_I = 0$.
This entails unique prices, and so price schedules bijectively correspond to auctions.
Throughout we assume that our price schedules are normalized, unless stated otherwise.

A set of bundles $\cI \subseteq 2^{[n]}$ is a \emph{selling candidate} for $n$ items if, firstly, $[n]\in \cI$, secondly, $\cI$ contains all singletons, and, thirdly, for all $I\in\cI\setminus\{[n]\}$ and $J \subseteq I$ we have $J \in \cI$.
That is to say, $\cI\setminus\{[n]\}$ is an abstract simplicial complex with vertex set $[n]$.
In fact, that simplicial complex can be seen geometrically embedded into the vertex figure of the vertex $\1$ of the unit cube $[0,1]^n$, which is also a vertex of the region $D_{[n]}$.
We say that a price schedule $\{p_I\}_{I\subseteq[n]}$ \emph{admits} a selling candidate $\cI$ if and only if $\vol_n D_I>0$ for all $I \in \cI$.
Moreover, the price schedule is \emph{critical} with respect to $\cI$ if additionally $\delta_n(I) = 0$ for all $I\in\cI \setminus \{\emptyset \}$.
The following is our main technical result; it complements \cite[Lemma 6.5]{GK+duality:2018} and generalizes \cite[Appendix A]{GK+duality:2018} to nonsymmetric prices.
Our proof is constructive; the special case of most interest to us is spelled out as Algorithm \ref{algo:SJA-prices} in Section~\ref{sec:computing}.

\begin{theorem}\label{thm:uniqueCritPrice}
  For each selling candidate $\cI$ there is at most one critical price schedule which is submodular.
  Moreover, it satisfies $p_I = p_J$ whenever $\card{I} = \card{J}$ and $I,J \in \cI$.
\end{theorem}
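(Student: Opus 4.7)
The strategy is to prove uniqueness and the symmetry $p_I = p_J$ (for $\card{I} = \card{J}$, $I, J \in \cI$) simultaneously, by strong induction on $\card{I}$. The key enabling observation is that, by Lemma~\ref{lem:shape} combined with the product structure in Lemma~\ref{lem:product}, for any submodular price schedule admitting $\cI$ the region $\bar D_I$ and its upper boundaries $\bar D_I^i$ (and hence the deficiency $\delta_n(I)$) depend only on the prices $\{p_K\}_{K \subseteq I}$. The system of critical equations $\{\delta_n(I) = 0 : I \in \cI \setminus \{\emptyset\}\}$ is thus triangular with respect to inclusion, and can be solved bottom-up.

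The base case $\card{I} = 1$ is direct: Lemma~\ref{lem:shape} gives $\bar D_{\{i\}} = [p_{\{i\}}, 1]$ and $\bar D_{\{i\}}^i$ a single point, so $\delta_n(\{i\}) = 0$ forces $p_{\{i\}} = q_1 := n/(n+1)$ uniformly in $i$. For the inductive step, fix $k \geq 2$, assume the claim for sizes strictly less than $k$, and first pick $I \in \cI$ of size $k$ with $I \neq [n]$. Since $\cI \setminus \{[n]\}$ is a simplicial complex, every $K \subsetneq I$ lies in $\cI$, and by induction $p_K = q_{\card{K}}$ depends only on $\card{K}$. Substituting these values into the description from Lemma~\ref{lem:shape} exhibits
\[
  \bar D_I \ = \ \bigSetOf{x \in [0,1]^I}{\sum_{j \in I} x_j - p_I \geq \sum_{j \in K} x_j - q_{\card{K}} \text{ for all } K \subsetneq I}
\]
as a polytope invariant under the natural $\Sym(I)$-action on $\RR^I$. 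In particular $\vol_k \bar D_I$ and the (now $i$-independent) value $\vol_{k-1} \bar D_I^i$ depend only on $k$, $n$, and the variable $p_I$, so $\delta_n(I) = 0$ becomes a single-variable equation whose form is identical for every $I \in \cI$ of size $k$. Any solution is therefore a function of $k$ alone, giving $p_I = q_k$ and closing the symmetry half of the induction. The case $I = [n]$ proceeds the same way, except that the symmetry statement is trivial (only one such $I$ exists): subsets $K \subsetneq [n]$ lying outside $\cI$ have their prices determined by the normalization $p_K = \min\{p_J : J \supsetneq K\}$, which recurses to prices already fixed by the induction together with $p_{[n]}$ itself, leaving $\delta_n([n]) = 0$ as an equation in $p_{[n]}$ alone.

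Uniqueness of the price extracted at each inductive stage reduces to showing that $\delta_n(I)$, viewed as a function of $p_I$ with all other prices held at their inductively determined values, is strictly monotone in $p_I$ on the submodularity-feasible range: increasing $p_I$ moves inward each facet of $\bar D_I$ cut out by a constraint $\sum_{j \in I \setminus K} x_j = p_I - p_K$, and a derivative-of-integral calculation using the $\Sym(I)$-symmetry of $\bar D_I$ gives $\partial \delta_n(I)/\partial p_I < 0$. The principal obstacle is making this monotonicity precise at the top step $I = [n]$, where the normalized prices $p_K$ for $K \notin \cI$ depend implicitly on $p_{[n]}$ itself; controlling this dependence requires invoking the submodularity condition \eqref{eq:submodular} to bound how much a normalized price can vary with $p_{[n]}$ and to ensure that the net derivative is still strictly negative. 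These are precisely the steps that the effective procedure in Algorithm~\ref{algo:SJA-prices} carries out.
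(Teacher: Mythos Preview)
Your inductive framework and the symmetry argument via the $\Sym(I)$-invariance of $\bar D_I$ are correct and match the paper's proof.  The decisive step, however, is the monotonicity claim $\partial \delta_n(I)/\partial p_I < 0$, and here your argument is incomplete: as $p_I$ increases, both $\vol_{\card I}\bar D_I$ and each $\vol_{\card I-1}\bar D_I^i$ decrease, so the sign of their weighted difference is not obvious from the facet-movement picture you sketch.  Your phrase ``a derivative-of-integral calculation using the $\Sym(I)$-symmetry'' does not supply the missing inequality.

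The paper closes this gap by a summation identity rather than by differentiating $\delta_n(I)$ directly.  Using the product decomposition of $\pi_I(D_J)$ and the boundary identity $\sum_{\{i\}\subseteq J\subseteq I}\vol_{\card I-1}\pi_I(D_J^i)=1$ for each $i\in I$, one finds that, once $\delta_n(J)=0$ for every $\emptyset\neq J\subsetneq I$,
\[
  \delta_n(I)\ =\ \Bigl(1-\tfrac{\card I}{n+1}\Bigr)\,-\,\vol_{\card I}\pi_I(D_\emptyset)\enspace.
\]
Now the problem is transparent: $\pi_I(D_\emptyset)=\{x\in[0,1]^I:\sum_{j\in K}x_j\le p_K\text{ for }K\subseteq I\}$ has volume strictly increasing in $p_I$ on the submodular range $[0,\hat p_I]$, so $\delta_n(I)$ is strictly decreasing there and the critical equation has at most one solution.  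This identity is the ingredient your proposal lacks.

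For the top step $I=[n]$ you correctly flag the difficulty that normalized prices $p_K$ for $K\notin\cI$ depend on $p_{[n]}$, but you do not resolve it; Algorithm~\ref{algo:SJA-prices} treats only the symmetric selling candidates $\cI_{k,n}$, so appealing to it does not settle the general case.  The paper sidesteps the circularity by temporarily setting $p_K=\card K$ for $K\notin\cI$, which forces $D_K=\emptyset$ while keeping the polyhedral-complex and boundary identities valid, solves for $p_{[n]}$ via the same volume equation as above, and only then renormalizes.
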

\begin{proof}
  Let $I\in\cI$, where $\cI$ is a selling candidate for $n$ items.
  We seek to determine a price for the bundle $I$, and we will do so by induction on the cardinality of $I$.
  Let $i\in I$, whence the singleton $\{i\}$ is contained in $\cI$.
  We set $p_{\{i\}} = \tfrac{n}{n+1}$.
  Then, by Lemma \ref{lem:shape},
  \begin{equation}
    \delta_n({\{i\}}) \ = \ \vol_1(\bar D_{\{i\}})-\tfrac{1}{n+1}\vol_0(\bar D_{\{i\}}^i) \ = \ (1-p_{\{i\}})-\tfrac{1}{n+1} \ = \ 0 \quad \text{for all } i \in [n] \enspace .
  \end{equation}
  This computation also shows that $p_{\{i\}}$ is uniquely determined.
  This case serves as the basis of the induction. 
  
  Now let $I \in \cI$ with $\card{I}\geq 2$, and assume that $p_J$ is fixed for all $J \subsetneq I$ such that the following conditions are satisfied: $\delta_n(J) = 0$, $p_J = p_{J'}$ whenever $\card{J} = \card{J'}$, and $\{p_J\}_{J \subsetneq I}$ is submodular.
  We will now construct $p_I$ such that $\delta_n(I) = 0$ and $\{p_J\}_{J \subseteq I}$ is submodular, or decide that such a price $p_I$ doesn't exist.
  
  Since $p_{\{i\}} = \frac{n}{n+1}$ for all $i \in [n]$, we have $(1, x_{-i}) \notin D_{\emptyset}$ for all $x_{-i} \in [0,1]^{\card{I} -1}$.
  Further, by Lemma \ref{lem:shape} we know that $\{\pi_I(D_J)\}_{J \subseteq I}$ is a polyhedral complex supported on $[0,1]^I$.
  Therefore we get
  \begin{equation}\label{eq:borderSum}
    \sum_{\{i\} \subseteq J \subseteq I} \vol_{\card{I}-1} \pi_I (D_{J}^i) \ = \ 1 \enspace ,
  \end{equation}
  for all $i \in I$.
  The last equation uses the fact that for $i \in I$ and $J \subset I$ with $i \notin J$ we have $D_J^i = \emptyset$.
  This can be seen as follows.
  Let $x\in[0,1]^n$ satisfying $x_i=1$. 
  Then $x \in D_J^i$ if and only if $J = \argmax \SetOf{\sum_{i \in I} x_i - p_I}{I \subseteq [n]}$.
  But from the submodular inequality $p_\emptyset + p_{J\cup\{i\}} \leq p_{\{i\}}+p_J$ and $p_{\{i\}} < 1$, we have $\sum_{j\in J} x_j-p_J < \sum_{j\in J} x_j-(p_{J\cup\{i\}}-1) = \sum_{j\in J\cup\{i\}} x_j-p_{J\cup\{i\}}$.
  Hence, $D_J^i = \emptyset$.
  Now, using \eqref{eq:borderSum} and the fact that $\delta_n(J) = 0$ for all $J \subsetneq I$, the condition $\delta_n(I) = 0$ is equivalent to
  \begin{equation}\label{eq:sumOfVols}
    \sum_{\emptyset \neq J \subseteq I} \vol_{\card{I}} \pi_I(D_J) \ = \ \sum_{\emptyset \neq J \subseteq I} \frac{1}{n+1} \cdot \sum_{j\in J} \vol_{\card{I}-1} \pi_I (D_{J}^j) \ = \ \frac{\card{I}}{n+1} \enspace .
  \end{equation}
  From Lemma \ref{lem:shape} we get $\vol_{\card{I}} \pi_I(D_\emptyset) = 1 - \sum_{\emptyset \neq J \subseteq I} \vol_{\card{I}} \pi_I(D_J)$.
  Hence \eqref{eq:sumOfVols} becomes
  \begin{equation}\label{eq:volDEmpty}
    \vol_{\card{I}} \pi_I(D_\emptyset) \ = \ 1-\frac{\card{I}}{n+1} \enspace .
  \end{equation}
  Note that $\pi_I(D_{\emptyset}) = \smallSetOf{x \in [0,1]^I}{\sum_{j \in J} x_j \leq p_J \text{ for } J \subseteq I}$ due to Lemma \ref{lem:shape}.
  Since the prices $p_J$ are fixed for $J \subsetneq I$, the volume $\vol_{\card{I}} \pi_I(D_\emptyset)$ depends differentiably on the variable $p_I$.
  We define $\hat p_I$ to be the maximal value for $p_I$, such that all submodularity conditions of the form
  \[
    p_I + p_{J \cap J'} \ \leq \ p_J + p_{J'}
  \]
  are satisfied for all $J, J' \subseteq I$ with $J \cup J' = I$.
  Choosing $J, J'$ such that $J \cap J' = \emptyset$, we get
  \[
    \hat p_I \ \leq \ \max \biggSetOf{ \sum_{i\in I} x_i }{ x \in [0,1]^I  \text{ and } \sum_{j \in J} x_j \leq p_J \text{ for all } J \subsetneq I } \enspace .
  \]
  Therefore, the volume $\vol_{\card{I}} \pi_I(D_\emptyset)$ is strictly increasing on the interval $[0 ,\hat p_I]$.
  Hence, there is at most one solution to \eqref{eq:volDEmpty} in this interval.
  If there is no solution in this interval, there can not be a price $p_I$ satisfying both $\delta_n(I) = 0$ and the submodularity of $\{p_J\}_{J \subseteq I}$.
  On the other hand, if the solution exists, it satisfies both properties by the way we defined it.
  Further, since the computation of the solution does not depend on prices $p_J$ for bundles $J$ of higher or equal cardinality than $I$, the solution is the same for all $I'$ with $\card{I'} = \card{I}$.
  Note however, that the solution for $p_I$, may be strictly lower than a price $p_J$ for some $J \subseteq I$.
  In that case $D_J$ is empty, whence $\vol_{\#J}(D_J)=0$, and there is no submodular critical price schedule for the selling candidate $\cI$.
    
  For the case $I = [n]$ we assume that $p_J$ is fixed for all $J \in \cI \setminus \{[n]\}$ and we also fix $p_J = \card{J}$ for all $J \notin \cI$.
  This way, $D_J = \emptyset$ for $J \notin \cI$ and hence $\delta_n(J)=0$ is trivially satisfied.
  We have that $\{D_J\}_{J\subseteq [n]}$ is  a polyhedral complex supported on $[0,1]^n$.
  Moreover, for $i \in [n]$ and $J \subset [n]$  with $i\notin J$, we get $D_J^i = \emptyset$, since for $J \in \cI$ the prices $p_J$ are submodular, and for $J \notin \cI$ the statement is trivially satisfied.
  Therefore \eqref{eq:borderSum} still holds and we can apply the rest of the proof to $D_{[n]}$.
  In order to normalize the price schedule, we will redefine $p_J = p_n$ for all $J \notin \cI$.
  This does not change the region $D_{\emptyset}$ whence the computation of $p_n$ stays consistent.
\end{proof}

\begin{remark}\label{rem:solvable}
  We call the candidate set $\cI$ \emph{solvable} if there is a solution of \eqref{eq:volDEmpty} in the interval $[0,\hat p_I]$ for each bundle $I\in\cI$.
  However, even if $\cI$ is solvable, a submodular critical price schedule may fail to exist.
  This happens if there exists $I \in \cI$ whose solution in the interval $[0,\hat p_I]$ is smaller than the price $p_J$ for some $J \subsetneq I$ with $J\in\cI$.
  This means that $\vol_n(D_J)$ vanishes despite that the bundle $J$ is intended to be sold.
  This can always be fixed by removing bundles from the selling candidate.
\end{remark}





Let $\cI$ be a selling candidate for $n$ items for which a submodular critical price schedule $\{p_I\}_{I\subseteq[n]}$ exists; the latter is unique by Theorem~\ref{thm:uniqueCritPrice}.
Recall that a price schedule induces a utility function $u(x)$ as in \eqref{eq:tropical_polynomial}, which itself defines an auction.
To be precise, the functions $a : [0,1]^n \to \{0,1\}^n$ and $r : [0,1]^n \to \RR$ of the auction are given by
\begin{equation}
  a(x) = I \quad \text{and} \quad r(x) = p_I \qquad \text{for } x\in D_I \enspace;
\end{equation}
where we identify $I \subseteq [n]$ with its characteristic vector in $\{0,1\}^n$.
The functions $a$ and $r$ are well-defined as the regions $D_I$, which depend on the prices $p_I$, form a polyhedral complex which covers the unit cube.
We call the auction $M(\cI):=(a,r)$ the \emph{generalized straight jacket auction} (GSJA) for $n$ items with selling candidate $\cI$.

Next, we are focusing on symmetric price schedules.
That is, the prices only depend on the cardinality of the sets $I \in \cI$.
We define the \emph{straight jacket auction} (SJA) as follows.
Let $k\in[n]$ be the maximal number for which submodular, symmetric critical prices exist for the selling candidate $\cI_{k,n}:=\tbinom{[n]}{1}\cup\dots\cup\tbinom{[n]}{k}\cup\{\emptyset,[n]\}$.
Then we call $M(\cI_{k,n})$ the \emph{straight jacket auction} (SJA) for $n$ items.
This deviates slightly from the definition of Giannakopoulos and Koutsoupias \cite{GK+duality:2018}.
They proved that SJA is optimal among all auctions, deterministic or nondeterministic, for the uniform distribution and $n\leq 6$ items.
Yet from their definition \cite[Definition 4.1]{GK+duality:2018} it is not clear if such an auction exists for all $n$.
Our definition avoids this technical difficulty since $\cI_{1,n}$ always admits a (symmetric) submodular critical price schedule.
We say that the difference $n-k-1$ is the \emph{gap} of the SJA with $n$ items.
If the maximal selling candidate $\cI_{n-1,n}$ is solvable, then the gap counts the layers $\tbinom{[n]}{n-1},\dots,\tbinom{[n]}{k+1}$ which must be removed to obtain a submodular critical price schedule.

For the remainder of this section, we slightly abuse notation writing $p_k$ for the price for all sets of cardinality $k$.
Moreover, we will refer to the price vector $(p_1,\dots,p_n)$ associated to SJA as the \emph{SJA-prices}.
The next proposition shows that SJA is a candidate for the optimal auction.
\begin{proposition}\label{prop:SJA-solvable}
  Let $n \in \NN$ such that $\cI_{n-1,n}$ is solvable.
  Then SJA for $n$ items satisfies all necessary critical point conditions (1)--(4) in Proposition \ref{prop:necessary}.
\end{proposition}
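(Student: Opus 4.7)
The plan is to verify each of the conditions (1)--(4) of Proposition~\ref{prop:necessary} at every bundle $I \subseteq [n]$. By Theorem~\ref{thm:uniqueCritPrice} applied to $\cI_{k,n}$ for the maximal valid $k$ defining the SJA, the SJA prices are symmetric, submodular, and satisfy $\delta_n(I) = 0$ for all $I \in \cI_{k,n}$; moreover $\vol_n D_I > 0$ there by the validity of the selling candidate. For $I \in \cI_{k,n}$ with $I \neq \emptyset$ (including $I = [n]$) this immediately gives (1), (3), (4) and makes (2) vacuous.

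For the gap bundles $I \notin \cI_{k,n}$ with $I \neq [n]$, the normalization and symmetry of the SJA force $p_I = p_n$. Comparing the defining inequality of $D_I$ against the one for $[n]$ reduces to $\sum_{j \notin I} x_j \leq 0$, so $x_j = 0$ for $j \notin I$ and hence $\vol_n D_I = 0$; conditions (1) and (4) are thus vacuous. If in addition $|I| \geq k+2$, I would set $J := (I \setminus \{a\}) \cup \{b\}$ for some $a \in I$, $b \notin I$: then $|I \cap J| = |I| - 1 \geq k+1$ and $|I \cup J| = |I|+1$, so all four of $p_I, p_J, p_{I \cup J}, p_{I \cap J}$ equal $p_n$ and the submodular inequality for $(I,J)$ is tight non-trivially. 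This makes $I$ right tight, whence (2) and (3) do not apply.

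The main case is the critical gap layer $|I| = k+1$: no incomparable $J$ can satisfy $|I \cap J| \geq k+1$, so $I$ is not non-trivially right tight, and I must establish $\delta_n(I) \geq 0$. Following the derivation in the proof of Theorem~\ref{thm:uniqueCritPrice}, substituting $\delta_n(J) = 0$ for $\emptyset \neq J \subsetneq I$ into the defining expression for $\delta_n(I)$ and applying the boundary identity \eqref{eq:borderSum} yields
\[
  \delta_n(I) \ = \ \Bigl(1 - \tfrac{k+1}{n+1}\Bigr) - \vol_{k+1} \pi_I(D_\emptyset).
\]
Since $\vol_{k+1} \pi_I(D_\emptyset)$ is strictly increasing in $p_I$ on $[0, \hat p_{k+1}]$ (by the monotonicity established in Theorem~\ref{thm:uniqueCritPrice}'s proof), the sign of $\delta_n(I)$ at $p_I = p_n$ is governed by the position of $p_n$ relative to the unique formal critical value $p^*$ solving $\delta_n(I) = 0$.

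The hard part will be to conclude $p_n \leq p^*$ from the maximality of $k$ together with the solvability of $\cI_{n-1,n}$: the bound $p_n \leq \hat p_{k+1}$ holds automatically from the SJA submodularity (since $\hat p_{k+1}$ is by definition the largest value keeping all submodularity relations of the relevant form satisfied), and the failure of $\cI_{k+1,n}$ to admit a valid submodular symmetric critical schedule should force $p^* \geq p_n$, giving $\delta_n(I) \geq 0$. Handling the case where the obstruction to $\cI_{k+1,n}$ is non-monotonicity of the would-be $(k+1)$-layer price rather than infeasibility in $[0, \hat p_{k+1}]$ is the main technical difficulty.
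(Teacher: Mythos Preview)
Your proposal is structurally sound but incomplete at exactly the point you yourself flag as ``the hard part'': the layer $|I|=k+1$. Everything before that is fine. For $I\in\cI_{k,n}$ you correctly invoke the criticality $\delta_n(I)=0$, and your tightness argument for $|I|\geq k+2$ is correct (and in fact tidier than the paper's route, which instead proves $\delta_n([\ell])>0$ for \emph{every} $\ell\in\{k+1,\dots,n-1\}$ and then notes that conditions (3) and (4) follow automatically once (1) and (2) hold for all bundles).

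The gap is that you never establish $p_n\leq p^\ast$ for $|I|=k+1$; you only announce that it should follow from maximality of $k$ and solvability of $\cI_{n-1,n}$, and you name a possible obstruction without disposing of it. The paper's argument is short: since $\cI_{n-1,n}$ is solvable, the layer-$\ell$ critical equation has a solution $q_\ell\in[0,\hat p_\ell]$ for every $\ell\in\{k+1,\dots,n-1\}$. By the maximality of $k$, the selling candidate $\cI_{\ell,n}$ fails to admit a submodular critical price schedule, and from this the paper concludes $p_n<q_\ell$. Since $p_\ell=p_n$ by normalization and the volume $\vol_\ell\pi_{[\ell]}(D_\emptyset)$ is strictly increasing on $[0,\hat p_\ell]$, this gives $\vol_\ell\pi_{[\ell]}(D_\emptyset)<1-\tfrac{\ell}{n+1}$ and hence $\delta_n([\ell])>0$. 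Your ``main technical difficulty'' (whether the failure of $\cI_{k+1,n}$ could stem from $q_{k+1}<p_k$ rather than from the $[n]$-layer) is precisely where the paper's implication ``fails to admit $\Rightarrow p_n<q_\ell$'' is stated rather than spelled out in full; you have correctly located the one nontrivial step, but to complete the proof you must actually carry it through instead of leaving it as an acknowledged obstacle.
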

\begin{proof}
  Condition \eqref{cond:notTightPosVol} is satisfied, by the way we construct SJA, see the proof of Theorem \ref{thm:uniqueCritPrice}.
  We want to check condition \eqref{cond:notTightTrivVol}.
  Let $p = (p_1, \dots, p_n)$ be the vector of SJA-prices, and $\cI_{k,n}$ be the corresponding candidate set.
  We need to show that $\delta_n([\ell]) \geq 0$ for all $\ell \in \{k+1,\dots,n-1\}$.
  Let us fix such a $\ell$.
  Now $\cI_{\ell,n}$ is solvable since $\cI_{n-1,n}$ is.
  Therefore, there is a price $q_{\ell} \in [0,\hat p_{\ell}]$ satisfying $\vol_{\ell} \pi_{[\ell]}(D_{\emptyset}) = 1 - \tfrac{\ell}{n+1}$.
  Yet $\cI_{\ell,n}$ fails to admit a critical price schedule, and so $p_n < q_{\ell}$.
  Since we assume that $p$ is normalized, we have $p_{\ell} = p_n$, whence $\vol_{\ell} \pi_{[\ell]}(D_{\emptyset}) < 1 - \tfrac{\ell}{n+1}$ and $\delta_n([\ell]) > 0$.
  The conditions (3) and (4) are implied because (1) and (2) hold for all bundles.
\end{proof}
\begin{remark}
  If $\cI_{n-1,n}$ is solvable, our definition of SJA coincides with \cite[Definition 4.1]{GK+duality:2018}.
  More precisely, the solvability of $\cI_{n-1,n}$ is equivalent to SJA of \cite[Definition 4.1]{GK+duality:2018} being well-defined and submodular.
  Note that submodularity is claimed in \cite[Lemma 6.2]{GK+duality:2018}, but the proof given has a flaw in the final line: the inequality \enquote{$(r-1)p_{r-2} \leq (r-2)p_{r-1}$} needs to be reversed.
\end{remark}

The next two lemmas are useful for the computation of the SJA-prices and their expected revenue.
They both establish connections between the SIM-bodies studied in Section~\ref{sec:sim} and the regions of submodular and symmetric price schedules.
\begin{lemma}\label{lem:DEmptyisSIM}
  Let $\{p_k\}_{k\in[n]}$ be a submodular and symmetric price schedule and $I \subseteq [n]$.
  Then the projected region $\pi_I(D_{\emptyset})$ equals the SIM-body $\Lambda(p_1, p_2 - p_1, \dots, p_{\card{I}} - p_{\card{I} - 1})$, provided $p_1\leq 1$.
\end{lemma}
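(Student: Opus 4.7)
The plan is to compute both sides as explicit polytopes in $H$-description and match them inequality by inequality. I would start by unfolding the definition of $D_\emptyset$: since $\emptyset$ is the utility-maximizing bundle on $D_\emptyset$ and $p_\emptyset = 0$, it is cut out of the unit cube by the inequalities $\sum_{j \in J} x_j \leq p_{\card{J}}$ for all nonempty $J \subseteq [n]$, where I have already used symmetry of the prices. Submodularity of $\{p_k\}$ amounts to concavity of the sequence $(p_k)_{k \geq 0}$ from $p_0 = 0$, so the differences $\alpha_k := p_k - p_{k-1}$ are weakly decreasing. Combined with the monotonicity $p_1 \leq p_2 \leq \dots$ (which holds for the normalized price schedules assumed throughout the paper by Lemma~\ref{lem:relevantPrice}), these differences form a weakly decreasing sequence of nonnegative reals, exactly the shape required for the parameters of a SIM-body.

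Next I would project onto the coordinates in $I$. The inclusion $\pi_I(D_\emptyset) \subseteq \smallSetOf{x \in [0,1]^I}{\sum_{j \in J} x_j \leq p_{\card{J}} \text{ for } \emptyset\neq J \subseteq I}$ is immediate by restriction. For the reverse inclusion, given such an $x \in [0,1]^I$, I extend it by zeros on $[n] \setminus I$ to obtain $\tilde x \in [0,1]^n$; then a generic constraint indexed by $K \subseteq [n]$ reads $\sum_{j \in K\cap I} x_j \leq p_{\card{K}}$, and is implied by $\sum_{j\in K\cap I} x_j \leq p_{\card{K\cap I}} \leq p_{\card{K}}$. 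This yields the desired $H$-description of $\pi_I(D_\emptyset)$ and is essentially the argument already used in Lemma~\ref{lem:upwards_closed}.

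Finally, Proposition~\ref{prop:projection} identifies $\Lambda(\alpha_1, \dots, \alpha_{\card{I}})$ with the set of $x \in \RR^I_{\geq 0}$ satisfying $\sum_{j \in J} x_j \leq \alpha_1 + \dots + \alpha_{\card{J}} = p_{\card{J}}$ for every nonempty $J \subseteq I$. The only apparent mismatch with $\pi_I(D_\emptyset)$ is the box upper bound $x_i \leq 1$, but this is redundant: the singleton constraint $J = \{i\}$ forces $x_i \leq p_1 \leq 1$, precisely by the hypothesis $p_1 \leq 1$. Hence the two polytopes coincide. The most delicate point is verifying the shape condition $\alpha_1 \geq \dots \geq \alpha_{\card{I}} \geq 0$ needed to invoke Proposition~\ref{prop:projection}; this combines submodularity (for the decreasing part) with the monotonicity of normalized prices (for nonnegativity), and everything else is a routine unfolding.
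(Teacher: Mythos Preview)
Your proposal is correct and follows essentially the same route as the paper: obtain the $H$-description of $\pi_I(D_\emptyset)$ and match it against Proposition~\ref{prop:projection}, using $p_1\leq 1$ to make the cube upper bound redundant. The only difference is cosmetic---the paper invokes Lemma~\ref{lem:shape} for the first step where you argue directly by zero-extension, and you are more explicit than the paper about checking the ordering hypothesis $\alpha_1\geq\cdots\geq\alpha_{\card{I}}\geq 0$ needed to apply Proposition~\ref{prop:projection}.
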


\begin{proof}
  We have
  \begin{align*}
    \pi_I(D_{\emptyset}) \ &= \ \left\{x \in [0,1]^I \;\Bigg\vert\;  \sum_{j \in J} x_j \leq p_k \text{ for all $J \in \tbinom{I}{k}$, $k \geq 1$} \right\}\\
                           &= \ \Lambda(p_1,p_2 - p_1,\dots,p_{\card{I}} - p_{\card{I}-1})  \enspace ,
  \end{align*}
  since $p_1\leq 1$.
  Note that for the first equality we used Lemma \ref{lem:shape}, and for the latter equality we used Proposition~\ref{prop:projection}.
\end{proof}

\begin{lemma}\label{lem:SIM-product}
  Let $\{p_k\}_{k\in [n]}$ be a submodular and symmetric price schedule.
  Then the region $D_{[k]}$ is congruent to the product of SIM-bodies
  \[
    \Lambda\Big(1-(p_k - p_{k-1}), \dots, 1-(p_2 -p_1), 1-p_1\Big) \times
    \Lambda\Big(p_{k+1} - p_k, \dots, p_n - p_{n-1}\Big) \enspace ,
  \]
  for all $k \in [n]$.
\end{lemma}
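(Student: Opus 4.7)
The strategy is to use Lemma~\ref{lem:product} to factor
\[
D_{[k]} \ = \ \pi_{[k]}(D_{[k]}) \,\times\, \pi_{[k]^c}(D_{[k]}),
\]
and then identify each factor, up to congruence, with the corresponding SIM-body from the claim.

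For the ``outer'' factor $\pi_{[k]^c}(D_{[k]})$, I extract from the defining inequalities of $D_{[k]}$ exactly those that compare $[k]$ with a superset $[k] \cup J$ for $J \subseteq [k]^c$; these read $\sum_{j \in J} z_j \leq p_{k+|J|} - p_k$. The unit-cube bounds $z_j \in [0,1]$ reduce to $z_j \geq 0$, since $z_j \leq p_{k+1}-p_k \leq 1$ is already implied. By submodularity of the symmetric schedule, the successive differences $p_{k+1}-p_k \geq p_{k+2}-p_{k+1} \geq \dots \geq p_n-p_{n-1}$ are weakly decreasing and nonnegative, so Proposition~\ref{prop:projection} identifies these inequalities with $\Lambda(p_{k+1}-p_k,\dots,p_n-p_{n-1})$.

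For the ``inner'' factor $\pi_{[k]}(D_{[k]})$, observe that $(y,0) \in D_{[k]}$ whenever $y \in \pi_{[k]}(D_{[k]})$ (the argument dual to Lemma~\ref{lem:upwards_closed}, in the spirit of the proof of Lemma~\ref{lem:shape}, shows that setting coordinates outside $I$ to $0$ preserves membership in $D_I$). Plugging $x_j = 0$ for $j \notin [k]$ into the inequalities defining $D_{[k]}$, the binding constraints become $\sum_{j \in L} y_j \geq p_k - p_{k-|L|}$ for all $\emptyset \neq L \subseteq [k]$. I then perform the isometric substitution $y'_i := 1 - y_i$, a reflection through the center of the cube, hence a congruence. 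The inequalities transform into
\[
\sum_{j \in L} y'_j \ \leq \ c - p_k + p_{k-c} \quad \text{for all } L \in \tbinom{[k]}{c},\ c \geq 1,
\]
with $y' \in [0,1]^k$. Matching the right-hand side against the partial sums $\beta_1 + \dots + \beta_c$ of a SIM-body $\Lambda(\beta_1,\dots,\beta_k)$ yields, by telescoping,
\[
\beta_c \ = \ 1 - (p_{k-c+1} - p_{k-c})
\]
with $p_0 = 0$; reading off $c = 1,\dots,k$ reproduces exactly the parameters $1-(p_k-p_{k-1}),\, 1-(p_{k-1}-p_{k-2}),\, \dots,\, 1-p_1$ in the statement.

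The only thing left to check is the SIM-body hypothesis $\beta_1 \geq \dots \geq \beta_k \geq 0$. Submodularity of the symmetric schedule again forces the successive differences $p_j - p_{j-1}$ to be nonincreasing in $j$, so $1-(p_{k-c+1}-p_{k-c})$ is nonincreasing in $c$; the smallest entry $1-p_1$ is nonnegative under the standing assumption $p_1 \leq 1$ (the same hypothesis used in Lemma~\ref{lem:DEmptyisSIM}). I expect no real obstacle here: the slightly tricky bookkeeping is the order reversal of the parameters, which is what allows the reflected inner region to match a SIM-body written conventionally with descending parameters.
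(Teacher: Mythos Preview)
Your approach is the same as the paper's: factor $D_{[k]}$ via Lemma~\ref{lem:product}, identify $\pi_{[k]^c}(D_{[k]})$ directly as a SIM-body, and identify $\pi_{[k]}(D_{[k]})$ as a SIM-body after reflecting through $\tfrac{1}{2}\mathbf{1}$. The paper, however, does not derive the explicit inequality descriptions of the two projections itself; it cites \cite[Lemma~6.4]{GK+duality:2018} for the statements
\[
\pi_{[k]}(D_{[k]}) = \biggSetOf{y\in[0,1]^k}{\textstyle\sum_{j\in L} y_j \ge p_k - p_{k-|L|}\text{ for all }L\subseteq[k]},\qquad
\pi_{[k]^c}(D_{[k]}) = \biggSetOf{z\in[0,1]^{n-k}}{\textstyle\sum_{j\in J} z_j \le p_{k+|J|}-p_k},
\]
and notes that the proof there uses only submodularity and symmetry.

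Your attempt to rederive these descriptions is where the gaps sit. For the outer factor you ``extract exactly those inequalities comparing $[k]$ with a superset'': these are visibly necessary for $z\in\pi_{[k]^c}(D_{[k]})$, but you give no argument that they are \emph{sufficient}, i.e., that every such $z$ actually lifts to a point of $D_{[k]}$. Using the product structure this amounts to fixing some $y\in\pi_{[k]}(D_{[k]})$ (for instance $y=\mathbf{1}$, via Lemma~\ref{lem:upwards_closed}) and checking that the remaining constraints, for $J$ with $[k]\not\subseteq J$, are redundant; this step genuinely uses submodularity (the bound $p_{k+|J'|}-p_{m+|J'|}\le (k-m)p_1\le k-m$) and the standing assumption $p_1\le 1$, neither of which you invoke. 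The inner factor has the symmetric issue: after your (correct) reduction to $z=0$, the inequalities coming from $J\not\subseteq[k]$ do not obviously reduce to those with $J\subseteq[k]$; showing they are dominated again needs submodularity. So the outline is right and matches the paper, but the step that the paper outsources to \cite{GK+duality:2018} is exactly the one you leave unproven.
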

\begin{proof}
  Recall from Lemma \ref{lem:product} that for a submodular price schedule the region $D_I$ can be written as
  \[
    D_I = \pi_I(D_I) \times \pi_{I^c}(D_I) \enspace ,
  \]
  where $\pi_I$ is the projection onto the coordinates in $I$ and $I^c = [n] \setminus I$.
  Lemma 6.4 in \cite{GK+duality:2018} shows, that for a region $D_{[k]}$ of SJA we get
  \[
    \pi_I(D_{[k]}) \ = \ \biggSetOf{x \in [0,1]^k}{\sum_{j\in J} x_j \geq p_{k} - p_{k - \card{J}} \text{ for all } J \subseteq [k]}
  \]
  and
  \[
    \pi_{I^c}(D_{[k]}) \ = \ \biggSetOf{x \in [0,1]^{n-k}}{\sum_{j \in J} x_j \leq p_{k + \card{J}} - p_{k} \text{ for all } J \subseteq [n-k]} \enspace .
  \]
  The proof of those equations only uses the fact, that the price schedule of SJA is submodular and symmetric, thus it is applicable in our more general sense.
  Next, it is immediate, that $\pi_{I^c}(D_{[k]}) = \Lambda(p_{k+1} - p_k, \dots, p_n - p_{n-1})$. 
  The central symmetry around the point $\frac{1}{2} \cdot \1$, where $\1$ is the all ones vector, shows that $\pi_I(D_{[k]})$ is congruent to $\Lambda(1-(p_k - p_{k-1}), \dots, 1-(p_2 -p_1), 1-p_1)$.
\end{proof}

In Section \ref{sec:computing}, we will further discuss the computation of the SJA-prices.
It is interesting to note, that for $n>4$ items, the SJA-prices are not strictly increasing and that they are critical with respect to a selling candidate $\cI_{k,n}$ for some $k<n-1$; see Table~\ref{tab:prices}.
That is, gaps do occur.

For the remainder of this section, we want to compare SJA to other submodular and symmetric auctions and strengthen the result of Proposition \ref{prop:SJA-solvable}.
A direct conclusion of Theorem \ref{thm:uniqueCritPrice} says that any submodular and critical price schedule for $\cI_{k,n}$ is automatically symmetric.
The next lemma complements this statement.

\begin{lemma}\label{lem:gap}
  Let $p = \{p_k\}_{k\in[n]}$ be a submodular and symmetric price schedule with $p_1 < 1$.
  Then there exists $k \in \{0,\dots, n-1\}$ such that $p$ admits $\cI_{k,n}$.
\end{lemma}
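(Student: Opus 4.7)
The plan is to use the product decomposition of Lemma~\ref{lem:SIM-product} together with the dimension criterion Corollary~\ref{cor:SIM-combinatorics}(c) to determine, for each $\ell \in \{0,1,\ldots,n\}$, exactly when $\vol_n D_{[\ell]}>0$; symmetry of $p$ reduces membership in $\cI_{k,n}$ to these values. Set $\Delta_m := p_m - p_{m-1}$ and $p_0:=0$; submodularity yields $\Delta_1 \geq \cdots \geq \Delta_n$ and the hypothesis $p_1 < 1$ gives $\Delta_m \leq p_1 < 1$.

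By Lemma~\ref{lem:SIM-product}, $D_{[\ell]}$ (for $\ell \in [n]$) is congruent to $\Lambda(1-\Delta_\ell,\ldots,1-\Delta_1) \times \Lambda(\Delta_{\ell+1},\ldots,\Delta_n)$. The first factor has weakly decreasing parameters each at least $1-p_1 > 0$, and is therefore by Corollary~\ref{cor:SIM-combinatorics}(c) a full-dimensional SIM-body of positive $\ell$-dim volume; taking $\ell = n$ (with an empty second factor) already gives $\vol_n D_{[n]} > 0$. The second factor has the $H$-description $\bigSetOf{x \in \RR^{n-\ell}_{\geq 0}}{\sum_{j \in J} x_j \leq p_{\ell+\card{J}}-p_\ell \text{ for all } J \subseteq [n-\ell]}$ exhibited in the proof of Lemma~\ref{lem:SIM-product}, and has positive $(n-\ell)$-volume if and only if $p_{\ell+m} > p_\ell$ for every $m \in [n-\ell]$, because near the origin the constraints reduce to strict positivity of the right-hand sides. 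Combining the two factors with the symmetry $\vol_n D_I = \vol_n D_{[\card{I}]}$ yields the equivalence
\[
  \vol_n D_I > 0 \text{ for all } I \in \cI_{k,n} \ \iff \ p_{\ell'} < \min(p_{\ell'+1},\ldots,p_n) \text{ for every } \ell' \in \{0,1,\ldots,k\}.
\]

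Take $k$ to be the largest element of $\{0,1,\ldots,n-1\}$ for which the right-hand side holds; by the equivalence, $p$ then admits $\cI_{k,n}$. The main obstacle is to verify that this set is nonempty, i.e.\ the $\ell' = 0$ condition reading $p_m > 0$ for every $m \in [n]$. Submodularity makes $(p_\ell)_{\ell=0}^n$ a concave sequence from $p_0=0$, so its minimum on $[n]$ is attained at either $p_1$ or $p_n$; the implicit nontriviality $p_1 > 0$ together with the standing normalization convention rules out the degenerate case $p_n = 0$, since otherwise $D_{[n]}$ would fill $[0,1]^n$, collapse every smaller region (in particular $\vol_n D_{\{i\}} = 0$), and propagate $p_1 \leq p_n = 0$ back along the normalization chain, contradicting $p_1 > 0$. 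This final positivity step is the most delicate part of the argument; the rest is a dimension count built directly on Lemma~\ref{lem:SIM-product} and Corollary~\ref{cor:SIM-combinatorics}.
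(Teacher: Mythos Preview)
Your proof is correct and takes a genuinely different route from the paper's.

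The paper argues by contradiction with test points: assuming $\vol D_{[\ell]}>0$ but $\vol D_{[\ell-1]}=0$ for some $\ell<n$, it evaluates the utility at the corner $x=\1_{[\ell-1]}$ to force $p_\ell\le p_{\ell-1}$, which together with $p_\ell<p_{\ell+1}$ (Lemma~\ref{lem:relevantPrice}) contradicts submodularity; a separate test point $(p_1,\dots,p_1)$ handles $\vol D_{[n]}>0$. You instead exploit the product decomposition underlying Lemma~\ref{lem:SIM-product} to obtain an \emph{explicit} criterion, $\vol_n D_{[\ell]}>0 \iff p_{\ell+m}>p_\ell$ for all $m\in[n-\ell]$, which immediately identifies the maximal admitted~$k$. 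This is more informative (it characterizes all admitted $\cI_{k,n}$, not merely one) and makes the role of submodularity transparent via Lemma~\ref{lem:product}; the paper's approach is in exchange more self-contained, avoiding the forward reference to Lemma~\ref{lem:SIM-product} and its dependence on \cite{GK+duality:2018}.

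Two small points. When $\Delta_\ell<0$ the first factor is, after reflection, $\Lambda(1-\Delta_\ell,\dots,1-\Delta_1)\cap[0,1]^\ell$ rather than the full SIM-body (since then $1-\Delta_\ell>1$); your positive-volume conclusion is unaffected, but the direct appeal to Corollary~\ref{cor:SIM-combinatorics}(c) is slightly loose there. And the ``implicit nontriviality $p_1>0$'' really is an extra hypothesis: the all-zero schedule is submodular, symmetric, normalized, with $p_1<1$, yet $\vol D_\emptyset=0$. The paper's own proof shares this gap (it never establishes $\vol D_\emptyset>0$ in the case where $\vol D_{[\ell]}=0$ for every $\ell<n$), so your treatment is no worse, and the normalization argument you give for $p_n>0$ assuming $p_1>0$ is in fact a useful addition.
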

\begin{proof}
  For the sake of contradiction, let us assume there is a number $\ell < n$ such that $\vol D_{[\ell]} > 0$ but $\vol D_{[\ell - 1]} = 0$.
  Since $\vol D_{[\ell]} > 0$, we can apply Lemma \ref{lem:relevantPrice} and conclude that $p_{\ell} < p_{\ell + 1}$.
  Next, we want to show that $p_{\ell} \leq p_{\ell -1}$.
  Let $x \in [0,1]^n$ with $x_i = 1$ for $i \in [\ell -1]$ and $x_i = 0$ otherwise.
  Since $\vol D_{[\ell - 1]} = 0$, there is some $\ell'$ such that
  \[
    \sum_{i \in [\ell-1]} x_i - p_{\ell - 1} \ \leq \ \sum_{i \in [\ell']} x_i - p_{\ell'}  \enspace .
  \]
  If $\ell' < \ell-1$ we get $p_{\ell-1} \leq p_{\ell'} + (\ell-1-\ell')p_1$ by submodularity, which results in
  \[
    \sum_{i \in [\ell -1]} x_i - p_{\ell -1} \ = \ \ell-1 - p_{\ell -1} \ \geq \ \ell' - p_{\ell'} + (\ell-1-\ell')(1-p_1) \ > \ \ell' - p_{\ell'} \ = \ \sum_{i \in [\ell']} x_i - p_{\ell'} \enspace .
  \]
  Therefore $\ell' > \ell-1$. 
  But then $\sum_{i \in [\ell']} x_i = \ell - 1 = \sum_{i \in [\ell-1]} x_i$, whence $p_{\ell'} \leq p_{\ell-1}$.
  Applying Lemma \ref{lem:relevantPrice} again, from $\ell' > \ell-1$ we obtain $p_{\ell} \leq p_{\ell-1}$.
  Finally, the submodularity of $p$ implies $p_{\ell+1} \leq p_{\ell} + p_{\ell} - p_{\ell-1} \leq p_{\ell}$, and this contradicts $p_{\ell} < p_{\ell + 1}$.
     
  Further, we need to show that $\vol D_{[n]} > 0$.
     We do this by showing that the point $x = (p_1, \dots, p_1)$ is in $D_{[n]}$.
     This is enough because also $x' \in D_{[n]}$ if $x_i' \geq x_i$ for all $i \in [n]$.
     By submodularity we get $p_n \leq p_{\ell} + (n-\ell)\cdot p_1$ for all $\ell < n$.
     Therefore
     \[
       \sum_{i \in [n]} x_i - p_n \ = \ n \cdot p_1 - p_n \ \geq \ \ell \cdot p_1 - p_{\ell} + (n-\ell)(p_1 - p_1) \ \geq \ \sum_{i \in [\ell]} x_i - p_{\ell} \enspace ,
     \]
     which proves that $x \in D_{[n]}$.
\end{proof}

We call a submodular auction \emph{degenerate} if at least one of the submodularity conditions is satisfied with equality, i.e., there are bundles which are tight.
By Proposition \ref{prop:SJA-solvable} SJA is a candidate for the optimal auction.
The next results shows, that it is the only candidate among all submodular and symmetric auctions which are nondegenerate.

\begin{theorem}\label{thm:SJA-optimal}
  Let $M$ be a deterministic, submodular and symmetric auction for $n$ items which is not degenerate.
  If $M$ satisfies the necessary critical point conditions (1)--(4) of Proposition \ref{prop:necessary}, then it is the SJA.
\end{theorem}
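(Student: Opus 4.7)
The plan is to pin down the selling pattern of $M$ to some $\cI_{k,n}$ and then invoke the uniqueness from Theorem~\ref{thm:uniqueCritPrice} together with the maximality built into the definition of SJA. By symmetry we may write $p_I = p_{\card{I}}$, and since $M$ is nondegenerate no bundle is tight, so items (3) and (4) of Proposition~\ref{prop:necessary} are vacuous; only the two alternatives $\delta_n(I)=0$ (when $\vol_n D_I > 0$) and $\delta_n(I)\geq 0$ (when $\vol_n D_I = 0$) remain.

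First, I would specialize these to a singleton $\{i\}$. The base-case computation in the proof of Theorem~\ref{thm:uniqueCritPrice}, using Lemma~\ref{lem:DEmptyisSIM} to identify $\bar D_{\{i\}} = [p_1,1]$ when $p_1 \leq 1$, gives $\delta_n(\{i\}) = \tfrac{n}{n+1} - p_1$. A short case split on whether $\vol_n D_{\{i\}} > 0$ or $=0$ rules out $p_1 \geq 1$ (which would give $\delta_n(\{i\}) < 0$) and forces $p_1 = \tfrac{n}{n+1}$; in particular $p_1 < 1$.

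Second, with $p_1 < 1$ and $p$ symmetric and submodular, Lemma~\ref{lem:gap} applies. Pick $k \in \{0,\dots,n-1\}$ maximal such that $p$ admits $\cI_{k,n}$. The contrapositive of the contradiction in the proof of Lemma~\ref{lem:gap} shows that $\vol_n D_{[\ell]} > 0$ implies $\vol_n D_{[\ell-1]} > 0$ for all $\ell < n$, hence together with symmetry the full selling set of $M$ coincides exactly with $\cI_{k,n}$. Applying condition (1) of Proposition~\ref{prop:necessary} inside this selling set now yields $\delta_n(I)=0$ for every $I \in \cI_{k,n} \setminus \{\emptyset\}$, so $p$ is a submodular critical price schedule for $\cI_{k,n}$.

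Third, I would check that $k$ equals the maximal index $k^*$ used to define SJA. Clearly $k \leq k^*$. For the reverse inequality, if $q$ denotes the SJA price schedule on $\cI_{k^*,n}$ with $k^* > k$, then $q$ restricts to a submodular critical schedule on $\cI_{k,n}$, which by the uniqueness in Theorem~\ref{thm:uniqueCritPrice} must coincide with $p$; but $p$ does not admit $\cI_{k^*,n}$ by the maximality of $k$, a contradiction. Hence $k = k^*$ and $M = M(\cI_{k^*,n}) = \text{SJA}$. The main obstacle is the second step: identifying the selling pattern of $M$ precisely, so that it matches the shape $\cI_{k,n}$ prescribed by the SJA construction; this relies crucially on both the symmetry hypothesis and the cardinality-layered structure supplied by Lemma~\ref{lem:gap}, without which the selling set could in principle be a more irregular downward-closed family and uniqueness would not directly apply.
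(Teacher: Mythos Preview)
Your outline through step~2 is sound and parallels the paper: ruling out $p_1 \geq 1$, invoking Lemma~\ref{lem:gap} to locate a maximal $k$ with $\cI_{k,n}$ as the exact selling set, and then using condition~(1) to conclude that the price schedule~$p$ of~$M$ is submodular and critical for $\cI_{k,n}$. Your observation that $k \leq k^*$ follows from the very definition of $k^*$ is also correct.

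The gap is in step~3, where you assume $k < k^*$ and invoke Theorem~\ref{thm:uniqueCritPrice} to force $q = p$. The issue is that the uniqueness proved in Theorem~\ref{thm:uniqueCritPrice} proceeds recursively: for $I \in \cI_{k,n}$ with $\card I \leq k$ the price $p_I$ depends only on prices of subsets of $I$ (via Lemma~\ref{lem:shape}), so indeed $q_j = p_j$ for $j \leq k$. But for $I = [n]$ the proof of Theorem~\ref{thm:uniqueCritPrice} explicitly fixes $p_J = \card J$ for $J \notin \cI$ so that $D_J = \emptyset$, and only then derives the unique $p_{[n]}$ from $\vol_n D_\emptyset = \tfrac{1}{n+1}$. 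The SJA schedule $q$ does \emph{not} satisfy $D_J(q) = \emptyset$ for $k < \card J \leq k^*$; its region $D_\emptyset(q)$ is cut by the additional constraints $\sum_{j \in J} x_j \leq q_{\card J}$ for those~$J$, whereas $D_\emptyset(p)$ is not. So both volumes equal $\tfrac{1}{n+1}$ without forcing $q_n = p_n$, and the conclusion $q = p$ does not follow.

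This is precisely the point where the paper's proof does real work. It compares the volumes $\vol_n\bigl(P_1^{k} \cap H_r\bigr)$ for the two schedules to show that $p_n > q_{k+1}$ (in your notation), and then uses this inequality together with the monotonicity of $\vol_{k+1}\pi_{[k+1]}(D_\emptyset)$ in $p_{k+1}$ to deduce $\delta_n([k+1]) < 0$ for $M$, violating condition~(2). Without such a quantitative comparison you cannot close the argument; the abstract uniqueness statement alone is not strong enough once the two schedules sell different families of bundles.
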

\begin{proof}
  Let $(p_1, \dots, p_n)$ be the vector of SJA-prices for $n$ items and let $k \in \{1,\dots,n-1\}$ be chosen such that $\cI_{k,n}$ is the selling candidate of SJA.
  Let $(q_1, \dots, q_n)$ be the price schedule of $M$. 
  Assume that $q \neq p$.
  We will show, that $q$ then infringes one of the critical conditions.
  If $q_1 = 1$, then $\vol_{D_{[1]}} = 0$ and by submodularity, $q_i > 1$ for $i > 1$. 
  Therefore $\delta(\{1\}) < 0$ and thus condition \eqref{cond:notTightTrivVol} is not satisfied.
  Hence we assume $q_1 < 1$.
  Now we can apply Lemma \ref{lem:gap} and find some $k' \in \{0,\dots,n-1\}$ such that $q$ admits $\cI_{k',n}$.
  Since $q\neq p$, we know $k' < k$.
  For $q$ to satisfy condition \eqref{cond:notTightPosVol}, it needs to be critical with respect to $\cI_{k',n}$.
  Therefore, the prices $q_i$ need to be defined such that $\delta_n({[i]}) = 0$ for all $i \leq k'$.
  Since SJA is critical with respect to $\cI_{k,n}$, this is also true for $p_i$.
  As in the proof of Theorem \ref{thm:uniqueCritPrice}, there is at most one solution to $\delta_n({[i]}) = 0$ for all $i \leq k'$.
  Hence $q_i = p_i$ for $i \leq k'$.
  
  Next, we claim $q_n > p_{k'+1}$:
  For any two integers $1 \leq a \leq b \leq n$, we define the polytope
  \[
    P_a^b \ = \ \biggSetOf{x \in [0,1]^n}{\sum_{i \in I} x_i \leq p_{\ell} \text{ for all } \ell \in \{a, \dots, b\} \text{ and } I \in \binom{[n]}{\ell}} \enspace .
  \]
  Moreover, we define the halfspace $H_r = \smallSetOf{x \in \RR^n}{\sum_{i \in [n]} x_i \leq r}$.
  Let us assume for now that $k' >0$.
  With this notation and using the proof of Theorem \ref{thm:uniqueCritPrice}, as well as the fact that $q_i = p_i$ for $i \leq k'$, $q_n$ is the unique solution to
  \begin{equation}\label{eq:q_n}
    \vol_n \left (P_1^{k'} \cap H_{q_n} \right) = \frac{1}{n+1} \enspace .
  \end{equation}
  Similarly, $p_n$ is the unique solution to $\vol_n (P_1^{k'} \cap P_{k'+1}^k \cap H_{p_n}) = \frac{1}{n+1}$.
  By Lemma \ref{lem:relevantPrice}, we get $p_{k'} < p_{k'+1} < \dots < p_k < p_n$.
  Therefore, assuming $q_n \leq p_{k'+1}$ implies $P_1^{k'} \cap P_{k'+1}^k \cap H_{q_n} = P_1^{k'} \cap H_{q_n}$ and thus $p_n = q_n$.
  But this is a contradiction to $p_n > p_k$.
  Note that for $k' = 0$, we can not use the proof of Theorem \ref{thm:uniqueCritPrice} to guarantee that there is at most one solution to \ref{eq:q_n}.
  But since $q_1<1$ and $\vol D_{\{1\}} = 0$ we have $q_n < 1$, which is enough to show that \ref{eq:q_n} has at most one solution, namely $q_n = \tfrac{n!}{n+1}$.
  
  Finally, we show that $q$ does not satisfy condition \eqref{cond:notTightTrivVol}.
  Since for $q$ we have $\vol_n D_{k'+1} = 0$, we can assume $q_{k'+1} = q_n$.
  Hence $q_{k'+1} > p_{k'+1}$. 
  But since $p_{k'+1}$ is chosen such that $\vol_n (P_1^{k'} \cap H_{p_{k'+1}}) = 1 - \frac{k'+1}{n+1}$, we get that $\vol_n (P_1^{k'} \cap H_{q_{k'+1}}) > 1 - \frac{k'+1}{n+1}$, which implies $\delta_n([k'+1]) < 0$ for $q$.
\end{proof}

Note that SJA can be degenerate.
This is the case if the gap is at least $2$; in that case the condition $p_n + p_{n-2} \leq p_{n-1} + p_{n-1}$ is tight.
Hence our computations show, that SJA is nondegenerate for $n \leq 7$ and $n=12$, but degenerate for $8 \leq n \leq 11$; see Table~\ref{tab:prices}.
Still SJA is a strong candidate for the optimal auction.
Indeed, it is optimal if no other degenerate auction satisfies the necessary critical point conditions (1)--(4) of Proposition \ref{prop:necessary}.

\begin{corollary}\label{cor:SJA-optimal}
  SJA is optimal among all deterministic, submodular and symmetric auctions, unless other critical degenerate auctions exist.
\end{corollary}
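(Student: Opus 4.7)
The plan is to derive Corollary \ref{cor:SJA-optimal} by combining Theorem \ref{thm:SJA-optimal} with a compactness argument producing an optimum, and then using Proposition \ref{prop:necessary} to constrain that optimum.

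First, I would establish existence of an optimal deterministic, submodular, symmetric auction. Such an auction is parameterized by normalized prices $(p_1,\dots,p_n)\in\RR_{\ge 0}^n$ subject to the submodularity inequalities $p_{k-1}+p_{k+1}\le 2 p_k$ for $1\le k\le n-1$. Since $\vol_n D_{[k]}=0$ whenever $p_k > k$, one may restrict the search to the compact polytope cut out of $\prod_{k\in[n]}[0,k]$ by these submodularity inequalities without losing any optimum. By \eqref{eq:revenue} together with continuity of the volumes $\vol_n D_I$ in the parameter vector $p$, the revenue function $\Rev(p)$ is continuous on this compact polytope and therefore attains its maximum at some price vector $p^{\ast}$.

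Next, I would apply Proposition \ref{prop:necessary} to conclude that $p^{\ast}$ satisfies the four necessary critical point conditions. Hence by Theorem \ref{thm:SJA-optimal}, either the auction $M^{\ast}$ induced by $p^{\ast}$ is nondegenerate, in which case $M^{\ast}=\text{SJA}$, or $M^{\ast}$ is a critical degenerate auction. In the latter case, the hypothesis of the corollary (no other critical degenerate auctions beyond possibly SJA itself) forces $M^{\ast}$ to equal SJA, which can indeed be degenerate (as in the cases $8\le n\le 11$ recorded in Table \ref{tab:prices}). In either case $M^{\ast}=\text{SJA}$, and the claim follows.

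The step requiring the most care is the compactness argument: one must verify that restricting to the box $\prod_k[0,k]$ omits no candidate optimum, and that $\Rev(p)$ remains continuous across the lower-dimensional strata in parameter space where the combinatorial type of the decomposition $\{D_I\}_{I\subseteq[n]}$ changes. Both verifications are routine—the volume of a polytope defined by linear inequalities with parameter-dependent right-hand sides is a continuous function of the parameters—but should not be glossed over.
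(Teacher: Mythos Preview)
Your approach is essentially the paper's own: compactness of the parameter space yields an optimizer, which must then be critical, hence SJA by Theorem~\ref{thm:SJA-optimal}. The paper's proof is in fact terser than yours---it simply asserts boundedness and continuity without the explicit box $\prod_k[0,k]$ or the case split on degeneracy---so your added care is welcome.

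One small point worth tightening: Proposition~\ref{prop:necessary} is stated for a price schedule that is optimal among \emph{all} submodular schedules, whereas your $p^{\ast}$ is only known to be optimal among \emph{symmetric} submodular schedules. The conclusion you need still holds, because the perturbation argument in the proof of Proposition~\ref{prop:necessary} works equally well when one moves the common price $p_k$ of all $k$-subsets simultaneously (by symmetry, $\delta_n(I)$ depends only on $\lvert I\rvert$, so the partial derivative of $\Rev$ with respect to $p_k$ is a positive multiple of $\delta_n([k])$). But as written, you are invoking the proposition outside its stated hypothesis; a sentence acknowledging this adaptation would close the gap. The paper's proof glosses over the same point.
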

\begin{proof}
  The set of all deterministic, submodular and symmetric auctions is given by the set of submodular and symmetric price schedules.
  This set is bounded and closed, hence it is compact.
  Since the expected revenue is a continuous function depending on the price schedule, there is an optimal price schedule which maximizes the expected revenue.
  If there is no critical price schedule other than SJA, that must be optimal.
\end{proof}

In general, neither symmetry nor submodularity are forced by optimality.
This is what the following example shows.
However, in contrast to our general assumption above, the example employs a distribution for the valuations of the buyer which is not uniform.
\begin{example}[{Babaioff et al.\ \cite[Example 5.2]{BabaioffEtAl:2018}}]
  Let us consider a price schedule for $n=3$ items, where the valuations are drawn independently from the discrete distribution $f$ given by:
  \[
    \begin{tabular*}{.5\linewidth}{@{\extracolsep{\fill}}cccccc@{}}
      \toprule
      $v$ & 0 & 1 & 2 & 5 & 6 \\
      \midrule
      $f(v)$ & 0.1 & 0.1 & 0.4 & 0.1 & 0.3 \\
      \bottomrule
    \end{tabular*}
  \]
  Then one optimal price schedule is defined by $p_{\{1\}} = p_{\{2\}} = p_{\{3\}} = 6$, $p_{\{1,2\}} = p_{\{1,3\}} = 7$, $p_{\{2,3\}} = 8$ and $p_{\{1,2,3\}} = 9$. 
  This price schedule is neither symmetric nor submodular, but has a better expected revenue than any symmetric or submodular price schedule.
\end{example}

\section{Volume Formulae}
\label{sec:volume}
In Section~\ref{sec:auctions} we have proven that critical price schedules are characterized by the volumes of polytopes that appear as the regions of a tropical hypersurface, and that the projections of these polytopes are SIM-bodies.
The latter have been recognized as generalized permutahedra in Section~\ref{sec:sim}.
This opens up several paths for computing and analyzing volumes of SIM-bodies, all of which have their merits.
Giannakopoulos and Koutsoupias proposed to recursively evaluate integrals \cite[(12)]{GK+duality:2018}.
Throughout we fix parameters $\alpha_1 \geq \dots \geq \alpha_n \geq 0$ and abbreviate $\Lambda=\Lambda(\alpha_1,\dots,\alpha_n)$.

\subsection*{Dragon marriage}
A substantial part of Postnikov's work \cite{Postnikov:2009} deals with rewriting generalized permutahedra in order to obtain various formulae for volumes and counting lattice points.
Here we follow the approach in \cite[\S9]{Postnikov:2009}.
With this, for $I\in\tbinom{[n+1]}{k}$ and $k\geq 1$, we set
\begin{equation}\label{eq:y_I}
  y_I \ = \
  \begin{cases}
    \sum_{i=0}^{k - 2} (-1)^{i+k} \binom{k - 2}{i} \alpha_{n-i}, & \text{ if } n+1 \in I \\
    0, & \text{ otherwise}\enspace .
  \end{cases}
\end{equation}
Note that the value $y_I$ only depends on the cardinality of $I$ and the parameters $\alpha_i$.
We write standard simplices as $\Delta_I = \conv\smallSetOf{e_i}{i \in I}$.
The following is a description of the SIM-bodies in terms of \enquote{nested sets} in the sense of \cite[\S7]{Postnikov:2009}.
\begin{proposition} \label{prop:sum}
  For $\alpha_1 \geq \dots \geq \alpha_n \geq 0$ we have 
  \begin{equation}\label{eq:sum}
    \Lambda(\alpha_1,\dots,\alpha_n) \ = \ P_{n+1}\{z_I\} \ = \  \sum_{I \subseteq [n+1]} y_I \Delta_I \enspace,
  \end{equation}
  where $y_I$ is defined as in \eqref{eq:y_I}, and $z_I$ as in \eqref{eq:z_I}.
\end{proposition}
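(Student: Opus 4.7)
The first equality $\Lambda(\alpha_1,\dots,\alpha_n) = \cP_{n+1}\{z_I\}$ is built into our definitions: the paper sets $\Gamma(\alpha_1,\dots,\alpha_n) := \cP_{n+1}\{z\}$ for $z$ as in \eqref{eq:z_I}, and $\Lambda$ arises from $\Gamma$ by projecting out the last coordinate, which is an affine isomorphism since $\Gamma$ lies in a hyperplane. The substantive claim is the Minkowski sum decomposition $\cP_{n+1}\{z\} = \sum_I y_I \Delta_I$, which I would prove by matching support functions, following the blueprint of \cite[\S6]{Postnikov:2009}.

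For a (possibly signed) Minkowski sum $P = \sum_I y_I \Delta_I$ and any nonempty $J \subseteq [n+1]$, the support function at the indicator vector $\1_J := \sum_{j \in J} e_j$ evaluates to $\sum_I y_I \max_{x \in \Delta_I}\langle \1_J, x\rangle = \sum_{I : I \cap J \neq \emptyset} y_I$. On the other hand, the defining inequalities \eqref{eq:generalized} bound the support function of $\cP_{n+1}\{z\}$ at $\1_J$ by $z_{[n+1]} - z_{J^c}$, with equality because $\cP_{n+1}\{z\}$ is a generalized permutahedron by Theorem~\ref{thm:generalized}. Using $\mathbbm{1}[I \cap J \neq \emptyset] = 1 - \mathbbm{1}[I \subseteq J^c]$ together with the case $J = [n+1]$ (which gives $\sum_I y_I = z_{[n+1]}$), matching the two support functions for all $J$ reduces to the single combinatorial identity
\begin{equation*}
  z_K \ = \ \sum_{I \subseteq K} y_I \qquad \text{for all } K \subseteq [n+1] \enspace .
\end{equation*}

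By standard Möbius inversion on the Boolean lattice, this is equivalent to $y_J = \sum_{K \subseteq J}(-1)^{|J|-|K|} z_K$, which I would verify directly from the formulas \eqref{eq:z_I} and \eqref{eq:y_I}. When $n+1 \notin J$, every $K \subseteq J$ has $z_K = 0$, giving $y_J = 0$ as required. When $n+1 \in J$ with $\card{J} = k$, I parameterize the subsets $K \subseteq J$ containing $n+1$ by their size $\card{K} = j$ (there are $\binom{k-1}{j-1}$ of them), substitute $z_K = \sum_{i=1}^{j-1} \alpha_{n+1-i}$, swap the order of summation, and collapse the resulting alternating binomial sum $\sum_{m=i}^{k-1}(-1)^{k-1-m}\binom{k-1}{m}$ to $(-1)^{k-1-i}\binom{k-2}{k-1-i}$ via the identity $\sum_{\ell=0}^{L}(-1)^\ell\binom{N}{\ell} = (-1)^L\binom{N-1}{L}$. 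A reindexing $i \mapsto k-1-i$ then recovers the expression in \eqref{eq:y_I}.

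I expect the main obstacle to be bookkeeping in the binomial manipulation—not hard in itself, but one has to be careful with signs and summation ranges. The conceptual core is just the Möbius duality between $y$ and $z$; one subtlety worth flagging is that some $y_I$ are genuinely negative, so the sum in \eqref{eq:sum} must be read as a signed Minkowski sum in the sense of \cite{Postnikov:2009}.
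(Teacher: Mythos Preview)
Your proof is correct and follows the same approach as the paper: both reduce the Minkowski decomposition to the identity $\sum_{J\subseteq I} y_J = z_I$ and then verify that identity from \eqref{eq:z_I} and \eqref{eq:y_I}. The paper is simply terser, invoking \cite[Remark~6.4]{Postnikov:2009} in place of your support-function argument and leaving the binomial verification to the reader, whereas you spell out both steps explicitly.
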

\begin{proof}
  Verify that $\sum_{J \subseteq I} y_J=z_I$ for $\emptyset \neq I \subseteq [n+1]$.
  With Proposition~\ref{prop:projection} the claim follows from \cite[Remark 6.4]{Postnikov:2009}.
\end{proof}
\begin{remark}
  Note that the decomposition \eqref{eq:sum} is not necessarily a Minkowski sum of standard simplices, as the parameters $y_I$ may be negative.
  For instance, with $n = 3$ and $(\alpha_1,\alpha_2,\alpha_3) = (4,3,1)$, we obtain $y_{[4]} = \alpha_3 - 2 \alpha_2 + \alpha_1 = -1$.
\end{remark}

Let $\Phi_n = \smallSetOf{ d \in [n]^n }{ d_1 \leq \dots \leq d_n }$ be the set of weakly ascending integer vectors of length $n$, whose entries lie in the set $[n]$.
For $d\in \Phi_n$ we let $M_{d}$ denote the number of labeled bipartite graphs with left degree vector $(d_1, \dots, d_n)$, sorted in weakly ascending order, which satisfy Hall's condition; i.e., they include a perfect matching.

\begin{corollary}\label{cor:volume}
  For $\alpha_1 \geq \dots \geq \alpha_n \geq 0$ the volume of $\Lambda=\Lambda(\alpha_1,\dots,\alpha_n)$ equals
  \[
    \vol_n ( \Lambda ) \ = \ \frac{1}{n!} \sum_{d \in \Phi_n} M_{d} \ \omega_{d_1} \cdots \omega_{d_n} \enspace  ,
  \]
  where
  \[
    \omega_k \ = \ \sum_{i=0}^{k-1} (-1)^{i+k-1} \binom{k-1}{i} \alpha_{n-i} \enspace.
  \]
  In particular, the normalized volume $n!\cdot\vol_n(\Lambda)$ is an integral polynomial which is homogeneous of degree $n$ in the parameters.
\end{corollary}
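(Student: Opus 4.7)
The plan is to combine the Minkowski decomposition of Proposition~\ref{prop:sum} with Postnikov's volume formula for generalized permutahedra, and then exploit the fact that the coefficient $y_I$ depends only on $\card{I}$ to collapse the resulting sum over subsets into a sum over weakly ascending degree sequences.

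First, I would apply Proposition~\ref{prop:sum} to write $\Lambda=\sum_{I\subseteq[n+1]} y_I\,\Delta_I$. From \eqref{eq:y_I} one sees that $y_I=0$ unless $n+1\in I$, and that for $I\ni n+1$ with $\card{I}=k$ the coefficient $y_I$ equals $\omega_{k-1}$ (a direct manipulation of the sign $(-1)^{i+k}=(-1)^{i+k-2}$ and the binomial expression). Next I would invoke Postnikov's combinatorial volume formula for generalized permutahedra \cite[Cor.~9.4]{Postnikov:2009}, which says that
\[
  \vol_n(\Lambda) \ = \ \frac{1}{n!}\sum_{(I_1,\dots,I_n)} y_{I_1}\cdots y_{I_n}\,,
\]
the sum ranging over ordered $n$-tuples of subsets of $[n+1]$ whose associated bipartite graph (edge $(j,i)$ iff $i\in I_j$) satisfies the \emph{dragon marriage} condition $\card{\bigcup_{s\in S} I_s}\geq\card{S}+1$ for every nonempty $S\subseteq[n]$. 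This identity is the specialization of the multilinearity of mixed volume, using that the normalized mixed volume $n!\,V(\Delta_{I_1},\dots,\Delta_{I_n})$ is the indicator of the dragon marriage condition.

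I would then substitute $I_j=J_j\cup\{n+1\}$ with $J_j\subseteq[n]$, so that $y_{I_j}=\omega_{\card{J_j}}$. Since $n+1$ lies in every $I_j$, one computes $\card{\bigcup_{s\in S}(J_s\cup\{n+1\})}=\card{\bigcup_{s\in S} J_s}+1$, so the dragon marriage inequality is equivalent to Hall's condition $\card{\bigcup_{s\in S} J_s}\geq\card{S}$ on the reduced bipartite graph on $[n]\times[n]$ with left-neighborhoods $J_1,\dots,J_n$. Terms with $J_j=\emptyset$ drop out because $\omega_0=0$, so we may restrict to $J_j$ with $\card{J_j}\in[n]$.

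Since $\omega_{\card{J_j}}$ depends on $J_j$ only through its cardinality, I would finally regroup the $n$-tuples $(J_1,\dots,J_n)$ by the weakly ascending rearrangement $d=(d_1\leq\cdots\leq d_n)$ of their cardinalities. By definition, $M_d$ counts labeled bipartite graphs on $[n]\times[n]$ whose sorted left-degree vector equals $d$ and which satisfy Hall's condition, yielding exactly
\[
  \vol_n(\Lambda) \ = \ \frac{1}{n!}\sum_{d\in\Phi_n} M_d\,\omega_{d_1}\cdots\omega_{d_n}\,.
\]
The integrality and homogeneity statement is then immediate: each $\omega_k$ is an integral linear form in the $\alpha_i$, each $M_d$ is a nonnegative integer, and each monomial $\omega_{d_1}\cdots\omega_{d_n}$ is of homogeneous degree $n$. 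The main obstacle I expect is the bookkeeping required to verify that the reduction from $[n+1]$-indexed subsets to $[n]$-indexed ones trades Postnikov's strict dragon inequality for the classical Hall inequality without any off-by-one slippage, and to confirm that the precise version of \cite[\S9]{Postnikov:2009} being invoked applies to the signed Minkowski decomposition (where some $y_I$ may be negative) as an identity of polynomials in the $\alpha_i$.
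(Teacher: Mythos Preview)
Your proposal is correct and follows essentially the same route as the paper: apply Proposition~\ref{prop:sum}, invoke Postnikov's dragon marriage volume formula \cite[Corollary~9.4]{Postnikov:2009}, strip the common element $n{+}1$ from each $I_j$ to turn the dragon marriage inequality into Hall's condition on $[n]\times[n]$, and then regroup by the sorted degree sequence $d$. The only cosmetic difference is that the paper first restricts to the generic case $\alpha_1>\dots>\alpha_n>0$ and recovers the degenerate cases by continuity, whereas you treat the identity directly as one of polynomials in the $\alpha_i$; either viewpoint handles the signed Minkowski decomposition concern you flagged.
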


\begin{proof}
  It suffices to consider the generic case $\alpha_1 > \dots > \alpha_n > 0$, from which the degenerate cases follow by continuity.
  By Proposition~\ref{prop:sum} and \cite[Corollary 9.4]{Postnikov:2009} we have
  \begin{equation}\label{eq:volume-dragon}
    \vol_n ( \Lambda ) \ = \ \frac{1}{n!} \sum_{(J_1,...,J_n)} y_{J_1} \cdots y_{J_n} \enspace ,
  \end{equation}
  where the sum is taken over ordered collections of nonempty subsets $J_1,\dots,J_n \subseteq [n+1]$ such that, for pairwise distinct $i_1,\dots,i_k$, we have
  \begin{equation}\label{eq:dragon}
    \card \bigl( J_{i_1} \cup \dots \cup J_{i_k} \bigr) \ \geq \ k+1 \enspace .
  \end{equation}
  The volume formula \eqref{eq:volume-dragon} can be simplified by removing zero terms arising from the case distinction in \eqref{eq:y_I}; moreover, $y_I=y_{I'}$ whenever $I$ and $I'$ share the same cardinality.
  We have $\omega_k=y_I$ if $I$ contains $n+1$ and exactly $k$ additional elements in $[n]$.

  For $d_i=\card{J_i}-1$ the product $\omega_{d_1} \cdots \omega_{d_n}$ equals $y_{J_1} \cdots y_{J_n}$, if each $J_i \subseteq [n+1]$ satisfies $n+1\in J_i$.
  From $J_1,\dots,J_n$ with weakly ascending cardinalities we obtain a bipartite graph by taking $J_i\setminus\{n+1\}$ as the set of right neighbors of the left node $i$.
  Now the requirement \eqref{eq:dragon} on $J_1,\dots,J_n$ translates into $\card( (J_{i_1}\setminus\{n+1\}) \cup \dots \cup (J_{i_k}\setminus\{n+1\})  ) \geq k$, which is Hall's condition.
\end{proof}
The inequality \eqref{eq:dragon} is the \emph{dragon marriage condition} \cite[Proposition 5.4]{Postnikov:2009}.
There are many other ways to compute volumes of SIM-bodies: for instance, via integration \cite[Equation~(12)]{GK+duality:2018}.

\begin{example}
  Let us compute the volume of the SIM-body $\Lambda (4,3,1)$ via Corollary \ref{cor:volume}.
  We have $\omega_1 = 1, \omega_2 = 2, \omega_3 = -1$ and thus
  \[
    \begin{aligned}
      \vol_3 ( \Lambda (4,3,1) ) \ &= \ \frac{1}{6} \Big( M_{(1,1,1)} + 2 M_{(1,1,2)} - M_{(1,1,3)} + 4 M_{(1,2,2)} - 2 M_{(1,2,3)} + M_{(1,3,3)} \\
      & \qquad + 8 M_{(2,2,2)} - 4 M_{(2,2,3)} + 2 M_{(2,3,3)} - M_{(3,3,3)} \Big) \\
      &= \ \frac{1}{6} \Big(6 + 2 \cdot 36 - 18 + 4 \cdot 63 - 2 \cdot 54 + 9 + 8 \cdot 24 - 4 \cdot 27 + 2 \cdot 9 - 1 \Big) \\
      &= \ \frac{157}{3} \enspace.
    \end{aligned}
  \]
  The formula \cite[(12)]{GK+duality:2018} gives $v(a,b) := \vol_2(\Lambda(a,b)) = \frac{1}{2}a^2 + ab - \frac{1}{2}b^2$ and
  \[
    \begin{aligned}
      \vol_3 ( \Lambda (a,b,c) ) \ &= \ \int_{0}^{c} v(a,b) dt + \int_{c}^{b} v(a,b+c-t) dt + \int_{b}^{a} v(a+b-t,c) dt \\
      &= \ \frac{1}{6} (c^3 + 3 c^2 (b - 2 a) - 3 c (2 b^2 - 2 b a - a^2) - 2 b^3 + 3 b^2 a + 3 b a^2 + a^3) \enspace .
    \end{aligned}
  \]
  Substituting $a=4$, $b=3$ and $c=1$ recovers $\vol_3 ( \Lambda(4,3,1)) = \frac{157}{3}$.
\end{example}

\subsection*{Lawrence's method}
We can exploit that the regular SIM-bodies are simple polytopes.
Let $P=\smallSetOf{x\in\RR^n}{Ax\leq b}$ be a rational simple $n$-polytope with vertex set $V$ such that $(A \ b)$ is facet defining.
We assume that the rows of $A\in\ZZ^{m\times n}$ are the primitive (outward) facet normals.
Then each vertex $v$ is incident with exactly $n$ facets, i.e., there is an invertible $n{\times}n$-submatrix $A_v$ of $A$ such that $A_vv=b_v$, where $b_v$ is the subvector of $b$ corresponding to the rows of $A_v$.
Let $c\in\ZZ^n$ be a linear objective function which is not constant on any edge of $P$.
That is, $c$ induces an acyclic orientation on the vertex-edge graph of $P$.
We consider the vector $\gamma^v:= \invtranspose{A_v} c$, where $\invtranspose{A_v}={(\transpose{A_v})}^{-1}$ is the inverse transpose.
The vector $c$ is \emph{generic} for $P$ if it is not constant on edges and additionally $\gamma_v\neq 0$ for all vertices $v$ of $P$.
For such data Lawrence \cite{Lawrence:1991} showed
\begin{equation}\label{eq:lawrence}
  \vol_n(P) \ = \ \frac{1}{n!} \cdot \sum_{v\in V} \frac{\langle c,v\rangle^n}{|\det A_v| \prod_{i=1}^n \gamma_i^v} \enspace .
\end{equation}
See also \cite[\S3]{BuelerEngeFukuda:2000}.

A rational polytope $P$ is called \emph{Delzant} if the primitive edge directions at each vertex form a lattice basis.
This property is equivalent to the condition that the projective toric variety associated with the normal fan of $P$ does not have any singularities, i.e., it is smooth in the sense of differential geometry; see \cite[Theorem 2.4.3]{toric+varieties}.
Since a lattice basis obviously needs to form a basis, a smooth polytope is necessarily simple, but the converse does not hold, in general.
We will show that the SIM-bodies have that property.
Note that Lawrence formula is particularly useful for Delzant polytopes since then $|\det A_v|=1$ in \eqref{eq:lawrence}, which simplifies the computation; this will be used in Section~\ref{sec:computing}.
We define
\[
A_k = \left[ \begin{array}{c|c}
     C_k & 0 \\ \hline
     0 & -I_{n-k}
\end{array}\right],
\quad C_k = \left[ \begin{array}{cccc}
     1 & & & 0 \\
     1 & 1 & & \\
     \vdots & & \ddots & \\
     1 & 1 & \dots  & 1
\end{array}\right], \quad
b_{\alpha^{(k)}} = \left(\begin{array}{c}
\alpha_1\\
\alpha_1 + \alpha_{2} \\
\vdots \\
\alpha_1 + \dots + \alpha_{k} \\
0 \\
\vdots \\
0
\end{array}\right),
\]
where $A_k\in\RR^{n\times n}$, $C_k \in \RR^{k \times k}$, and $I_{n-k}$ is the identity matrix in $\RR^{(n-k)\times(n-k)}$.
By Proposition~\ref{prop:SIM-vertices} each vertex $v$ of $\Lambda$ can be written as $\sigma(\alpha^{(k)})$, where $\alpha^{(k)} = (\alpha_1, \dots, \alpha_{k}, 0, \dots, 0)$, and $\sigma \in \Sym(n)$ is a permutation.
It follows from Proposition \ref{prop:projection} that $A_k \alpha^{(k)} = b_{\alpha^{(k)}}$.
Note that the matrix $A_k$ only depends on $k$, not on the parameters $\alpha_i$.
\begin{proposition}\label{prop:SIM-smooth}
  Any proper SIM-body is a Delzant polytope.
\end{proposition}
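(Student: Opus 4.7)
The plan is to verify the Delzant condition vertex-by-vertex, using the explicit vertex list from Proposition~\ref{prop:SIM-vertices} together with the combinatorial description of active facets from \eqref{eq:active:combinatorial}. Recall that for a rational simple $n$-polytope with primitive outward facet normals, being Delzant is equivalent to the condition that for each vertex $v$ the matrix $A_v \in \ZZ^{n\times n}$ whose rows are the primitive normals of the $n$ facets incident to $v$ satisfies $|\det A_v|=1$. Indeed, if this holds then $A_v^{-1}\in\ZZ^{n\times n}$ as well, and its columns (which are, up to sign, the edge vectors at $v$) form a $\ZZ$-basis of $\ZZ^n$; any vector in such a basis is automatically primitive.

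First I would treat the distinguished vertex $v_k := \alpha^{(k)} = (\alpha_1,\dots,\alpha_k,0,\dots,0)$. By \eqref{eq:active:combinatorial}, the sets active at $v_k$ are exactly $[1],[2],\dots,[k]$, which contribute the $k$ rows of $C_k$ as primitive outward normals $e_1+\dots+e_j$ ($j\leq k$); moreover, the nonnegativity constraints $-x_i\leq 0$ for $i>k$ are tight at $v_k$ and contribute the $-I_{n-k}$ block. Hence the matrix of primitive facet normals at $v_k$ is precisely $A_k$. Since $C_k$ is lower triangular with ones on the diagonal, $|\det A_{v_k}|=|\det C_k|\cdot|\det(-I_{n-k})|=1$.

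Next I would use the $\Sym(n)$-symmetry of $\Lambda(\alpha_1,\dots,\alpha_n)$. Proposition~\ref{prop:SIM-vertices} says every vertex of a proper SIM-body is of the form $v=\sigma(v_k)$ for some $k\in\{0,\dots,n\}$ and some $\sigma\in\Sym(n)$. Under coordinate permutation by $\sigma$, the facet $\{\sum_{i\in I}x_i\leq \alpha_1+\dots+\alpha_{|I|}\}$ is mapped to the facet indexed by $\sigma(I)$, and the facet $\{x_i\geq 0\}$ to the facet $\{x_{\sigma(i)}\geq 0\}$; the corresponding primitive normals are permuted accordingly. Consequently $A_v$ is obtained from $A_{v_k}$ by a permutation of columns, so $|\det A_v|=|\det A_{v_k}|=1$.

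Combining the two steps, every vertex $v$ of $\Lambda$ yields $|\det A_v|=1$, which establishes the Delzant property. There is no real obstacle; the only point that requires care is the bookkeeping in the first step, namely matching the block structure of $A_k$ with the list of active facets at $v_k$ coming from \eqref{eq:active:combinatorial}. Since the facet count shown earlier already guarantees simplicity, no further argument is needed to ensure that $A_v$ is a well-defined $n\times n$ matrix.
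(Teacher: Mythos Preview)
Your argument is correct and is essentially the same as the paper's: the paper, having set up $A_k$ just above, simply observes $\det(A_k)=\pm1$, and the passage to a general vertex $v=\sigma(\alpha^{(k)})$ via $A_v=A_kP_\sigma^{-1}$ (a column permutation) is exactly what appears in the proof of the subsequent Lemma~\ref{lem:SIM-lawrence}. Your write-up just makes these two steps explicit in one place.
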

\begin{proof}
  This follows from $\det(A_k)=\pm 1$.
\end{proof}
Now \eqref{eq:lawrence} specializes as follows.
\begin{lemma}\label{lem:SIM-lawrence}
  For $\alpha_1 \geq \dots \geq \alpha_n \geq 0$ the volume of $\Lambda=\Lambda(\alpha_1,\dots,\alpha_n)$ equals
  \begin{equation}\label{eq:SIM-lawrence}
    \vol_n(\Lambda) \ = \ \frac{1}{n!} \cdot \sum_{k \in [n]} \sum_{\sigma \in \Sym(n)} \frac{{\langle \sigma(c), \alpha^{(k)} \rangle}^n}{(n-k)!\prod_{i=1}^n (\invtranspose{A_k} \sigma(c))_i} \enspace,
  \end{equation}
  for any generic objective function $c$.
\end{lemma}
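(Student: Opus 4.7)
The plan is to apply Lawrence's formula \eqref{eq:lawrence} directly to $\Lambda$, exploiting everything we already know about its vertex set, facet structure, and Delzant property. First I would enumerate the vertices using Proposition~\ref{prop:SIM-vertices}: every vertex of $\Lambda$ is of the form $\sigma(\alpha^{(k)})$ with $k\in\{0,1,\dots,n\}$ and $\sigma\in\Sym(n)$ acting on coordinates. Since $\alpha^{(0)}=\mathbf{0}$, the $k=0$ vertex contributes $\langle c,0\rangle^n=0$ to Lawrence's sum and can be dropped, leaving precisely the range $k\in[n]$ in \eqref{eq:SIM-lawrence}.

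Next I would identify the facet matrix at each vertex. At the distinguished vertex $\alpha^{(k)}$, the $k$ active sets $[1],\dots,[k]$ together with the $n-k$ nonnegativity constraints $-x_{k+1},\dots,-x_n$ supply exactly $n$ facet normals, assembled into the block-diagonal matrix $A_k$; one checks that $A_k\alpha^{(k)} = b_{\alpha^{(k)}}$. For the image vertex $v=\sigma(\alpha^{(k)})=P_\sigma\alpha^{(k)}$, the incident facets are the $\sigma$-images of those through $\alpha^{(k)}$, hence $A_v = A_k P_\sigma^{-1}$, and therefore $\invtranspose{A_v} = \invtranspose{A_k}\,P_\sigma^{-1}$, so that $\gamma^v = \invtranspose{A_v}\,c = \invtranspose{A_k}(P_\sigma^{-1} c)$. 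Proposition~\ref{prop:SIM-smooth} yields $|\det A_v|=|\det A_k|=1$, eliminating the determinant factor in Lawrence's formula. The inner product transforms as $\langle c,v\rangle = \langle P_\sigma^{-1}c,\alpha^{(k)}\rangle$.

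After these substitutions, the contribution of the vertex $v = \sigma(\alpha^{(k)})$ becomes
\[
\frac{\langle P_\sigma^{-1}c,\,\alpha^{(k)}\rangle^n}{\prod_{i=1}^n \bigl(\invtranspose{A_k}\,P_\sigma^{-1} c\bigr)_i}\enspace,
\]
which has the shape of the summand in \eqref{eq:SIM-lawrence} upon renaming $P_\sigma^{-1}c$ as $\sigma(c)$ (and reindexing via $\sigma\mapsto\sigma^{-1}$ in the outer sum, which is a bijection of $\Sym(n)$). The last point is the orbit counting: for a proper SIM-body, the stabilizer of $\alpha^{(k)}$ in $\Sym(n)$ permutes only the trailing $n-k$ zero coordinates and thus has order $(n-k)!$, so as $\sigma$ ranges over all of $\Sym(n)$ each vertex of the $k$-th orbit is counted $(n-k)!$ times. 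Dividing by $(n-k)!$ gives the stated formula; the degenerate case follows by continuity as in Proposition~\ref{prop:SIM-vertices}.

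The proof should be essentially a bookkeeping argument; the only part that demands some care is keeping the conventions for the $\Sym(n)$-action on coordinates, on facet normals, and on the dual variable $c$ consistent, so that the substitution $P_\sigma^{-1}c \rightsquigarrow \sigma(c)$ and the reindexing of the outer sum produce exactly the combinatorics displayed in \eqref{eq:SIM-lawrence}. Genericity of $c$ (nondegeneracy of all $\gamma^v$) is taken as a hypothesis, so no further argument is needed there.
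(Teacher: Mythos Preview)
Your proposal is correct and follows essentially the same route as the paper's proof: apply Lawrence's formula, drop the origin, identify $A_v=A_kP_\sigma^{-1}$ (hence $|\det A_v|=1$ by the Delzant property), rewrite $\langle c,v\rangle=\langle P_\sigma^{-1}c,\alpha^{(k)}\rangle$, account for the $(n-k)!$-fold overcount from the stabilizer of $\alpha^{(k)}$, reindex $\sigma\mapsto\sigma^{-1}$, and pass to the degenerate parameters by continuity. Your write-up is in fact slightly more explicit than the paper's (e.g.\ spelling out $\invtranspose{A_v}=\invtranspose{A_k}P_\sigma^{-1}$), but there is no substantive difference in approach.
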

\begin{proof}
  Again it suffices to consider the generic case $\alpha_1 > \dots > \alpha_n > 0$, from which the degenerate cases follow by continuity.
  Let $P_{\sigma}$ be the permutation matrix corresponding to $\sigma$, i.e., $v=P_{\sigma}\alpha^{(k)}$.
  Then we obtain $A_v = A_k P_{\sigma}^{-1}$ and $b_v = b_{\alpha^{(k)}}$.
  In \eqref{eq:SIM-lawrence} we can leave out the origin, which is a vertex, since it does not contribute to the final volume.
  The factor $(n-k)!$ in the denominator comes from the fact, that all permutations which permute the $0$-entries of $\alpha^{(k)}$ count the same vertex.
  Moreover, the scalar product in the numerator satisfies $\langle c, P_{\sigma} \alpha^{(k)} \rangle=\langle P_{\sigma}^{-1} c, \alpha^{(k)} \rangle$, and $P_\sigma^{-1}c = \sigma^{-1}(c)$.
  Finally, notice that iterating over all permutations $\sigma$ or their inverses is the same.
\end{proof}
This leads us to identifying those linear objective functions which are generic.
\begin{lemma}\label{lem:c-generic}
  An objective function $c$ is generic for $\Lambda$ if and only if the coefficients of $c$ are nonzero and pairwise distinct.
  In particular, $c=(1,2,\dots,n)$ is generic.
\end{lemma}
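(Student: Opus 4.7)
My strategy is to analyze the two conditions of genericity—being nonconstant on edges and producing a nonvanishing $\gamma^v$ at every vertex—via the explicit block structure of the vertex-matrices $A_v$ and the edge descriptions available from Theorem~\ref{thm:generalized}.

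First, I would parametrize the vertex-matrix $A_v$. By Proposition~\ref{prop:SIM-vertices}, every vertex has the form $v = P_\sigma \alpha^{(k)}$ for some $k \in \{0, 1, \dots, n\}$ and $\sigma \in \Sym(n)$, and the proof of Lemma~\ref{lem:SIM-lawrence} gives $A_v = A_k P_\sigma^{-1}$ (with $A_0 = -I_n$ at the origin). Consequently,
\[
  \invtranspose{A_v}\, c \ = \ \invtranspose{A_k}\, P_\sigma^{-1} c \enspace,
\]
and as $(k,\sigma)$ ranges over all vertices, $P_\sigma^{-1} c$ ranges over every permutation of $c$. Thus condition (2) of genericity becomes: $\invtranspose{A_k}\, c'$ has all nonzero entries for every $k$ and every permutation $c'$ of $c$.

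Second, I would compute $\invtranspose{A_k}$ from the block decomposition. Since $C_k$ is lower triangular with ones on and below the diagonal, $C_k^{-1}$ is lower bidiagonal with diagonal $1$ and subdiagonal $-1$; transposing and combining with the $-I_{n-k}$ block,
\[
  \invtranspose{A_k}\, c' \ = \ \bigl(c'_1 - c'_2,\ c'_2 - c'_3,\ \dots,\ c'_{k-1} - c'_k,\ c'_k,\ -c'_{k+1},\ \dots,\ -c'_n\bigr) \enspace.
\]
All entries of this vector are nonzero iff $c'_i \neq c'_{i+1}$ for $i < k$ and $c'_j \neq 0$ for $j \geq k$. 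Imposing this for every $k$ and every permutation $c'$ of $c$ is equivalent to: the coordinates of $c$ are nonzero and pairwise distinct. Indeed, taking $k=1$ forces all coordinates to be nonzero (any $c_j$ can be placed at any position $\geq 1$), and taking $k=n$ with an adjacent transposition forces $c_i \neq c_j$ for every $i \neq j$; conversely these two conditions clearly suffice.

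Third, I would verify that condition (1)—not being constant on any edge—is also equivalent to the same statement, so that nothing is missed. By Theorem~\ref{thm:generalized} the homogeneous SIM-body $\Gamma$ has edges parallel to $e_i - e_j$ with $i,j \in [n+1]$; projecting out the last coordinate, every edge of $\Lambda$ is parallel to some standard basis vector $e_i$ (when $j=n+1$) or to some difference $e_i - e_j$ with $i,j \in [n]$. For $c$ with nonzero pairwise distinct entries, $\langle c, d \rangle \neq 0$ on each such edge direction $d$, so condition~(1) is automatic. Together with the previous step this proves the equivalence. Finally, the vector $c = (1, 2, \dots, n)$ trivially has nonzero pairwise distinct entries, hence is generic.

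The main obstacle is bookkeeping: explicitly inverting and transposing $A_k$ and checking that ranging over $\sigma$ in $P_\sigma^{-1} c$ produces every permutation of $c$. Once this is in place, the two genericity conditions collapse to the single clean criterion.
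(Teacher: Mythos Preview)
Your proof is correct and uses the same ingredients as the paper's---the edge directions from Theorem~\ref{thm:generalized} and the explicit block form of $\invtranspose{A_k}$---only in the reverse order: you derive the characterization from the $\gamma^v$-condition and then check the edge condition, whereas the paper derives it from the edge condition and then checks $\gamma^v$. Both arguments are short and equivalent in content; your more explicit computation of $\invtranspose{A_k}\,c'$ is in fact exactly what the paper displays immediately after the lemma.
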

\begin{proof}
  The edge directions of $\Lambda$ are $e_i-e_j$ for $i\neq j$ or $e_i$.
  Consequently, $c$ is not constant on any edge of $\Lambda$ if and only if $c_i\neq c_j$ for $i\neq j$ and $c_i\neq 0$.
  In that case, a routine computation yields $(\invtranspose{A_k} \sigma(c)) _i \neq 0$ for all $i$, $k$ and $\sigma$.
\end{proof}
While the volume formula \eqref{eq:SIM-lawrence} is fairly compact, it can be rewritten to improve the efficiency of evaluation.
Our goal is to get rid of the factor $(n-k)!$ in the denominator by enumerating each vertex only once.
To this end we gather some useful observations.
First, the objective function $c=(1,2,\dots,n)$ does not depend on the parameters $\alpha_i$.
Second, we compute
\[
\invtranspose{A_k} = \left[ \begin{array}{c|c}
     \invtranspose{C_k} & 0 \\ \hline
     0 & -I_{n-k}
\end{array}\right] \,, \qquad
\invtranspose{C_k} = \left[ \begin{array}{ccccc}
     1 & -1 & & & 0 \\
      & 1 & -1 & & \\
      & & \ddots & \ddots & \\
      & & & 1 & -1 \\
     0 & & & & 1
\end{array}\right] \enspace .
\]
We fix some $k \in [n]$.
Recall from the proof of Lemma~\ref{lem:SIM-lawrence} permuting the vector $\alpha^{(k)}$ gives the same as permuting the objective function $c$ by the inverse permutation.
Then two permutations $\sigma_1, \sigma_2$ correspond to the same vertex if and only if the first $k$ coordinates of $\sigma_1^{-1}(c)$ and $\sigma_2^{-1}(c)$ are the same.
This allows us to first choose $k$ entries of $c=(1,2,\dots,n)$ and then go through all $k!$ permutations of those vectors.
We arrive at the following.
\begin{proposition}\label{prop:SIM-lawrence}
  For $\alpha_1 \geq \dots \geq \alpha_n \geq 0$ the volume of $\Lambda=\Lambda(\alpha_1,\dots,\alpha_n)$ equals
  \[
    \vol_n(\Lambda) \ = \ \frac{1}{n!} \sum_{k \in [n]} \sum_{L \in \binom{[n]}{k}} \sum_{\sigma \in \Sym(k)} \frac{{\langle \sigma(L), \alpha^{[k]} \rangle}^n}{\sigma(L)_k \cdot \prod_{i\in [k-1]} \left( \sigma(L)_{i} - \sigma(L)_{i+1} \right) \cdot  \prod_{i \notin L} (-i)} \enspace,
  \]
  where $\alpha^{[k]} = (\alpha_1, \dots, \alpha_k)$, and we sort $L \in \binom{[n]}{k}$ ascendingly to obtain a vector in $\RR^k$.
\end{proposition}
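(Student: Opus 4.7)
The plan is to start from the Lawrence-type formula in Lemma~\ref{lem:SIM-lawrence}, specialize to the objective function $c=(1,2,\dots,n)$ (which is generic by Lemma~\ref{lem:c-generic}), and rewrite the sum over $\Sym(n)$ as a sum over $L\in\binom{[n]}{k}$ together with a sum over $\Sym(k)$, thereby eliminating the factor $(n-k)!$ in the denominator.

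First, I would exploit the explicit block form of $\invtranspose{A_k}$ displayed just before the proposition. For any vector $w\in\RR^n$, this gives $(\invtranspose{A_k}w)_i = w_i-w_{i+1}$ for $i<k$, $(\invtranspose{A_k}w)_k = w_k$, and $(\invtranspose{A_k}w)_i = -w_i$ for $i>k$. Applying this to $w=\sigma(c)$, the product $\prod_{i=1}^n(\invtranspose{A_k}\sigma(c))_i$ splits as
\[
  \Biggl(\prod_{i=1}^{k-1}\bigl(\sigma(c)_i - \sigma(c)_{i+1}\bigr)\Biggr)\cdot \sigma(c)_k \cdot \prod_{i=k+1}^{n}\bigl(-\sigma(c)_i\bigr).
\]

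Next I would observe that $\alpha^{(k)}$ has zero entries in positions $k+1,\dots,n$, so both the numerator $\langle \sigma(c),\alpha^{(k)}\rangle^n$ and the first two factors of the denominator depend only on the tuple $(\sigma(c)_1,\dots,\sigma(c)_k)$. Parameterizing this tuple, let $L\subseteq[n]$ be the set of values appearing in the first $k$ positions (identified with its ascending vector in $\RR^k$) and let $\tau\in\Sym(k)$ encode their order, so that $(\sigma(c)_1,\dots,\sigma(c)_k)=\tau(L)$. The remaining factor $\prod_{i>k}(-\sigma(c)_i)$ equals $\prod_{j\notin L}(-j)$, independently of how $\sigma$ orders $L^c$ in the last $n-k$ slots.

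Finally I would regroup: for each fixed pair $(L,\tau)$ there are precisely $(n-k)!$ permutations $\sigma\in\Sym(n)$ restricting to this data, namely one for each ordering of $L^c$ in positions $k+1,\dots,n$. Summing over $\sigma$ therefore contributes an overall factor $(n-k)!$, which exactly cancels the $(n-k)!$ in the denominator of \eqref{eq:SIM-lawrence}. Substituting the simplified factors yields the stated formula. The only place requiring care is bookkeeping the sign and index conventions in the two block contributions of $\invtranspose{A_k}$; once these are pinned down, the reorganization of the sum is purely combinatorial.
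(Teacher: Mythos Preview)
Your argument is correct and follows essentially the same approach as the paper: specialize Lemma~\ref{lem:SIM-lawrence} to $c=(1,\dots,n)$, use the explicit block form of $\invtranspose{A_k}$ to split the denominator, and regroup the sum over $\Sym(n)$ by the ordered $k$-tuple $(\sigma(c)_1,\dots,\sigma(c)_k)$ so that the $(n-k)!$ orderings of the last block cancel the factor $(n-k)!$. Your write-up is in fact more explicit than the paper's own discussion preceding the proposition.
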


\subsection*{Lorentzian polynomials}
In this section we will exhibit yet another representation of the SIM-bodies which is similar to \eqref{eq:sum}, but here we obtain a proper Minkowski sum decomposition.
This will reveal interesting structural properties of the volume polynomials.
To this and we set $\beta_k := \alpha_k - \alpha_{k+1}$ and $\alpha_{n+1} := 0$.
As the parameters $\alpha_i$ are strictly decreasing, the new parameters $\beta_i$ are strictly positive.
\begin{proposition}\label{prop:SIM_Minkowski}
  We have
  \begin{equation}\label{eq:lorentzian}
    \Lambda(\alpha_1, \dots, \alpha_n) \ = \ \sum_{i = 1}^n \beta_i \, Q(i,n) \enspace ,
  \end{equation}
  where
  \[
    Q(k,n) \ := \ \BiggSetOf{x \in [0,1]^n}{\sum_{i = 1}^n x_i \leq k} \enspace .
  \]
\end{proposition}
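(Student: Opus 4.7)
\medskip

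\noindent\textbf{Proof plan.} The plan is to verify the Minkowski sum identity~\eqref{eq:lorentzian} by comparing support functions. Recall that the support function $h_P(c) = \max_{x \in P} \langle c, x \rangle$ is additive under Minkowski sums, and two compact convex sets are equal if and only if their support functions coincide on $\RR^n$. So it suffices to establish
\[
  h_\Lambda(c) \ = \ \sum_{i=1}^n \beta_i \, h_{Q(i,n)}(c) \quad \text{for every } c \in \RR^n \enspace .
\]
For convenience we write $c_{+,(1)} \geq c_{+,(2)} \geq \dots \geq c_{+,(n)} \geq 0$ for the weakly descending rearrangement of the vector of positive parts $\max(c_j, 0)$.

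First I would compute the left hand side using Proposition~\ref{prop:SIM-vertices}: the vertices of $\Lambda(\alpha_1,\dots,\alpha_n)$ are the coordinate permutations of $(\alpha_1,\dots,\alpha_k,0,\dots,0)$ for $k\in\{0,1,\dots,n\}$. For a fixed $k$, the rearrangement inequality (applied to the weakly descending sequence $\alpha_1 \geq \dots \geq \alpha_k \geq 0$) yields that the maximum of $\langle c, v\rangle$ over such vertices is $\sum_{j=1}^{k} c_{(j)}\alpha_j$, where $c_{(j)}$ is the $j$-th largest entry of $c$. Then maximizing over $k$, and using $\alpha_j \geq 0$, it is optimal to stop including terms once $c_{(j)} < 0$. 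Hence
\[
  h_\Lambda(c) \ = \ \sum_{j=1}^{n} c_{+,(j)}\,\alpha_j \enspace .
\]

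Second I would compute $h_{Q(i,n)}(c)$ directly from the constraints $x\in[0,1]^n$ and $\sum x_j \leq i$. Again setting $x_j = 0$ whenever $c_j \leq 0$ and otherwise $x_j = 1$ for the indices of the $i$ largest positive entries (fewer if there are fewer positive entries), one obtains
\[
  h_{Q(i,n)}(c) \ = \ \sum_{j=1}^{i} c_{+,(j)} \enspace ,
\]
with the convention $c_{+,(j)}=0$ for $j$ exceeding the number of positive coordinates of $c$.

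Finally, I would combine these two formulas and swap the order of summation. The key telescoping identity is $\sum_{i=j}^{n} \beta_i = \alpha_j - \alpha_{n+1} = \alpha_j$, which gives
\[
  \sum_{i=1}^{n} \beta_i \, h_{Q(i,n)}(c) \ = \ \sum_{i=1}^{n} \beta_i \sum_{j=1}^{i} c_{+,(j)} \ = \ \sum_{j=1}^{n} c_{+,(j)} \sum_{i=j}^{n} \beta_i \ = \ \sum_{j=1}^{n} \alpha_j \, c_{+,(j)} \ = \ h_\Lambda(c) \enspace .
\]
The only place requiring care is the first step, namely identifying the optimal $k$ in the vertex maximization when $c$ has mixed signs; everything else is a routine manipulation. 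No obstacle of substance is expected, since Proposition~\ref{prop:SIM-vertices} already isolates the vertex structure of $\Lambda$ that drives the argument.
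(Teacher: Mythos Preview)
Your proof is correct and takes a genuinely different route from the paper's. The paper argues by double inclusion: for the containment $\sum_i \beta_i\,Q(i,n) \subseteq \Lambda$ it picks an arbitrary point in the Minkowski sum and verifies the defining inequalities of $\Lambda$ directly; for the reverse containment it writes a point of $\Lambda$ as a convex combination of the vertices from Proposition~\ref{prop:SIM-vertices} and then performs a somewhat intricate rearrangement of the resulting double sum (telescoping $\alpha_i = \sum_{j\geq i}\beta_j$) to exhibit that point as a sum of points in the scaled $Q(i,n)$'s. Your approach instead reduces the identity to equality of support functions, computes $h_\Lambda(c)=\sum_j c_{+,(j)}\alpha_j$ from the vertex description and $h_{Q(i,n)}(c)=\sum_{j\leq i} c_{+,(j)}$ from an easy LP argument, and then finishes with the same telescoping $\sum_{i\geq j}\beta_i=\alpha_j$. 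This is both shorter and more conceptual: additivity of support functions is the canonical tool for Minkowski-sum identities, and the rearrangement-inequality step makes the role of the vertex structure transparent. The paper's argument, by contrast, is more hands-on and self-contained---it avoids invoking support functions or the rearrangement inequality---but at the cost of a longer manipulation when decomposing a generic point. Either way, the essential algebraic content is the single identity $\alpha_j=\sum_{i=j}^n\beta_i$.
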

\begin{proof}
  We will first check the inclusion from right to left.
  Let $x = \sum_{k=1}^{n} q^{(k)}$ with $q^{(k)} \in \beta_k \, Q(k,n)$.
  For $j,k \in [n]$ we have $q^{(k)}_j \leq \beta_k$ and $\sum_{i=1}^{n} q^{(k)}_i \leq k \beta_k$.
  Thus each set $J \subseteq [n]$ satisfies $\sum_{j \in J} q^{(k)}_j \leq \beta_k \cdot \min\{k, \card{J} \}$.
  We get
  \[
    \begin{aligned}
      \sum_{j \in J} x_j \ = \ \sum_{j \in J} \sum_{k=1}^{n} q^{(k)}_j \ &\leq \ \sum_{k=1}^{n} \beta_k \cdot \min\{k, \card{J} \} \\
      & = \ \sum_{k=1}^{n} (\alpha_{k} - \alpha_{k+1}) \cdot \min\{k, \card{J} \} \ = \ \sum_{k=1}^{\card{J}} \alpha_{k} \enspace ,
    \end{aligned}
  \]
  and thus $x \in \Lambda$.
  
  For the reverse inclusion consider an arbitrary point $x \in \Lambda$.
  This can be written as a convex combination of the vertices.
  As the origin is a vertex of $\Lambda$, we obtain a function $\gamma : [n] \times S_n \rightarrow [0,1]$ with $\sum_{k=1}^n \sum_{\sigma \in S_n} \gamma(k,\sigma) \leq 1$  such that
  \[
    x \ = \ \sum_{k=1}^n \sum_{\sigma \in S_n} \gamma(k,\sigma) \sigma \left( \sum_{i=1}^k \alpha_i e_i\right) \enspace .
  \]
  Note that $\gamma$ is not unique, since $x$ can be written as a convex combination in more than one way.
  Permutations are linear, and so we get:
  \[
    \begin{aligned}
    x \ &= \ \sum_{\sigma \in S_n}  \sum_{k=1}^n \sum_{i=1}^k \gamma(k,\sigma) \alpha_i \sigma(e_i)
        \ = \ \sum_{\sigma \in S_n} \sum_{k=1}^n \sum_{i=1}^k \gamma(k,\sigma) \left( \sum_{j=i}^{n} \alpha_j - \alpha_{j+1}  \right) \sigma(e_i) \\
      &	= \ \sum_{\sigma \in S_n} \sum_{j=1}^n ( \alpha_j - \alpha_{j+1} )  \sum_{i=1}^j \sum_{k=i}^n \gamma(k,\sigma) \sigma(e_i) \\ 
      & = \ \sum_{j=1}^n \beta_j \left[ \sum_{\sigma \in S_n} \sum_{k=1}^j \gamma(k, \sigma) \sigma \left( \sum_{i=1}^k e_i \right) + \sum_{\sigma \in S_n} \sum_{k=j+1}^n \gamma(k, \sigma) \sigma \left( \sum_{i=1}^j e_i \right) \right] \enspace .
    \end{aligned}
    \]
  Since $\sum_{k=1}^n \sum_{\sigma \in S_n} \gamma(k,\sigma) \leq 1$, the vector enclosed in brackets $[...]$ is contained in the polytope $\conv \{ \bigcup_{k=1}^j \cP (\sum_{i=1}^k e_i ) \cup \{\0 \} \} = Q(j, n)$, which finishes the proof.
\end{proof}
Following Brändén and Huh \cite{Lorentzian} a homogeneous real polynomial with positive coefficients is \emph{Lorentzian} if all its iterated partial derivatives all the way down to degree two yield quadratic forms of Lorentzian signature $({-}{+}{+}\cdots{+})$.
It turns out that the volume polynomials of the SIM-bodies belong to this class, up to a linear substitution.
\begin{theorem}
  The polynomial
  \[
    \lambda(\beta_1, \dots, \beta_n) \ = \ \vol_n\Big( \Lambda\left(\,\beta_1+\dots+\beta_n,\, \beta_2+\dots+\beta_n,\, \dots,\, \beta_n\,\right) \Big)
  \]
  is Lorentzian.
\end{theorem}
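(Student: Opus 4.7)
The plan is to combine the Minkowski decomposition provided by Proposition~\ref{prop:SIM_Minkowski} with the theory of Lorentzian polynomials from Brändén and Huh~\cite{Lorentzian}.

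First, I would verify that the substitution appearing in the theorem matches the change of variables used just before Proposition~\ref{prop:SIM_Minkowski}: setting $\alpha_k := \beta_k + \beta_{k+1} + \dots + \beta_n$ (with $\alpha_{n+1} := 0$) yields exactly $\alpha_k - \alpha_{k+1} = \beta_k$ for $k\in[n]$. Proposition~\ref{prop:SIM_Minkowski} then rewrites
\[
  \lambda(\beta_1, \dots, \beta_n) \ = \ \vol_n\Biggl(\sum_{k=1}^n \beta_k\, Q(k,n)\Biggr),
\]
exhibiting $\lambda$ as the multivariate volume polynomial of a genuine (nonnegatively weighted) Minkowski combination of the polytopes $Q(k,n)$. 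In particular $\lambda$ is a homogeneous polynomial of degree~$n$ in $\beta_1, \dots, \beta_n$ whose coefficients are (multiples of) mixed volumes, hence nonnegative.

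Next, I would identify each $Q(k,n)$ as a polymatroid. Its defining inequalities $0 \leq x_i \leq 1$ and $\sum_i x_i \leq k$ coincide with those of the independence polytope of the uniform matroid $U_{k,n}$, whose rank function $r(S) = \min(\card{S}, k)$ is submodular. In particular, each $Q(k,n)$ is a generalized permutahedron (this is also implicit in Proposition~\ref{prop:sum}).

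Finally, I would invoke the result of Brändén and Huh~\cite{Lorentzian} that the multivariate volume polynomial $(t_1, \dots, t_m) \mapsto \vol_n\bigl(\sum_{i=1}^m t_i P_i\bigr)$ attached to a family of polymatroids $P_1,\dots,P_m$ in $\RR^n$ is Lorentzian. Specialized to $P_k = Q(k,n)$ and $t_k = \beta_k$, this yields the theorem. The main point requiring care is to align our formulation with the precise statement in \cite{Lorentzian}: their framework equates Lorentzian polynomials (with nonnegative coefficients) to volume polynomials arising from M-convex structures such as polymatroids, and this class is stable under the nonnegative Minkowski combinations we form. Once the Minkowski presentation of $\Lambda$ is in hand, the rest is a direct citation.
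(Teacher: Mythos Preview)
Your proposal is correct and follows essentially the same route as the paper: apply Proposition~\ref{prop:SIM_Minkowski} to write $\lambda$ as the volume polynomial of the Minkowski combination $\sum_k \beta_k\, Q(k,n)$, then cite Br\"and\'en--Huh. The only difference is that your intermediate identification of the $Q(k,n)$ as polymatroids is not needed---the paper invokes \cite[Theorem~4.1]{Lorentzian}, which applies to arbitrary convex bodies, so the Lorentzian conclusion follows immediately from the Minkowski decomposition without any M-convexity check.
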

\begin{proof}
  Recall that $\alpha_1\geq\dots\geq\alpha_n\geq 0$ if and only if $\beta_1,\dots,\beta_n\geq 0$. 
  Proposition \ref{prop:SIM_Minkowski} shows that $\Lambda\left(\sum_{i=1}^n \beta_i, \sum_{i=2}^n \beta_i, \dots, \beta_n\right) = \sum_{i=1}^n \beta_i Q(i,n)$, and the claim follows from \cite[Theorem 4.1]{Lorentzian}.
\end{proof}

\begin{example}
  The normalized volume of a 3-dimensional SIM-body reads
  \[
    \begin{aligned}
      \vol_3(&\Lambda(\alpha_1,\alpha_2,\alpha_3)) \ = \\
      &\alpha_1^3 + 3 \alpha_1^2 \alpha_2 + 3 \alpha_1^2 \alpha_3 + 3 \alpha_1 \alpha_2^2 + 6 \alpha_1 \alpha_2 \alpha_3 - 6 \alpha_1 \alpha_3^2  - 2 \alpha_2^3 - 6 \alpha_2^2 \alpha_3  + 3 \alpha_2 \alpha_3^2 + \alpha_3^3 \enspace .
    \end{aligned}
  \]
  Substituting $\beta_3=\alpha_3$, $\beta_2=\alpha_2-\beta_3$, $\beta_1 = \alpha_1 - \beta_2 - \beta_3$ results in
  \[
    \begin{aligned}
      \lambda(&\beta_1, \beta_2, \beta_3) \ = \\
      &\beta_1^3 + 6 \beta_1^2 \beta_2 + 9 \beta_1^2 \beta_3 + 12 \beta_1 \beta_2^2 + 36 \beta_1 \beta_2 \beta_3 + 18 \beta_1 \beta_3^2 + 5 \beta_2^3 + 18 \beta_2^2 \beta_3 + 18 \beta_2 \beta_3^2 + 6 \beta_3^3  \enspace .
    \end{aligned}
  \]
\end{example}

\section{Computing the SJA-Prices}
\label{sec:computing}

Giannakopoulos and Koutsoupias listed the SJA-prices for up to six items \cite[p.~146]{GK+duality:2018}.
Here we extend that computation to $n\leq 12$ items using \polymake \cite{DMV:polymake} and \HC \cite{HC}, and we also compute the resulting revenues.
While some of our computations employ numerical methods, our results for $n\leq 10$ are fully certified and thus correct, within their margins of error, which are also given explicitly.
We employ the volume computations for SIM-bodies as laid out in Section~\ref{sec:volume}.

Consider the SJA of $n$ items, and $p_k^{(n)}$ is the price at which any bundle of cardinality $k$ is sold, where $k\in[n]$.
When $n$ is clear from the context, we abbreviate $p_k=p_k^{(n)}$.
By Theorem \ref{thm:uniqueCritPrice} and Lemma \ref{lem:DEmptyisSIM}, the SJA-prices satisfy the critical conditions
\begin{equation}\label{eq:SJA-polynomials}
  1 - \frac{k}{n+1} \ = \ \vol_k \Lambda(p_1, p_2 - p_1, \dots, p_k - p_{k-1}) \quad \text{for } k\in[n] \enspace ,
\end{equation}
and this is a system of $n$ rational polynomial equations in the $n$ prices.
We abbreviate the volume polynomial as $v_k:=\vol_k \Lambda(\alpha_1,\dots,\alpha_n)$.
Notice that the price polynomial $v_k(p_1, p_2 - p_1, \dots, p_k - p_{k-1})$ does not depend on $n$; yet the constant term $1 - \frac{k}{n+1}$ does.
\begin{example}\label{exmp:SJA-polynomials}
  For $n=3$ the polynomial equations \eqref{eq:SJA-polynomials} read
  \begin{align*}
    1-\frac{1}{4} \ =& \  p_1 \\
    1-\frac{2}{4} \ =& \ - p_1^2 + 2 p_1 p_2 - \frac{1}{2} p_2^2 \\
    1-\frac{3}{4} \ =& \ \frac{1}{2} p_1^3 - \frac{3}{2} p_1^2 p_3 - 3 p_1 p_2^2 + 6 p_1p_2p_3 - \frac{3}{2} p_1 p_3^2 + p_2^3 - \frac{3}{2} p_2^2 p_3 + \frac{1}{6} p_3^3 \enspace .
  \end{align*}
\end{example}

The first step is to compute the (coefficients) of the volume polynomials $v_k$.
Here we followed two approaches.
First, we computed the volume polynomial of a proper SIM-body in \polymake via Proposition~\ref{prop:SIM-lawrence}.
Then we applied the linear substitution as in \eqref{eq:SJA-polynomials} to obtain the price polynomials.
Interestingly, because of cancellation effects caused by the substitution, the support of the price polynomials seems to be considerably smaller than the support of the volume polynomials.
Therefore, we also computed the price polynomials directly, by exploiting \cite[Equation~(12)]{GK+duality:2018} and implementing the integration of polynomials in a way tailored to SJA.
That second method is superior for large $n$.

The SJA-prices arise as solutions to the system \eqref{eq:SJA-polynomials} of price polynomials, but they are constrained by the submodularity condition \eqref{eq:submodular}.
Here it reads
\begin{equation}\label{eq:SJA-submodular}
  p_{k} - p_{k-1} \ \leq \ p_{k-1} - p_{k-2} \quad \text{for } k\in[n] \enspace,
\end{equation}
and we use the convention $p_0=0$ and $p_{-1}=-1$.
It follows from Theorem~\ref{thm:uniqueCritPrice} that the polynomial system \eqref{eq:SJA-polynomials} has at most one real solution which satisfies \eqref{eq:SJA-submodular}; see also \cite[Lemma 6.2]{GK+duality:2018}.
Note that \eqref{eq:SJA-polynomials} is valid only for weakly increasing prices, i.e., $p_1\leq p_2 \leq \dots \leq p_n$.
However if during computation we get a price $p_{k} < p_{k-1}$, we can accommodate for it by normalizing the prices on the fly and setting $p_{k-1} = p_k$.
This way, equation \eqref{eq:SJA-polynomials} reads $v_k(p_1,p_2-p_1, \dots, p_k - p_{k-2},0) = 1-\tfrac{k}{n+1}$.
This step may be necessary multiple times.
Combining \eqref{eq:SJA-polynomials} with \eqref{eq:SJA-submodular} then yields the Algorithm \ref{algo:SJA-prices} for computing the SJA-prices; its correctness follows from Theorem~\ref{thm:uniqueCritPrice}.

\begin{algorithm}[th]
  \dontprintsemicolon
  \Input{integer $n\geq 1$}
  \Output{SJA-prices $p= (p_1, \dots, p_n)$ or $\emptyset$, if $\cI_{n-1,n}$ is not solvable}
  \For{$k \in [n]$}{
    $i \leftarrow 1$ \;
    \While{$i < k$}{
      $w(x)$ $\leftarrow$ $v_k(p_1, p_2 - p_1 , \dots, p_{k-i} - p_{k-i-1}, x - p_{k-i}, 0, \dots, 0)$ \;
      $\xi$ $\leftarrow$ solution to $w(x) = 1 - \frac{k}{n+1}$ with $ x - p_{k-1} \leq p_{k-1} - p_{k-2}$ \;
      \lIf{$\xi$ does not exist}{\Return $\emptyset$}
      \If{$\xi\geq p_{k-1}$}{$p_k$ $\leftarrow$ $\xi$ \\ \Break}
      $i$ $\leftarrow$ $i+1$ \;
    }
    \For{$j \in [i-1]$}{
      $p_{k-j}$ $\leftarrow$ $p_k$ \;
    }
  }
  \Return $p= (p_1, \dots, p_n)$ \;

  \caption{Finding the SJA price schedule}
  \label{algo:SJA-prices}
\end{algorithm}


The Algorithm \ref{algo:SJA-prices} cannot be applied naively.
It rather requires an extra layer of considerations which we discuss now.
The nontrivial steps of the procedure is finding the real zeros of a polynomial with real coefficients, which is, of course, classical.
Yet Algorithm \ref{algo:SJA-prices} pretends that those computations are exact.
The multivariate volume polynomial of the SIM-bodies and its linear substitution, the volume polynomial $v_k$, have rational coefficients; so there is no issue with an exact representation.
Yet the univariate polynomial $w(x)=v_k(p_1, p_2 - p_1 , \dots, p_{k-i} - p_{k-i-1}, x - p_{k-i}, 0, \dots, 0)$ depends on the prices $p_1,\dots,p_{k-1}$, which are algebraic numbers.
A naive approach would be to apply numerical methods to find the first price, substitute and iterate the process.
However, the increasing degrees of the price polynomials amplify even a small initial error enormously.
Therefore, instead of this iterative approach we applied \HC to the entire polynomial system \eqref{eq:SJA-polynomials}, as this can return exact certificates \cite{BreidingRoseTimme:2011.05000}.
In this way we were able to compute numerical approximations with at least 15 exact decimal digits (precision at 53 bits) for all $n\leq 10$.
For $n=11$ the \HC computation died for lack of main memory, with more than 130~GB required.
We could apply the (nonexact) iterative method for $n=11$ and $n=12$.
Our results are compiled in Table~\ref{tab:prices}.


Finally, the expected revenue \eqref{eq:revenue} of the SJA reads
\begin{equation}\label{eq:SJA-revenue}
  \Rev(p) \ = \ \sum_{k\in [n]} p_k \cdot \binom{n}{k} \cdot \vol_n\left(D_{[k]}\right) \enspace .
\end{equation}
To compute the volume of the region $D_{[k]}$ recall from Lemma \ref{lem:SIM-product} that this is a product of SIM-bodies.
The computation of \eqref{eq:SJA-revenue} for $n \leq 12$ can be found in Table \ref{tab:revenues-exact}.

The computations were made on an Intel(R) Xeon(R) Processor E5-2630 v4 @ 2.20 GHz 
with openSUSE Leap 15.2, Linux 5.3.18-lp152.36 in about 30 minutes and are certified until $n=10$.
The memory requirement did not exceed 36~GB.

\begin{table}[th]
  \caption{SJA-prices for $n \leq 12$ items, where $p_k^{(n)}$ indicates the price of selling a bundle of $k$ items in an $n$ item setting, and \enquote{--} says that the corresponding bundle is not sold.
    Values for $n\leq 10$ certified by $\HC$ \cite{BreidingRoseTimme:2011.05000}.}
\label{tab:prices}
\renewcommand{\arraystretch}{0.9}
\small
\begin{tabular*}{\linewidth}{@{\extracolsep{\fill}}ccccccccccccc@{}}\toprule
  $n$ & $p^{(n)}_1 $ & $p^{(n)}_2 $ & $p^{(n)}_3 $ & $p^{(n)}_4 $ & $p^{(n)}_5 $ & $p^{(n)}_6 $ & $p^{(n)}_7 $ & $p^{(n)}_8 $ & $p^{(n)}_9 $ & $p^{(n)}_{10} $ & $p^{(n)}_{11} $ & $p^{(n)}_{12}$ \\ 
  \midrule
  $1$	& $1/2$ &&&&&&&&&&&\\
  $2$ 	& $2/3$ & 0.862 &&&&&&&&&&\\
  $3$	& $3/4$ & 1.146 & 1.226 &&&&&&&&&\\
  $4$	& $4/5$ & 1.317 & 1.581 & 1.601 &&&&&&&&\\
  $5$	& $5/6$ & 1.431 & 1.817 & -- & 1.986 &&&&&&&\\
  $6$	& $6/7$ & 1.512 & 1.986 & 2.286 & -- & 2.377 &&&&&&\\
  $7$	& $7/8$ & 1.573 & 2.113 & 2.500 & 2.739 & -- & 2.775 &&&&&\\
  $8$	& $8/9$ & 1.621 & 2.211 & 2.667 & 2.991 & -- & -- & 3.178 &&&&\\
  $9$	& $9/10$ & 1.659 & 2.290 & 2.800 & 3.192 & 3.466 & -- & -- & 3.584 &&&\\
  $10$	& $10/11$ & 1.690 & 2.355 & 2.909 & 3.356 & 3.696 & 3.932 & -- & -- & 3.995 &&\\
  $11$	& $11/12$ & 1.715 & 2.409 & 3.000 & 3.493 & 3.888 & 4.188 & 4.392 & -- & -- & 4.408 &\\
  $12$  & $12/13$ & 1.737 & 2.454 & 3.077 & 3.609 & 4.051 & 4.404 & 4.670 & 4.848 & 4.937 & -- & 5.013 \\
  \bottomrule
\end{tabular*}
\end{table}


\begin{table}[th]
\caption{Expected revenues $\Rev=\Rev(p_1^{(n)}, \dots, p_n^{(n)})$ of the SJA for $n = 1, \dots, 12$.}
\label{tab:revenues-exact}
\renewcommand{\arraystretch}{0.9}
\small
\begin{tabular*}{\linewidth}{@{\extracolsep{\fill}}ccccccccccccc@{}}\toprule
  $n$ & 1 & 2 & 3 & 4 & 5 & 6 & 7 & 8 & 9 & 10 & 11 & 12 \\
  \midrule
  $\Rev$ & 0.25 & 0.549 & 0.875 & 1.220 & 1.576 & 1.943 & 2.318 & 2.699 & 3.086 & 3.478 & 3.883 & 4.336 \\
  \bottomrule
\end{tabular*}
\end{table}

\section{Conclusion}

The revenue optimization of auctions with multiple items is notoriously difficult.
In the present paper this is reflected in metric intricacies concerning special classes of generalized permutahedra and ensuing algebraic questions.
We believe that answers to the following open questions will lead to a better understanding.

As already suggested by Giannakopoulos and Koutsoupias, the SJA seems to be the only immediate candidate for an optimal auction in the submodular and symmetric setting.
A major technical obstacle in any attempt to confirm or refute the corresponding \cite[Conjecture 4.3]{GK+duality:2018} seems to be the following.
\begin{question}
  How large is the gap of SJA for $n$ items?
\end{question}
From Table~\ref{tab:prices} we see that the gap equals two for $n=11$, and it is one for $n=12$.
So as a function of $n$ the gap is not monotone.

There may be more to say from the point of view of tropical geometry.
The support of the tropical polynomial $u$ in \eqref{eq:tropical_polynomial} is formed by the vertices of the unit cube.
The prices, which enter as the coefficients of $u$, give rise to a regular subdivision, $\cS(u)$, on $[0,1]^n$ which is dual to the tropical hypersurface $\cT(u)$; see \cite[\S1.2]{ETC}.
In this way, finding optimal prices becomes a nonlinear optimization problem over the secondary cone of $\cS(u)$.
\begin{question}
  What can be learned from this dual point of view?
  Does it help to understand submodular auctions which are degenerate?
\end{question}


\bibliographystyle{amsplain}
\bibliography{references.bib}

\end{document}